\documentclass[11pt]{amsart}

\usepackage{amscd,amsthm,amsfonts,amssymb,amsmath,esint}
\usepackage{mathrsfs}
\usepackage[colorlinks=true]{hyperref}
\usepackage[all]{xy}
\usepackage{CJK}
\usepackage{comment}

    \newcommand{\BA}{{\mathbb {A}}} 
    \newcommand{\BC}{{\mathbb {C}}} 
     \newcommand{\BF}{{\mathbb {F}}}
     \newcommand{\BH}{{\mathbb {H}}}

     \newcommand{\BP}{{\mathbb {P}}}
    \newcommand{\BQ}{{\mathbb {Q}}}

     \newcommand{\BZ}{{\mathbb {Z}}}

    \newcommand{\CE}{{\mathcal {E}}} 
     \newcommand{\CH}{{\mathcal {H}}}
     \newcommand{\CJ}{{\mathcal {J}}}

    \newcommand{\CO}{{\mathcal {O}}}

     \newcommand{\CX}{{\mathcal {X}}}
    \newcommand{\CY}{{\mathcal {Y}}}

    \newcommand{\fa}{{\mathfrak{a}}} 
     
     \newcommand{\ff}{{\mathfrak{f}}}
    \newcommand{\fg}{{\mathfrak{g}}}

    \newcommand{\fm}{{\mathfrak{m}}} 
     \newcommand{\fp}{{\mathfrak{p}}}
    \newcommand{\fq}{{\mathfrak{q}}}

    \newcommand{\fU}{{\mathfrak{U}}}

\newcommand{\ab}{{\mathrm{ab}}}                     

\newcommand{\Gal}{{\mathrm{Gal}}}                   
\newcommand{\GL}{{\mathrm{GL}}}                     
\newcommand{\Norm}{{\mathrm{Norm}}}                 
\newcommand{\Spec}{{\mathrm{Spec}\,}}               

\newcommand{\Spf}{{\mathrm{Spf}}}                   
\newcommand{\tor}{\mathrm{tor}}                     
\newcommand{\tr}{{\mathrm{tr}}}                     

\renewcommand{\mod}{\, \mathrm{mod}\, }

\font\cyr=wncyr10 \newcommand{\Sha}{\hbox{\cyr X}}

\newcommand{\sk}{\medskip}
\newcommand{\s}{\sk\noindent}

\newcommand{\ov}{\overline}

\newcommand{\wh}{\widehat}

\newcommand{\ra}{\rightarrow}                           

\newcommand{\pair}[1]{\left\langle {#1}\right \rangle}             
\newcommand{\lrb}[1]{\left(#1\right)}                   
\newcommand{\set}[1]{\left\{#1\right\}}                 

\newtheorem{thm}{Theorem}[section]

\newtheorem{theorem}[thm]{Theorem}

\newtheorem{corollary}[thm]{Corollary}

\newtheorem{lemma}[thm]{Lemma}

\newtheorem{prop}[thm]{Proposition}
\newtheorem{proposition}[thm]{Proposition}

\theoremstyle{definition}

\theoremstyle{remark}

\newtheorem{remark}[thm]{Remark}

\numberwithin{equation}{subsection}

\def\mat(#1,#2,#3,#4){\begin{pmatrix}#1 & #2 \\ #3 & #4\end{pmatrix}}
\newcommand{\matrixx}[4]{\begin{pmatrix}#1 & #2 \\ #3 & #4\end{pmatrix} }

\newcommand{\hilbert}[4]{\left(\frac{#1,#2}{#3}\right)_{#4}}

\begin{document}
\title{A proof of the $4,7$ cases of Sylvester's conjecture on cube sums}
\author{Hongbo Yin}

\begin{abstract}
In this paper, we prove that every prime $p$ which is congruent to $4,7$ modulo $9$ is the sum of two rational cubes. This is $2/3$ of Sylvester's conjecture which has a history of nearly 150 years since 1879. In the proof, we use recent progress on Full BSD conjecture of rank $0$ elliptic curves in \cite{BF} to deduce that the Manin-Stevens constants of some families of elliptic curves are units. We also use recent solutions of Unbounded Denominators Conjecture in \cite{CDT} to prove that some cubic roots of modular functions are invariant under some congruence subgroups. Instead of using the Unbounded Denominators Conjecuture, we also give another conditional proof assuming the GRH for number fields or Artin's primitive root conjecture for arithmetic progressions.
\end{abstract}

\address{School of Mathematics, Shandong University, Jinan, Shandong,  250100,
China}
\email{yhb@sdu.edu.cn}

\maketitle

\section{Introduction}

The Sylvester problem, i.e. which integer can be written as the sum of two rational cubes (we will call it cube sum for short) has been investigated intensively recently, see \cite{CST17}\cite{DV17}\cite{HSY19}\cite{SSY}\cite{Yin1}\cite{Yin}\cite{ABS}\cite{MS1}\\ \cite{MS} and many other papers on the arXiv for example. This problem has a long history from Euclid, through Diophantine, Fermat, Euler, Dirichlet, and continued to Sylvester (1879) and Selmer (1951) who studied it dramatically. For this interesting history, please see Reyna's wonderful blog \cite{Reyna}. From the modern point of view, this problem is closely related to the elliptic curves $y^2=x^3-432n^2$ \cite{DV1}. This is very similar to the famous congruent number problem which is closely related to the elliptic curves $y^2=x^3-n^2x$ \cite{tian}. These two families of elliptic curves are also the first examples studied by Birch and Swinnerton-Dyer for their famous conjecture. The classical and typical result in the cube sum problem is Sylvester's theorem that if $p\equiv 2,5\mod 9$, then $p$ is not a cube sum. Contrary to this, Birch and Stephan \cite{BS} proposed the conjecture that if $p\equiv 4,7,8\mod 9$, then $p$ should be a cube sum based on the computation of root numbers of the corresponding elliptic curves and the same story also holds for $p^2$. This conjecture is also indicated (but not written down explicitly) by Selmer in \cite{Selmer51}, where he proved the upper bound of the algebraic rank of the elliptic curves is one. However, this conjecture is usually called Sylvester's conjecture, maybe because people believe that Sylvester should have thought this conjecture by his result although he did not write it down. 

In 1993, Elkies claimed a proof of this conjecture for primes $p\equiv 4,7\mod 9$. But he had never published any details about his proof. He only very briefly sketched his principle of constructing the (mock) Heegner points in \cite{Elkies}. Nobody else knows any further information about his construction. So the only known result on this conjecture is some partial result in all cases, see \cite{DV17}\cite{Yin}. In this paper, inspired by Elkies's sketch, we give a detailed construction (but not the same as Elkies's construction) of CM points 
which give non-torsion rational points on the elliptic curves for $p\equiv 4,7\mod 9$ and prove the following theorem:
\begin{theorem}\label{main}
Let $p\equiv 4,7\mod 9$ be a prime, both $p$ and $p^2$ are sums of two rational cubes. 
\end{theorem}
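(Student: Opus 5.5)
Write $n\in\{p,p^2\}$. The integer $n$ is a sum of two rational cubes exactly when the elliptic curve $E_n: x^3+y^3=n$ has a non-torsion rational point. Equivalently, the curve $y^2=x^3-432n^2$ does; for $n$ cube-free and $n>2$ the torsion of this curve is trivial. So the plan is to produce a rational point of infinite order on $E_p$ and on $E_{p^2}$, using a Heegner/CM-point construction in the spirit of Elkies.

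\textbf{Step 1: choosing the auxiliary field.} Since $p\equiv 4,7 \bmod 9$ we have $p\equiv 1\bmod 3$, so $p=\pi\bar\pi$ splits in $K=\BQ(\sqrt{-3})$. I take the cube-root extension $K(\sqrt[3]{p})$, or equivalently the ring class field of $\BQ(\sqrt{-3p})$ or of a suitable order of conductor divisible by $3$. The condition $p\not\equiv 1\bmod 9$ governs the ramification at $3$. This is what should make a trace of CM points land in the $\chi$-isotypic part that corresponds to the twist $E_p$ or $E_{p^2}$, rather than in $E_1$.

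\textbf{Step 2: the modular parametrisation.} $E_1$ (conductor $27$) is covered by $X_0(27)$. The twists $E_p$ and $E_{p^2}$ are the twists of $E_1$ by the cubic characters of $K(\sqrt[3]{p})/K$. I would write explicit modular functions on a congruence subgroup, for instance eta quotients of level $27$ or $243$, whose cube roots give the coordinates $x,y$ of a parametrisation of $E_1$ twisted appropriately. Evaluating these at CM points $\tau$ of discriminant $-3p^2$ or $-27p^2$ produces points defined over ring class fields.

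This step is where I expect the main obstacle. One must show that the cube roots of the modular functions are themselves invariant under a congruence subgroup, so that Shimura reciprocity controls their Galois action. I would try to prove this first via bounded denominators: a modular function for a noncongruence subgroup with integral Fourier coefficients is congruence. Failing that, I would compute the Galois action directly.

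\textbf{Step 3: descending the point.} Using Shimura reciprocity, take a trace (or twisted trace) of the CM values down to $K$ and check that the resulting point $P$ lies in $E_n(\BQ)$. Its image in $E_1(\BQ)\otimes\chi$ is a point on the twist $E_n$.

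\textbf{Step 4: non-torsionness.} Show that $P$ is non-torsion by reducing modulo a prime above $3$ (or above $\pi$) and tracking a congruence. Alternatively, relate the height of $P$ to $L'(E_n,1)$ through a Gross–Zagier formula, which requires knowing that the Manin constant of the relevant parametrisation is a unit. I expect the cleanest route is a $3$-adic nonvanishing argument: show that $P$ is not divisible by $3$ modulo the torsion subgroup, comparing $P$ with the $3$-Selmer group, which has rank $1$ by Selmer's descent.
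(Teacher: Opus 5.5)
Your proposal has a genuine gap at Step 4, which is the entire difficulty: you never prove that the point has infinite order. Steps 1--3 are the standard mock-Heegner set-up: CM points by orders of $K$ of conductor divisible by $p$ on a $3$-power-level modular curve, followed by a twisted trace from a ring class field containing $K(\sqrt[3]{p})$. Producing a point of $E_n(\BQ)$ this way is routine; showing it is non-torsion is not, and none of the options you list does it.
\begin{itemize}
\item A Gross--Zagier formula only turns ``$P$ is non-torsion'' into ``$L'(E_n,1)\neq 0$''. That is not known independently, so this route is circular.
\item The $3$-isogeny Selmer route needs an actual computation of the local images of your traced point. Knowing that the Selmer rank is $1$ says nothing about whether this particular point has nonzero image. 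This is essentially the approach of Dasgupta--Voight \cite{DV17}, and it has only been carried out under the extra hypothesis that $3$ is not a cube modulo $p$. Nothing in your proposal removes that hypothesis, so at best you recover that partial result.
\item ``Reducing modulo a prime'' is not specified enough to evaluate.
\end{itemize}
The obstacle you single out in Step 2 is also misplaced. Twisting a parametrization of $E_1$ only rescales $x,y$ by constants such as $\sqrt[3]{p^2}$. The functions involved are already modular, and Shimura reciprocity applies to them directly. So constructing the point needs no cube roots of modular functions and no unbounded-denominators input.

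The paper takes no trace, and it uses cube roots of modular functions exactly for the non-torsion step.
\begin{itemize}
\item It parametrizes the CM curves $E_{\bar\varpi^i}$ and $E_{\varpi^i}$ over $K$ by $X_1(N)$, with $N=9p$ or $27p$, via the newform of their Hecke character. It evaluates at the single point $\tau_r=-1/(3(\omega+r))$.
\item Shimura reciprocity and Proposition \ref{LCF} give $\varphi(\tau_r)\in E_{\bar\varpi^i}(K(\sqrt[3]{\varpi}))$, with $\sigma_{\omega_\fp}$ acting by $[\omega^{\pm1}]$. Hence the twist $\phi(\varphi(\tau_r))$ already lies in $E_{p^i}(K)$.
\item A torsion value would have to be $(0,\pm\bar\varpi^i/2)$, resp.\ $(0,\pm\varpi^i/2)$ for $\varphi^c$. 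The paper rules out $O$ using that $\varphi(0)$ is a primitive $\sqrt{-3}$-torsion point, together with the Atkin--Lehner relation between $\tau_r$ and $W(\tau_r)$.
\item It then shows that the cube roots $F_\pm,G,H$ of the products and quotients of $y\pm\bar\varpi^i/2$ and $y^c\pm\varpi^i/2$ are $\Gamma$-invariant. So their values at $\tau_r$ lie in $K(\sqrt[3]{\varpi})$.
\item If $\varphi(\tau_r)$ and $\varphi^c(\tau_r)$ were both torsion, one of these values would be a cube root of $\bar\varpi^i/\varpi^i$ or of $-p^i$. Neither cube root lies in $K(\sqrt[3]{\varpi})$, which gives the contradiction.
\end{itemize}
The heavy input enters in the $\Gamma$-invariance. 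Integrality of the $q$-expansions needs the Manin--Stevens constant to be a unit, proved via Burungale--Flach at $3$ and a period comparison with the twist $E_{p^{2i}}$ at $p$. After that, Calegari--Dimitrov--Tang gives congruence. This Kummer-theoretic contradiction is the idea your Step 4 is missing.
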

Let us describe our CM point briefly. Our construction gives an explicit algorithm for computing the rational solutions of the theorem. In this paper, we assume $i=1,2$\footnote{I am sorry for the abuse of $i$ but I feel $i$ is the simplest candidate for the superscript. Sometimes, $i$ will also be used as the imaginary number for example in the expressions $2\pi i$ or $i\infty$. But it will be easy to distinguish from the context.}.
For convenience, we will use the elliptic curve $E_{p^i}:y^2=x^3+p^{2i}/4$ which is isogenous to $y^2=x^3-432p^{2i}$ over $\BQ$. Let $K=\BQ(\sqrt{-3})$ be the CM field of $E_{p^i}$ and fix a splitting of $p=\varpi\bar\varpi$ with $\varpi\equiv 1\mod 3$ in $K$. By a theorem of Shimura, we have modular parametrizations 
$$\varphi: X_\Gamma\longrightarrow E_{\bar{\varpi}^i}:y^2=x^3+\bar{\varpi}^{2i}/4,$$
$$\varphi^c: X_\Gamma\longrightarrow E_{{\varpi}^i}:y^2=x^3+{\varpi}^{2i}/4$$
where
$$\Gamma=\set{\matrixx{a}{b}{c}{d}\in \Gamma_0(N): d \mod p\ \text{is a cube}}$$
and $N=9p$ or $27p$ depending on $p^i\equiv 4$ or $7$ modulo $9$. 
We also consider the maps
\[\phi: E_{\bar\varpi^i}\longrightarrow E_{p^i},\]
\[\phi^c: E_{\varpi^i}\longrightarrow E_{p^i}\]
given by
\[(x,y)\mapsto (\sqrt[3]{\varpi^{2i}}x, \varpi^i y),\]
\[(x,y)\mapsto (\sqrt[3]{\bar\varpi^{2i}}x, \bar\varpi^i y).\]
Let $r\in\BZ$ be a solution of $r^2-r+1\equiv 0\mod 3p$ such that $-r\equiv \omega^2\mod\varpi$ where $\omega=-\frac{1}{2}+\frac{\sqrt{-3}}{2}$ is a cubic root of unity. Let $\tau_r=\frac{-1}{3(\omega+r)}$ be the CM point on the upper half plane, then using Shimura's reciprocity law we can prove $\phi\circ\varphi(\tau_r), \phi^c\circ\varphi^c(\tau_r)\in E_{p^i}(K)$.

The work of this paper is very different from previous results in the construction of CM points, e.g.,\cite{DV17}\cite{HSY19}\cite{Yin}.  In traditional method, one fixes a modular parametrization of a chosen elliptic curve over $\BQ$. The points constructed are defined over the ring class field of some imaginary quadratic field and one need to take a trace. Instead, we use the modular parametrization of varied elliptic curves over imaginary quadratic fields and our points are defined over the base field directly, so we do not take any trace. To prove the point constructed above is non-torsion, we write $\varphi(\tau)=(x(\tau),y(\tau))$ and $\varphi^c(\tau)=(x^c(\tau),y^c(\tau))$. We first prove the functions 
$$F_-(z)=\lrb{y(z)-\frac{\bar\varpi^i}{2}}^{\frac{1}{3}}\lrb{y^c(z)-\frac{\varpi^i}{2}}^{-\frac{1}{3}},$$
$$F_+(z)=\lrb{y(z)+\frac{\bar\varpi^i}{2}}^{\frac{1}{3}}\lrb{y^c(z)+\frac{\varpi^i}{2}}^{-\frac{1}{3}},$$ 
$$H(z)=\lrb{y(z)-\frac{\bar\varpi^i}{2}}^{\frac{1}{3}}\lrb{y^c(z)+\frac{\varpi^i}{2}}^{\frac{1}{3}},$$ 
$$G(z)=\lrb{y(z)+\frac{\bar\varpi^i}{2}}^{\frac{1}{3}}\lrb{y^c(z)-\frac{\varpi^i}{2}}^{\frac{1}{3}}$$ are $\Gamma$ invariant. This implies $F_-(\tau_r)$, $F_+(\tau_r)$, $H(\tau_r)$, $G(\tau_r)\in K(\sqrt[3]{\varpi})$ again by Shimura reciprocity law. Then we prove that without loss of generality we can assume both $\varphi(\tau_r)$ and $\varphi^c(\tau_r)$ are not zero points. As a result, if both $\varphi(\tau_r)$ and $\varphi^c(\tau_r)$ are torsion points, then at least one of $F_-(\tau_r)$, $F_+(\tau_r)$, $H(\tau_r)$, $G(\tau_r)$ has a factor $\sqrt[3]{\bar\varpi}$, which is a contradiction. 

In the first step, in order to use the Unbounded Denominators Conjecture which is recently proved by Calegari-Dimitrov-Tang \cite{CDT} to prove that the functions above are modular on some congruence subgroups, we need to show the modular parametrizations have integral coefficients. For this we need first to know that the Manin-Stevens constants of $E_{\bar\varpi^i}$ and $E_{\varpi^i}$ are units. At  unramified places we can use the results or methods of Cesnavicius \cite{Ce,Ce1}. At the semistable places above $p$, Cesnavicius's results do not apply since our elliptic curve is not defined over $\BQ$. Instead, we approach this by comparing the periods of twists of modular forms. At the most complicated place $3$, this is achieved using  the Full BSD conjecture of $E_{\bar\varpi^i}$ and $E_{\varpi^i}$ due to Burungale-Flach \cite{BF}. This is a totally new approach to the Manin-Stevens constant. The utility of the most recent progress in number theory illustrates that our proof of the main result is totally original and new. 

We will also give another short and direct proof of the first step assuming the GRH or the generalized Artin's primitive root conjecture. But we still need to know some Manin-Stevens constants are units first. It is a little surprising that Sylvester's conjecture can be deduced from these two at first sight irrelevant conjectures.

This paper is organized as follows. In section 2, we prepare some results we will need. In section 3, we investigate the modular curve and Hecke action. In section 4, we construct the CM point. In section 5, we study the cubic root of modular functions. In section 6, we study the Manin-Stevens constant. In section 7,  we prove the integrality of the modular parametrization. In section 8, we prove the main result. In section 9, we give another proof assuming the generalized Artin primitive root conjecture or GRH. In section 10, we give some examples.

\subsection*{Acknowledgments} The author would like to thank Professor Ye Tian for introducing this beautiful conjecture to him in 2016 when the author was a postdoctoral fellow at Morningside Center of Mathematics. He also wants to thank John Voight, Hang Liu, Bin Guan for useful discussions. The author is grateful to Brian Conrad, Frank Calegari and Kestutis Cesnavicius for the discussion and help on the integrality of modular parametrization. The author also thanks Frank Calegari, Vesselin Dimitrov, Yunqing Tang for the explanation of their result in the meromorphic modular functions case. Many of the computations in the research are done on the Sagemath system \cite{Sage}, and thanks are also given to the Sagemath team.

\section{Preliminary}
First of all, let us prove a result concerning the ray class field extensions of the imaginary quadratic field $K=\BQ(\sqrt{-3})$. For an ideal $\fm$ of $\CO_K$, let $H_{\fm}$ be the ray class field of $K$ with modulus $\fm$. Let $\sigma: \wh{K}^\times\ra \Gal(K^\ab/K)$ be the Artin reciprocity law and we denote by $\sigma_t$ the image of $t\in\wh{K}^\times$. Write $\fm=\prod_v\fp_v ^{n_v}$ and set
\[U_\fm=\prod _{v|\fm}\lrb{1+\fp_v ^{n_v}}\times\prod _{v\nmid\fm}\CO_{K,v} ^{\times},\]
Then $\Gal(H_\fm/K)\cong \widehat{K}^\times/KU_\fm$ by class field theory.

In the rest of the paper, $\omega=(-1+\sqrt{-3})/2$ is a cubic root of unity. The letter $p$ will always mean a prime integer congruent to $4,7\mod 9$ and we will use $q$ to denote a general prime. Fix a decomposition $p=\varpi\bar{\varpi}$ in $K$ such that $\varpi\equiv 1\mod 3$ and assume $\fp=(\varpi)$ and $\bar{\fp}=(\bar\varpi)$ are the prime ideals above $p$.
For an element $a$ of $K$, we will use $a_v$ to denote the embedding of $a$ into $\wh{K}^\times$ with the $v$-place $a$ and all other places $1$ (but we will use $a_3$ instead of $a_{\sqrt{-3}}$ below for simplicity, e.g., we write $\omega_3$ for $\omega_{\sqrt{-3}}$). Since $K(\sqrt[3]{\varpi})$ is not Galois over $\BQ$, the field $K(\sqrt[3]{\varpi})$ is not contained in any ring class field by the result in \cite{Cox89}.
\begin{prop}\label{LCF}
Let the notation be as above, we have
\begin{itemize}
\item[1.] The field $H_{9\varpi}=H_{3\varpi}(\sqrt[3]{3})$ with Galois group 
$$\Gal(H_{9\varpi}/H_{3\varpi})\simeq \langle1+3\omega_3\rangle^{\BZ/3\BZ},$$ 
and
\[\left(\sqrt[3]{3}\right)^{\sigma_{1+3\omega_3}-1}=\omega^2,\ \ \ \left(\sqrt[3]{\varpi}\right)^{\sigma_{1+3\omega_3}-1}=1.\]
\item[2.] The field $K(\sqrt[3]{\varpi})$ is contained in $H_{3\varpi}$.
\item[3.] We have 
\[\left(\sqrt[3]{\varpi}\right)^{\sigma_{\omega_\fp}-1}=\left\{\begin{aligned} {\omega},&\quad p\equiv 4\mod 9, \\{\omega^2},&\quad p\equiv 7\mod 9.  \end{aligned}\right.\]
So, $\Gal(K(\sqrt[3]{\varpi})/K)=\pair{\sigma_{\omega_\fp}}$.
\item[4.] All the above also holds if we change $\varpi$ to $\bar\varpi$.
\end{itemize}
\end{prop}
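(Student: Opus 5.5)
The plan is to compute everything through the local description of the Artin map on Kummer extensions. For $a\in K^\times$ and $t\in\wh{K}^\times$ one has
\[
\left(\sqrt[3]{a}\right)^{\sigma_t-1}=\prod_v \lrb{\frac{t_v,a}{v}}_3^{\pm1},
\]
a product of cubic Hilbert symbols; I will fix the sign convention once and for all. When $t$ is supported at a single place, only that place contributes.

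\emph{Part 2.} Write $\lambda=\sqrt{-3}$, so that $3\CO_{K,\lambda}=\lambda^2\CO_{K,\lambda}$, and set $U^{(n)}=1+\lambda^n\CO_{K,\lambda}$. It suffices to show that the conductor of $K(\sqrt[3]{\varpi})/K$ divides $3\varpi$, i.e.\ that $(u,\varpi)_v=1$ for every $u\in U_{3\varpi}$.
\begin{itemize}
\item At places $v\nmid 3p$, $\varpi$ is a unit and the extension is unramified, so the symbol is trivial.
\item At $\bar\fp$, $\varpi$ is also a unit, so the symbol is trivial.
\item At $\fp$, $u\equiv1\bmod\varpi$ and the extension is tamely ramified, so the tame symbol is trivial.
\item At $\lambda$, $\varpi\equiv1\bmod 3$ gives $\varpi\in U^{(2)}$, while $u\in U^{(2)}$. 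Here I use the standard fact for $\BQ_3(\omega)$, where $e=2$, that the cubic Hilbert pairing kills $U^{(i)}\times U^{(j)}$ whenever $i+j\ge 4=3e/(3-1)+1$. This can be verified with the Artin--Hasse explicit formula, or by checking directly that $U^{(4)}$ consists of cubes and using bilinearity on generators.
\end{itemize}

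\emph{Part 3.} Take $t=\omega_\fp$. The only contributing place is $\fp$, and since $\omega$ is a unit there the tame symbol is the cubic residue symbol
\[
\lrb{\frac{\omega}{\varpi}}_3=\omega^{(N\fp-1)/3}=\omega^{(p-1)/3}.
\]
If $p\equiv4\bmod 9$ then $(p-1)/3\equiv1\bmod3$, giving $\omega$. If $p\equiv7\bmod9$ then $(p-1)/3\equiv2\bmod3$, giving $\omega^2$; the exact exponent depends on the sign convention. Since this value is nontrivial and $[K(\sqrt[3]{\varpi}):K]=3$, $\sigma_{\omega_\fp}$ generates the Galois group.

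\emph{Part 1.} First I compute the kernel of $\Gal(H_{9\varpi}/K)\to\Gal(H_{3\varpi}/K)$. It is
\[
U_{3\varpi}K^\times/U_{9\varpi}K^\times\;\cong\;U^{(2)}/\big(U^{(4)}\cdot(\text{global units}\equiv1\bmod 3\varpi)\big).
\]
I must determine this quotient carefully, and show it is cyclic of order $3$ generated by $1+3\omega_3$. The delicate point is to check which classes of $1+3a$ die; a naive count of $U^{(2)}/U^{(4)}$ gives $9$. So this index computation is the step I expect to be the main obstacle, and I would do it by explicitly identifying which elements of $U^{(2)}/U^{(4)}$ become trivial in the ray class group.

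Next, $K(\sqrt[3]{3})$ has conductor dividing $9$, since $3$ is a unit times $\lambda^2$ and the Kummer conductor at $\lambda$ is bounded by $\lambda^4$. Hence $\sqrt[3]{3}\in H_{9\varpi}$.

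I then compute $(1+3\omega,3)_\lambda$ explicitly via the Artin--Hasse formula, or by reducing to $(1+3\omega,\lambda)_\lambda$ together with a unit symbol, and expect to get $\omega^2$. Since this value is nontrivial, $\sqrt[3]{3}\notin H_{3\varpi}$, and therefore $H_{9\varpi}=H_{3\varpi}(\sqrt[3]{3})$. Finally, $(\sqrt[3]{\varpi})^{\sigma_{1+3\omega_3}-1}=1$ follows directly from Part 2, because $1+3\omega_3\in U_{3\varpi}$.

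\emph{Part 4.} Everything above is symmetric under complex conjugation, since $\bar\varpi\equiv1\bmod 3$ as well, so the same argument applies with $\varpi$ replaced by $\bar\varpi$.
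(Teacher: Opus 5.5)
\textbf{The gap is in Part 1.} It sits exactly at the step you flagged, and it cannot be closed because the assertion is false.

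In your own description of the kernel, nothing dies. The only unit of $\CO_K$ congruent to $1 \bmod 3$ is $1$: the six units $\pm\omega^j$ map injectively into $(\CO_K/3)^\times$, which has order $6$. Hence
\[
\Gal(H_{9\varpi}/H_{3\varpi})\cong U^{(2)}/U^{(4)}\cong \CO_K/3\CO_K\cong(\BZ/3\BZ)^2 .
\]
Equivalently, $[H_{3\varpi}:K]=6(p-1)/6=p-1$ while $[H_{9\varpi}:K]=54(p-1)/6=9(p-1)$. The missing cubic piece is $K(\zeta_9)=K(\sqrt[3]{\omega})$. It has conductor $\lambda^3$ because $\omega\in U^{(1)}\setminus U^{(2)}$, so it lies in $H_{9\varpi}$ but not in $H_{3\varpi}$.

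Consequently:
\begin{itemize}
\item no analysis of which classes of $1+3a$ vanish will produce a cyclic group of order $3$;
\item $H_{9\varpi}\neq H_{3\varpi}(\sqrt[3]{3})$, so your ``therefore'' does not follow.
\end{itemize}
The paper's proof gives you nothing to fill this in: it simply asserts the cyclicity. Its own proof of Part 2 then uses $(1+3\CO_{K,3})/(1+9\CO_{K,3})$ of order $9$, which contradicts that assertion.

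\textbf{What your remaining steps actually prove.} They prove the corrected statement: $H_{9\varpi}=H_{3\varpi}(\zeta_9,\sqrt[3]{3})$, with group $\langle\sigma_{4_3}\rangle\times\langle\sigma_{1+3\omega_3}\rangle$.
\begin{itemize}
\item $\sigma_{4_3}$ fixes $\sqrt[3]{3}$ but moves $\zeta_9$. To see this, apply the product formula to the global pairs $(4,3)$ and $(4,\omega)$. The only other contributing place is $2$, where the tame symbols are $3^{2}\equiv 1$ and $\omega^{\pm 2}\neq 1$.
\item Hence $\sigma_{1+3\omega_3}$ generates $\Gal(H_{3\varpi}(\sqrt[3]{3})/H_{3\varpi})$, and it fixes $\sqrt[3]{\varpi}$ by your Part 2.
\item The statement about $H_{9\varpi}$ is never used later in the paper.
\end{itemize}
For the value of $(1+3\omega,3)_\lambda$ you do not need Artin--Hasse. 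As in the paper, $1+3\omega$ is a prime of norm $7$, so the product formula reduces the symbol to the tame symbol at $(1+3\omega)$, namely $3^{\mp 2}\bmod (1+3\omega)$. There $3^2\equiv 2\equiv\omega$, so the value is a nontrivial cube root of unity.

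\textbf{The other parts.} Parts 2--4 of your plan are fine. Part 3 is the paper's argument. Your Part 2 uses the filtration bound $(U^{(2)},U^{(2)})_\lambda=1$, which is cleaner than the paper's check on the generators $1+3\omega$ and $\pm 2$ via cubic reciprocity. It also avoids deducing Part 2 from Part 1.

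\textbf{A caution about your hedging on signs.} Take the normalization $(\pi,u)_v=u^{(q-1)/3}$. It gives $\omega^2$ in Part 1 but $(\omega,\varpi)_{\fp}=\omega^{-(p-1)/3}$ in Part 3. The other normalization inverts both values. So the specific values stated in Parts 1 and 3 cannot hold simultaneously. Fix one convention and compute both values with it.
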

\begin{proof}
For the first assertion, the Galois group
\[\Gal(H_{9\varpi}/H_{3\varpi})\simeq K^\times U_{3\varpi}/K^\times U_{9\varpi}\]
is cyclic of order $3$ and generated by $1+3\omega_3$. Let $v$ be the place corresponding to the prime ideal $(1+3\omega)$. Then by the local-global principle, we have
\[\left(\sqrt[3]{3}\right)^{\sigma_{1+3\omega_3}-1}=\hilbert{1+3\omega_3}{ 3}{K_3}{3}=\hilbert{1+3\omega_v}{ 3}{K_v}{3}^{-1}=3^{-2}\mod (1+3\omega)=\omega^2,\]
here $\hilbert{\cdot}{\cdot}{K_w}{3}$ denotes the $3$rd Hilbert symbol over the local field $K_w$.
If $p\equiv 4\mod 9$, then $\varpi\equiv 3\omega+4,-3\omega+1,-2\mod 9\CO_K$; if $p\equiv 7\mod 9$, then $\varpi\equiv -3\omega-2,3\omega+1,4\mod 9\CO_K$. So

\[\left(\sqrt[3]{\varpi}\right)^{\sigma_{1+3\omega_3}-1}=\hilbert{1+3\omega_3}{ \varpi}{K_3}{3}=\hilbert{1+3\omega_v}{\varpi\mod 9}{K_v}{3}^{-1}=1.\]

For the second assertion, by the class field theory, it suffices to prove that, under the Artin reciprocity map, 
$$U_{3\varpi}=(1+3\CO_{K,3})(1+\varpi\CO_{K,\varpi})\prod_{v\nmid 3\varpi}\CO_{K,v}^\times$$ 
fixes $\sqrt[3]{\varpi}$.  Since $K(\sqrt[3]{\varpi})/K$ is unramified outside $3\varpi$, $\prod_{v\nmid 3\varpi}\CO_{K,v}^\times$ fixes $\sqrt[3]{\varpi}$. Using the Hilbert symbol, it is clear that $(1+\varpi\CO_{K,\varpi})$ fixes $\sqrt[3]{\varpi}$. Since $1+9\CO_{K,3}\subset (K^\times_3)^3$, Hilbert symbol shows $1+9\CO_{K,3}$ fixex $\sqrt[3]{\varpi}$. We have
\[(1+3\CO_{K,3})/(1+9\CO_{K,3})\cong \pair{1+3}^{\BZ/3\BZ}\times \pair{1+3\sqrt{-3}}^{\BZ/3\BZ}\]
But the first assertion shows $1+3\omega_3$ also fixes $\sqrt[3]{\varpi}$. By the reciprocity law for the $n$-th power residues \cite[Theorem 8.14]{Cox89} and the reciprocity law for the cubic residue symbol \cite[Theorem 4.12]{Cox89}, we have
$$\hilbert{2}{\varpi}{K_3}{3}=\hilbert{-2}{\varpi}{K_3}{3}=1.$$ So $1+3\CO_{K,3}$ fixes $\sqrt[3]{\varpi}$.


For the last assertion, 
\[\left(\sqrt[3]{\varpi}\right)^{\sigma_{\omega_\fp}-1}=\hilbert{\omega_\fp}{\varpi}{K_\fp}{3}=\omega^{\frac{p-1}{3}}=\left\{\begin{aligned} {\omega},&\quad p\equiv 4\mod 9, \\{\omega^2},&\quad p\equiv 7\mod 9.  \end{aligned}\right.\]
\end{proof}

Next, we will also discuss the Hecke characters of the elliptic curves with $j$-invariant $0$ over $K$. More explicitly, we will prove the following result:
\begin{theorem}\label{cmcharacter}
Let $D\in\CO_K$. Consider the elliptic curve $E: y^2=x^3+\frac{D}{4}$. For a prime ideal $\fq$ of $\CO_K$, we can write $\fq=(\varpi_\fq)$ with $\varpi_\fq\equiv 2\mod 3$. Assume $\fq\nmid 6 D$, then
\[\sharp \tilde{E}(\BF_\fq)=\sharp  \BF_\fq +1+\ov{\lrb{\frac{D}{\varpi_\fq}}_6}\varpi_\fq+\lrb{\frac{D}{\varpi_\fq}}_6\bar\varpi_\fq.\]
Where $\BF_\fq$ is the residue field of $\fq$ and $\lrb{\frac{\cdot}{\cdot}}_6$ is the $6$-th power residue symbol. 
\end{theorem}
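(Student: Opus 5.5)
We prove the formula by a direct character-sum count over the residue field $\BF_\fq$, with $Q=\sharp\BF_\fq=N\fq$. Since $\fq\nmid 6D$, the reduction $\tilde E: y^2=x^3+D/4$ is smooth. Counting affine points together with the point at infinity gives
\[\sharp\tilde E(\BF_\fq)=Q+1+\sum_{x\in\BF_\fq}\chi_2\!\left(x^3+\tfrac{D}{4}\right),\]
where $\chi_2$ is the quadratic character of $\BF_\fq^\times$. It is cleaner to count solutions of $y^2-x^3=D/4$ as $\sum_{a-b=D/4}N_2(a)N_3(b)$, where $N_k(c)=\sharp\{t:t^k=c\}$.

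\emph{Case $Q\equiv 2\mod 3$.} This happens only for $\fq=(q)$ inert with $q\equiv 2\mod 3$, hence $Q=q^2\equiv 1\mod 3$. So this case is really $\fq$ inert, where the trace is $0$ by supersingularity if $q\equiv 2 \mod 3$ is considered over $\BF_q$. Over $\BF_{q^2}$, however, the Frobenius is $-q$ up to a unit. Since $Q\equiv 1\mod 6$ in all cases ($Q=q\equiv1\mod 6$ for split primes, $Q=q^2$ for inert primes), we treat all cases uniformly.

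Write $N_2(a)=\sum_{\chi^2=1}\chi(a)$ and $N_3(b)=\sum_{\psi^3=1}\psi(b)$, with characters extended by $\chi(0)=0$ for nontrivial $\chi$. The main terms give $Q$, cross terms with one trivial character vanish, and the rest gives
\[\sum_{\psi\neq 1}\sum_{a-b=c}\chi_2(a)\psi(b),\qquad c=D/4.\]
Substituting $a=ca'$ and $b=-cb'$ turns this into $\chi_2(c)\psi(-c)J(\chi_2,\psi)$, up to normalizing the sign via $\psi(-1)=1$. Here $J(\chi_2,\psi)=\sum_{a+b=1}\chi_2(a)\psi(b)$ is the Jacobi sum. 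The character $\chi_2(c)\psi(c)$ is a sextic character of $c$. With $\psi=(\cdot/\varpi_\fq)_3$ and $\chi_2=(\cdot/\varpi_\fq)_6^3$, we get $\chi_2\psi=(\cdot/\varpi_\fq)_6^{5}=\ov{(\cdot/\varpi_\fq)_6}$, and the other term is its conjugate. The number of points is therefore
\[Q+1+\ov{\lrb{\tfrac{c}{\varpi_\fq}}_6}J(\chi_2,\psi)+\lrb{\tfrac{c}{\varpi_\fq}}_6\ov{J(\chi_2,\psi)}\]
(up to the correct pairing of conjugates, which is to be checked).

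Three steps remain. First, replace $c=D/4$ by $D$. This uses $(4/\varpi_\fq)_6=(2/\varpi_\fq)_3$, so it must be absorbed into the Jacobi sum evaluation, and one must also check that $(-1/\varpi_\fq)_6$ behaves correctly. Second, evaluate the Jacobi sum. The main obstacle is showing $J(\chi_2,\psi)\cdot(\text{the factor }(4/\varpi_\fq)_6^{\pm1})=-\varpi_\fq$ exactly, using the normalization $\varpi_\fq\equiv 2\mod 3$; this sign-and-unit determination is the hard part. Note that $J\in\BZ[\omega]$ with $J\bar J=Q$, so $J$ is $\varpi_\fq$ times a sixth root of unity. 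For the split case I would follow the classical Ireland–Rosen argument, which determines the unit through congruences. For the cubic Jacobi sum one has $J(\psi,\psi)\equiv -1\mod 3$. For $J(\chi_2,\psi)$, one relates it to $J(\psi,\psi)$ via Gauss sums and the Hasse–Davenport product formula, which gives $J(\chi_2,\psi)=\psi(4)J(\psi,\psi)$, and this is exactly where the $4$ in $D/4$ cancels. Then $J(\psi,\psi)\equiv -1\mod 3$ and the chosen normalization of $\varpi_\fq$ give $J(\psi,\psi)=\varpi_\fq$ up to the sign conventions. The inert case works the same way over $\BF_{q^2}$, where one should check that the unit is consistent with $\varpi_\fq=-q$. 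Third, match the conjugation to the pairing in the stated formula, up to replacing $\psi$ by $\bar\psi$.
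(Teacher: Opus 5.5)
Your route is the paper's. You count $y^2-x^3=D/4$ as $\sum N_2(a)N_3(b)$, reduce to $\chi_2\psi(D/4)J(\chi_2,\psi)$ plus its conjugate, and absorb the $4$ via $J(\chi_2,\psi)=\psi(4)J(\psi,\psi)$. The paper gets that identity from the elementary computation $J(\rho,\xi)=\sum_t\xi(1-t^2)$ in Lemma~\ref{Jacobi} rather than from Hasse--Davenport; either works. You then pin down $J(\psi,\psi)$ from its norm and a congruence mod $3$, citing Ireland--Rosen for split $\fq$. The gap is that this last evaluation is the entire content of the theorem once the bookkeeping is done, and you leave it hedged and misstated in three concrete ways.

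First, since $\chi_2\psi(D)=\ov{(D/\varpi_\fq)_6}$, the statement needs exactly $J(\psi,\psi)=+\varpi_\fq$. Your stated target $-\varpi_\fq$ is $\equiv 1\bmod 3$, which contradicts the congruence $J(\psi,\psi)\equiv -1\bmod 3$ you invoke later.

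Second, in the inert case the normalization $\varpi_\fq\equiv 2\bmod 3$ forces $\varpi_\fq=q$, not $-q$. For $D\in\BZ$ the $q^2$-Frobenius is $-q$, which is exactly why $\sharp\tilde E(\BF_{q^2})=q^2+1+2q$; so the formula carries $+\varpi_\fq$ with $\varpi_\fq=q$, and with $\varpi_\fq=-q$ the displayed identity is false. What you need here is Lemma~\ref{Jacobi1}:
\begin{itemize}
\item $g(\psi)g(\bar\psi)=\psi(-1)q^2=q^2$ gives $J(\psi,\psi)=g(\psi)^2/g(\bar\psi)=g(\psi)^3/q^2$;
\item $g(\psi)^3\equiv\sum_t\psi(t)^3\zeta_q^{3\tr(t)}=-1\bmod 3$, so $J(\psi,\psi)\equiv-1\bmod 3$;
\item since $q$ is inert and $J\bar J=q^2$, $J$ lies in $\{\pm\omega^jq\}$, and only $q$ itself is $\equiv-1\bmod 3$.
\end{itemize}

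Third, in the split case the inference ``$J\bar J=Q$, so $J$ is $\varpi_\fq$ times a sixth root of unity'' is invalid. It only shows $J$ is associate to $\varpi_\fq$ or to $\bar\varpi_\fq$. Both are $\equiv 2\bmod 3$, so no congruence mod $3$ separates them, and swapping them changes the formula. The missing input is $\varpi_\fq\mid J(\psi,\psi)$. This follows from $\psi(x)\equiv x^{(q-1)/3}\bmod\varpi_\fq$ together with the vanishing in $\BF_q$ of $\sum_x x^{(q-1)/3}(1-x)^{(q-1)/3}$, since every monomial there has degree strictly between $0$ and $q-1$. Ireland--Rosen contains this step, so citing them as the paper does is fine, but the justification you sketched does not supply it.

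Two minor points: the ``case $Q\equiv 2\bmod 3$'' never occurs, and there is no $(-1/\varpi_\fq)_6$ issue, because your substitution only produces $\psi(-1)=1$.
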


The proof is almost the same as \cite[Theorem 4, Section 18.3]{IRbook} which deals with the case $D\in\BZ$.
For the convenience of the reader, we reproduce it here. We need some preparations.

Let $q$ be a prime. For $a\in\BF_{q^m}$, let $N_{q^m}(x^n=a)$ denote the number of solutions of the equation $x^n=a$ in $\BF_{q^m}$. If $d=gcd(n, q^m-1)$, we have 
\begin{proposition}\label{sumcharacter}
$N_{q^m}(x^n=a)=\sum_{\chi^d=1}\chi(a)$ where the sum is over all characters of $\BF_{p^m}$ of order dividing $d$.
\end{proposition}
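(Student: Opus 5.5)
The plan is the standard character-sum count in the multiplicative group $\BF_{q^m}^\times$, which is cyclic of order $q^m-1$. Here $d=\gcd(n,q^m-1)$ and the sum runs over multiplicative characters $\chi$ of $\BF_{q^m}$ with $\chi^d=1$. The usual convention is $\chi(0)=0$ for $\chi\neq\varepsilon$ and $\varepsilon(0)=1$ for the trivial character $\varepsilon$. (The subscript $p^m$ in the statement should read $q^m$.)

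\textbf{Case $a=0$.} The equation $x^n=0$ has the unique solution $x=0$. On the character side only $\varepsilon$ contributes, giving $1$, so both sides agree.

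\textbf{Case $a\neq0$, reduction.} Solutions of $x^n=a$ must lie in $\BF_{q^m}^\times$. Fix a generator $g$ of this group and write $a=g^k$. Taking $x=g^j$, the equation becomes $nj\equiv k \pmod{q^m-1}$. This congruence is solvable if and only if $d\mid k$, and in that case it has exactly $d$ solutions $j$ modulo $q^m-1$. Hence $N_{q^m}(x^n=a)=d$ if $a$ is a $d$-th power (equivalently an $n$-th power) and $0$ otherwise. The $n$-th powers form the subgroup of index $d$.

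\textbf{Case $a\neq0$, character side.} The characters with $\chi^d=1$ are exactly $\chi_\ell(g)=\zeta_d^{\ell}$ for $\ell=0,\dots,d-1$, where $\zeta_d$ is a primitive $d$-th root of unity. There are $d$ of them, because the character group is cyclic of order $q^m-1$ and $d\mid q^m-1$. Then
\[\sum_{\chi^d=1}\chi(a)=\sum_{\ell=0}^{d-1}\zeta_d^{\ell k}.\]
This geometric sum equals $d$ if $d\mid k$ and $0$ otherwise, matching the count from the reduction step.

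There is no real obstacle here. The only points needing care are the convention at $a=0$ and checking that $\gcd(n,q^m-1)$ is the right modulus for both the solution count and the character group.
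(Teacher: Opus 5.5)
Your argument is correct and is essentially the standard proof the paper defers to. The paper gives no argument of its own; it cites Weil for the general formula and Ireland--Rosen, Proposition 8.1.5, for $m=1$, where the non-solvable case is handled by multiplying $T=\sum_{\chi^n=\varepsilon}\chi(a)$ by $\rho(a)\neq 1$ for a suitable $\rho$ with $\rho^n=\varepsilon$. Your explicit computation with a generator $g$ of the cyclic group $\BF_{q^m}^\times$ replaces that step and covers arbitrary $m$ and $n$ at once through $d=\gcd(n,q^m-1)$. Your convention $\varepsilon(0)=1$, $\chi(0)=0$ for $\chi\neq\varepsilon$ is also exactly the one the paper relies on later, e.g.\ when it writes $N_{q^m}(t^2=u)=1+\rho(u)$ with $\rho(0)=0$.
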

\begin{proof}
For the formula, see \cite[Page 2]{Weil49}. For a detailed proof of the case $m=1$, see \cite[Proposition 8.1.5]{IRbook}.
\end{proof}

\begin{lemma}\label{Jacobi}
Assume $\rho$ is a character of $\BF_{q^m}^*$ of order 2 and $\xi$ any nontrivial character of $\mathbb{F}_{q^m}^{*}$. We also set $\rho(0)=\xi(0)=0$. Then the Jacobi sum $J(\rho,\xi)=\sum_{u+v=1}\rho(u)\xi(v)$ where $u,v$ runs over $\BF_{q^m}$ satisfies
$$J(\rho, \xi)=\xi(4) J(\xi, \xi).$$
\end{lemma}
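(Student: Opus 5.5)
The plan is to count the solutions of $u+v=1$ weighted by $\rho(u)$ through the quadratic equation satisfied by a square root of $u$. Proposition \ref{sumcharacter} with $n=2$ gives $N(x^2=u)=1+\rho(u)$, since $\rho$ is the unique character of order $2$ and $d=2$; here $q$ is odd, which the existence of $\rho$ forces. Because $\sum_v\xi(v)=0$ for nontrivial $\xi$, we get
\[J(\rho,\xi)=\sum_{u+v=1}(1+\rho(u))\xi(v)=\sum_{x\in\BF_{q^m}}\xi(1-x^2).\]
The substitution $u\mapsto 1-x^2$ is legitimate because the $u$ with $\rho(u)=0$, namely $u=0$, is counted exactly once by $x=0$, and this matches $N(x^2=0)=1$.

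Next, write $1-x^2=(1-x)(1+x)$ and set $a=(1+x)/2$, so that $(1-x)/2=1-a$. As $x$ runs over $\BF_{q^m}$, $a$ runs over $\BF_{q^m}$ bijectively (again using $q$ odd), and
\[\xi(1-x^2)=\xi(4)\,\xi(a)\,\xi(1-a).\]
Summing over $a$ gives $\xi(4)\sum_{a}\xi(a)\xi(1-a)=\xi(4)J(\xi,\xi)$, which is the claimed identity. The convention $\xi(0)=0$ matters only at $a=0,1$, that is at $x=\pm1$, where both sides vanish termwise.

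The only step needing care is the first one. One must check that Proposition \ref{sumcharacter} applies to $a=0$ with the convention $\rho(0)=0$: the equation $x^2=0$ has the single solution $x=0$, while the character sum gives $1+\rho(0)=1$, so it agrees. One must also note that the extra term $\sum_v\xi(v)$ vanishes precisely because $\xi$ is nontrivial, which is where that hypothesis is used. Everything after that is a change of variables, so I expect no real obstacle.
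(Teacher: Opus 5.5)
Your proposal is correct and follows the paper's proof step for step. Both insert $1+\rho(u)=N(t^2=u)$ using $\sum_v\xi(v)=0$, rewrite the sum as $\sum_t\xi(1-t^2)$, and factor $1-t^2=4\cdot\frac{1-t}{2}\cdot\frac{1+t}{2}$ to obtain $\xi(4)J(\xi,\xi)$. Your checks at $u=0$ and $x=\pm1$ are the right details to verify.
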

\begin{proof}We have
$$
\begin{aligned}
J(\rho, \xi) &=\sum_{u+v=1} \rho(u) \xi(v) \\
&=\sum_{u+v=1}(1+\rho(u)) \xi(v)=\sum_{u+v=1} N_{q^m}\left(t^{2}=u\right) \xi(v) \\
&=\sum_{t} \xi\left(1-t^{2}\right)=\xi(4) \sum_{t} \xi\left(\frac{1-t}{2}\right) \xi\left(\frac{1+t}{2}\right)=\xi(4) J(\xi, \xi).
\end{aligned}
$$
\end{proof}

\begin{lemma}\label{Jacobi1}
Let $q\equiv 2\mod 3$ be a prime, then $J(\chi_q,\chi_q)=q$. Here $\chi_q(a)=(a / q)_{3}$ is the cubic residue symbol of $\CO_K$ viewed as a character of $\BF_{q^2}$.
\end{lemma}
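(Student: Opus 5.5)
We need $J(\chi_q,\chi_q)=q$ for $q\equiv 2\bmod 3$. Here $\CO_K/(q)=\BF_{q^2}$, and the cubic residue symbol mod $q$ is $\chi_q(a)\equiv a^{(q^2-1)/3}\bmod q$. The plan is to combine the norm of the Jacobi sum with a congruence mod $3$ and a Frobenius-invariance argument.

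\textbf{Absolute value.} Since $\chi_q$ is a nontrivial cubic character of $\BF_{q^2}^*$ and $\chi_q^2\neq 1$, the standard Jacobi sum identity gives $J(\chi_q,\chi_q)=g(\chi_q)^2/g(\chi_q^2)$ in terms of Gauss sums. Hence $|J(\chi_q,\chi_q)|^2=q^2$. Moreover $J\in\BZ[\omega]$, so $J\bar J=q^2$.

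\textbf{Rationality.} The restriction of $\chi_q$ to $\BF_q^*$ is trivial, because $(q^2-1)/3=(q-1)\cdot\frac{q+1}{3}$. Consequently $\chi_q(a^q)=\chi_q(a)^q=\chi_q(a)^2$, using $q\equiv 2\bmod 3$. Applying Frobenius $u\mapsto u^q$ permutes the pairs $(u,v)$ with $u+v=1$. Therefore
\[
J=\sum\chi_q(u^q)\chi_q(v^q)=\sum \chi_q^2(u)\chi_q^2(v)=\bar J.
\]
So $J\in\BZ$ with $J^2=q^2$, and $J=\pm q$.

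\textbf{Fixing the sign.} We determine the sign by working mod $3$. The pairs $(u,v)$ and $(v,u)$ contribute equally, and the only symmetric term is $u=v=1/2$, with $1/2\in\BF_q$. It contributes $\chi_q(1/2)^2=1$, since $\chi_q$ is trivial on $\BF_q^*$. The standard alternative is the congruence $J(\chi,\chi)\equiv -1\bmod 3$ from Ireland--Rosen: modulo $(1-\omega)$, every nonzero value $\chi(u)\chi(v)$ is $\equiv 1$. The number of pairs with $u,v\neq 0$ is $q^2-2$, so
\[
J\equiv q^2-2\equiv 2\equiv -1\pmod{1-\omega}.
\]
Since $J$ is an integer, this means $J\equiv -1\bmod 3$. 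Because $q\equiv 2\equiv -1\bmod 3$, we get $J=q$ rather than $-q$.

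The main point of care is the sign determination. It uses only that integers congruent mod $(1-\omega)$ are congruent mod $3$, together with the fact that $q\not\equiv -q\bmod 3$.
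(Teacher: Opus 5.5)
Your proof is correct, but it resolves the unit ambiguity differently from the paper.

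\textbf{The paper's route.} From $J\bar J=q^2$ and the fact that $q$ is inert in $\CO_K$, the paper gets $J\in\{\pm\omega^j q\}$. It then needs the full congruence $J\equiv -1 \bmod 3$ in $\CO_K$ to single out $q$ among these six associates. A congruence modulo $(1-\omega)$ alone would not separate $q$ from $\omega q$. The paper obtains the mod $3$ congruence from $J=g(\chi_q)^3/q^2$ together with $g(\chi_q)^3\equiv\sum_t\chi_q(t)^3\zeta_q^{3\tr(t)}=-1 \bmod 3$ in $\BZ[\omega,\zeta_q]$.

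\textbf{Your route.} You remove the units $\omega^j$ first, by a Galois argument. The Frobenius of $\BF_{q^2}/\BF_q$ permutes the pairs with $u+v=1$ and sends $\chi_q$ to $\chi_q^q=\bar\chi_q$. Hence $J=\bar J\in\BZ$, so $J=\pm q$. After that, the trivial count modulo $(1-\omega)$ is enough, because $(1-\omega)\cap\BZ=3\BZ$.

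\textbf{What each buys.} Your argument never computes in the larger ring $\BZ[\omega,\zeta_q]$, and it shows why $J$ is rational. However, it depends on $q$ being inert. The paper's argument is the standard Ireland--Rosen one: it gives the stronger congruence mod $3$ in $\CO_K$ directly, with no rationality input. Both arguments still use Gauss sums to get $|J|=q$.

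\textbf{Two small fixes.} First, the sentence about the symmetric pairs $(u,v),(v,u)$ and the term $u=v=1/2$ is never used. It also makes no sense for $q=2$, where $2=0$ in $\BF_4$, so delete it. Second, $\chi_q(a^q)=\chi_q(a)^q$ is just multiplicativity; it is not a consequence of $\chi_q$ being trivial on $\BF_q^*$, so drop the word ``consequently''. With these changes the argument is complete, including the case $q=2$.
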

\begin{proof}
It is well-known that $J(\chi_q,\chi_q)\ov{J(\chi_q,\chi_q)}=q^2$. We will prove that $J(\chi_q,\chi_q)\equiv 2\mod 3$, this will force $J(\chi_q,\chi_q)=q$ since there is only one element in 
$$\set{(-1)^i\omega^jq:i=0,1; j=0,1,2}$$ 
which is congruent $2$ modulo $3$. 

Let $g(\chi_q)=\sum_{t\in\BF_{q^2}}\chi_q(t)\zeta_q^{\tr(t)}$ be the Gauss sum of $\chi_q$. Then $g(\chi_q)g(\ov{\chi_q})=\chi_q(-1)q^2=q^2$.
Since both $\chi_q$ and $\chi_q^2$ are not the trivial character, by the basic property of Jacobi sums \cite[Theorem 1 and its Corollary, Page 93]{IRbook}, we have
$$J(\chi_q,\chi_q)=\frac{g(\chi_q)^2}{g(\chi_q^2)}=\frac{g(\chi_q)^3}{q^2}.$$ 
But 
\[g(\chi_q)^{3}=\left(\sum_{t\in\BF_{q^2}} \chi_q(t) \zeta_q^{\tr(t)}\right)^{3} \equiv \sum_{t\in\BF_{q^2}} \chi_q(t)^{3} \zeta_q^{3\tr(t)}=-1\mod 3,\]
so
$$
q^2 J(\chi_q, \chi_q)= g(\chi_q)^{3}\equiv-1\mod 3
$$
which implies $J(\chi_q, \chi_q)\equiv -1\mod 3$ since $q\equiv 2\mod 3$.
\end{proof}

\begin{proof}[Proof of Theorem \ref{cmcharacter}]
If $\varpi_\fq\bar\varpi_\fq=q$ with $q\equiv 1\mod 3$, then the proof in \cite[Page 305]{IRbook} also works here, since $\BF_{\fq}=\BF_q$ in this case. 

If $\varpi_\fq=q$ with $q\equiv 2\mod 3$, then $\BF_\fq=\BF_{q^2}$. By Proposition \ref{sumcharacter},

\[\begin{aligned}
N_{q^2}\left(y^{2}=x^{3}+\frac{D}{4}\right) &=\sum_{u+v=\frac{D}{4}} N_{q^2}\left(y^{2}=u\right) N_{q^2}\left(x^{3}=-v\right) \\
&=\sum_{u+v=\frac{D}{4}}(1+\rho(u))\left(1+\chi(-v)+\chi^{2}(-v)\right) \\
&=q^2+\sum_{u+v=\frac{D}{4}} \rho(u) \chi(v)+\sum_{u+v=\frac{D}{4}} \rho(u) \chi^{2}(v)
\end{aligned}\]
where $\rho$ is a character of order $2$ and $\chi$ a character of order $3$ of $\BF_{q^2}^*$ (we also set $\rho(0)=\chi(0)=0$). Making the substitutions $u=Du'/4$ and $v=Dv'/4$ we find 
\begin{equation}\label{ecff}
\sharp \tilde{E}(\BF_\fq)=q^2+1+\rho \chi(D/4) J(\rho, \chi)+\ov{\rho \chi(D/4)}\ov{J(\rho, \chi)},
\end{equation}
here $1$ is due to the point at $\infty$ and $J(\rho,\chi)=\sum_{a+b=1}\rho(a)\chi(b)$ is the Jacobi sum for $\BF_{q^2}$.
Using Lemma \ref{Jacobi}, Equation (\ref{ecff}) can be transformed into
\begin{equation}\label{eq1}
\sharp \tilde{E}(\BF_\fq)=q^2+1+\rho(D/4) \chi(D) J(\chi, \chi) + \overline{\rho(D/4)\chi(D) J(\chi, \chi)}.
\end{equation}
We want to specify $\rho$ and $\chi$. Let $(a / q)_{6}$ be the sixth power residue symbol and take $\rho(a)=(a / q)_{6}^{3}$ and $\chi(a)=(a / q)_{6}^{2}=(a / q)_{3} .$ Then $\rho \chi(a)=\rho(a) \chi(a)=(a / q)_{6}^{5}=(\overline{a / q})_{6} .$ Finally, Lemma \ref{Jacobi1} shows that $J\left(\chi_{q}, \chi_{q}\right)=q$. Substituting this information into Equation (\ref{eq1}), we finish the proof of the theorem.
\end{proof}

\newcommand{\varpii}{{\varpi^i}}
\newcommand{\bvarpi}{{\bar{\varpi}^{i}}}

By Theorem \ref{cmcharacter} as in \cite[Example 10.6]{Silvermanbook2}, the Hecke character $\psi_D$ associated to the elliptic curve $E$ in Theorem \ref{cmcharacter} satisfies $$\psi_D(\fq)=\ov{\lrb{\frac{D}{\varpi_\fq}}_6}\varpi_\fq$$ for any prime ideal $\fq\nmid 6D$ if we choose $\fq=(\varpi_\fq)$ with $\varpi_\fq\equiv 1\mod 3$. 
In this paper, we will consider the cubic twist families of elliptic curves
$$E_{\bar\varpi^i}:y^2=x^3+\frac{\bar\varpi^{2i}}{4}.$$ 
and
$$E_{\varpi^i}:y^2=x^3+\frac{\varpi^{2i}}{4}.$$ 
Since $\bar\varpi$ and $\varpi$ are not distinct from each other, so if one statement holds for $E_{\bar\varpi}$, then it automatically holds for $E_{\varpi}$ if we change $\bar\varpi$ to $\varpi$ in the statement. The conversation is also true. So for simplicity, most of the time, we only state facts for one of $E_{\bar\varpi}$ and $E_{\varpi}$  in the following. 

The Tate algorithm shows that the conductor of $E_{\varpi^i}$ is the ideal 
\begin{equation}\label{conductor}
N(E_{\varpi^i})=\begin{cases} (\sqrt{-3})^2(\varpi)^2,&\text{if}\ \ p^i\equiv 4\mod 9;\\   (\sqrt{-3})^4(\varpi)^2,& \text{if}\ \ p^i\equiv 7\mod 9. \end{cases}
\end{equation}
Let $\psi$, resp. $\psi^c$ be the Hecke character of $E_{\varpi^i}$, resp. $E_{\bar\varpi^i}$. Then 
$$\psi(\fq)=\ov{\lrb{\frac{\varpi^i}{\varpi_\fq}}_3}\varpi_\fq$$ with $\fq\nmid 3\varpi$ and $\varpi_\fq\equiv 1\mod 3$. 
Let $\fm$ be the conductor of $\psi$, then \cite[P21,8.2.7]{Grossbook} tells us $\fm^2=N(E_{\varpi^i})$, i.e.
\begin{equation}
\fm=\begin{cases} (\sqrt{-3})(\varpi),&\text{if}\ \ p^i\equiv 4\mod 9;\\   (\sqrt{-3})^2(\varpi),& \text{if}\ \ p^i\equiv 7\mod 9. \end{cases}
\end{equation}
By \cite{Shimura71} (see also \cite{NM1}), the functions
\begin{equation}\label{def}
f(\tau)=\sum_{\substack{\fa\in I_K(\fm)\\ \fa:\mathrm{integral}}} \psi(\fa)e^{2\pi i N(\fa)\tau}
\end{equation}
and
\begin{equation}
f^c(\tau)=\sum_{\substack{\fa\in I_K(\bar\fm)\\ \fa:\mathrm{integral}}} \psi^c(\fa)e^{2\pi i N(\fa)\tau}
\end{equation}
are modular forms on $\Gamma_0(N)$ with Nebentypus character $\xi(d)=\lrb{\frac{-3}{d}}\frac{\psi(d)}{d}$ and $\xi^c(d)=\lrb{\frac{-3}{d}}\frac{\psi^c(d)}{d}=\bar{\xi}(d)$, where 
$$N=3\Norm(\fm)=\begin{cases} 9p,&\text{if}\ \ p^i\equiv 4\mod 9;\\   27p,& \text{if}\ \ p^i\equiv 7\mod 9. \end{cases}$$ 
and $I_K(\fm)$ is the group of fractional ideals in $K$ prime to $\fm$. Note $f$ is constructed from the Hecke character of $E_{\varpi}$ while $f^c$ from $E_{\bar\varpi}$. It can be easily checked that $\xi(d)=1$ if and only if $(d,N)=1$ and $d$ is a cube modulo $p$. So $f$ is a modular form for the congruence subgroup
\begin{equation}\label{cg}
\Gamma=\left\{\matrixx{a}{b}{c}{d} \in\Gamma_0(N)\mid d\equiv e^3\mod p\ \text{for some}\ e\in\BZ\right\}.
\end{equation}
If we write $f(\tau)=\sum_{n\geq 1}a_n e^{2\pi i n \tau}$ then it can be checked that $f^c(\tau)=\sum_{n\geq 1}\bar{a}_n e^{2\pi i n \tau}$ although $\psi^c\neq\bar\psi$. Also from (\ref{def}), we can see that $a_n\neq 0$ only if $n\equiv 1\mod 3$.

By \cite{Shimura71} and \cite{Shimura73} (see also \cite{GL}), we can construct an abelian variety $A_{f}$ from $f$. By construction, $A_{f}=J_1(N)/I_{f}(J_1(N))$ where $I_{f}$ is the annihilator of $f$ in the Hecke algebra acting on $J_1(N)$. Shimura proved that $A_{f}$ is defined over $\BQ$, but it splits to $E_{\bar\varpi^i}\times E_{\varpi^i}$ over $K$ by the result in \cite{GL}. The pullback of $\Omega^1(A_{f})$ is the space spanned by $\set{f,f^c}$. But beyond expectation, by \cite[Theorem 1.1(iv)]{GL}, the pullback of the invariant differential of $E_{\bar\varpi^i}$ corresponds to $f$ (rather than $f^c$).     So by \cite[section 3]{Shimura73} and \cite[Theorem 1]{GL}, $E_{\bar\varpi^i}$ can be parameterized by $X_1(N)$ over $K$ through the integral of $f(z)$, i.e.
\[\varphi: \xymatrix{X_1(N)\ar[r]&J_1(N)\ar[r]&E_L=\BC/L\ar[r]& E_{\bvarpi}}\]
with
\begin{equation}\label{modularpara}
\varphi: t\mapsto z_t=2\pi i\s\int_{i\infty}^t f(z)dz\mapsto \lrb{\wp_L(z_t), \frac{1}{2}\wp'_L(z_t)}\mapsto \lrb{\frac{1}{c^2}\wp_L(z_t), \frac{1}{2c^3}\wp'_L(z_t)}.
\end{equation}
Here 
\begin{equation}\label{L}
L=\set{2\pi i \int_{i\infty}^{\gamma \infty}f(\tau) d\tau\mid \gamma\in\Gamma_1(N)}
\end{equation}
is the period lattice of $f$ over $X_1(N)$ and $c\in K$ is the Manin-Steven constant such that $\varphi^* \omega_{E_{\bar\varpi^i}}=c\cdot 2\pi i f(\tau)d\tau$ where $\omega_{E_{\bar\varpi^i}}$ is the Neron differential of $E_{\bar\varpi^i}$. This parametrization factors through the modular curve $X_\Gamma$.  It is conjectured that the Manin-Steven constant $c\in\CO_K^\times$ \cite{GL01} which means $E_L=E_{\bar\varpi^i}$ (not only isomorphic over $K$).

\section{modular curves and Hecke actions}

Let $\fU$ be the open compact subgroup of $\GL_2(\widehat{\BZ})$ consisting of the matrix $\matrixx{a}{b}{c}{d}$ such that $c\equiv 0\mod N$ and $d$ is a cube modulo $p$. Let $X_\fU$ be the modular curves over $\BQ$ whose underlying Riemann
surfaces are
\[X_\fU(\BC)=\GL_2(\BQ)^+\left\backslash \left(\left(\CH\bigsqcup\BP^1(\BQ)\right )\times \GL_2(\BA_f)\right/\fU\right),\]

Let $\Gamma$ be the subgroup of $\Gamma_0(N)$ defined in \eqref{cg}. Note that $\GL_2(\BQ)^+\cap \fU=\Gamma$. So $X_\fU$ is the modular curve $X_\Gamma$.

Put
\[W=\matrixx{0}{1}{-N}{0},\quad A=\matrixx{1}{1/3}{0}{1},\]
and
\begin{equation*}
B=-\frac{1}{N}WA^{-1}W=\matrixx{1}{0}{N/3}{1}.
\end{equation*}
Let $\gamma=\matrixx{a}{b}{Nc}{d}$ be an element in $\Gamma$, then
\[W\gamma W^{-1}=\matrixx{d}{-c}{-Nb}{a}\in \Gamma,\]
\[A\gamma A^{-1}=\matrixx{Nc/3+a}{-Nc/9+b+(b-a)/3}{Nc}{-Nc/3+d}\in\Gamma.\]
As a result, the actions of $A, W$ and $B$ induce an isomorphism of the modular curve $X_{\Gamma}$. To investigate
the action of $A,B$, we need the following result on the $K$ rational points of $E_{\bvarpi}$.

\begin{proposition}\label{cusp}We have
$E_{\bvarpi}(K)=\set{O,(0,\pm \frac{\bvarpi}{2})}$. 
\end{proposition}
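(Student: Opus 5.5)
We must show two things: that $E_{\bvarpi}(K)$ is finite, i.e.\ has rank $0$, and that its torsion subgroup is exactly $\set{O,(0,\pm\frac{\bvarpi}{2})}$. Torsion is the easier half, so we treat it first.

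\textbf{Torsion.} The three listed points are torsion of order $3$. The point $O$ and the two points with $x=0$ are visible directly, and $(0,\pm\bvarpi/2)$ are flexes: they are the $3$-torsion points with $x=0$ on $y^2=x^3+D/4$, where $D=\bar\varpi^{2i}$. To see there is nothing more, we reduce modulo small good primes $\fq\nmid 6p$ and compute $\sharp\tilde E(\BF_\fq)$ with Theorem \ref{cmcharacter}; the prime-to-$q$ torsion injects into each reduction.
\begin{itemize}
\item At $\fq=(2)$ ($\varpi_\fq=2\equiv 2\mod 3$, residue field $\BF_4$), the formula gives $\sharp\tilde E(\BF_4)=4+1+2\,\mathrm{Re}(\bar\chi\cdot 2)$ with $\chi$ a sixth root of unity. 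Since $2\mid D$ fails, we must check goodness: the model $y^2=x^3+D/4$ is bad at $2$, but a change of variables $y\mapsto y+1/2$ gives $y^2+y=x^3+(D-1)/4$, which is good at $2$ when $D\equiv 1\mod 4$. Here $\bar\varpi\equiv 1\mod 3$ can be normalized, but modulo $2$ the value of $D$ varies, so $\fq=(2)$ may not suffice by itself.
\item We therefore also use the prime above $7$, or more systematically the fact that $K$-torsion of a CM curve with $j=0$ is highly constrained. Known classifications, e.g.\ via the Hecke character $\psi$ having conductor divisible by $\varpi$, show that a $2$-torsion point would require $D/4$ to be a cube in $K$. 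That fails because $\bar\varpi^{2i}$ is not a cube, as $\bar\varpi$ is prime.
\item Further $3$-power torsion, or torsion of another prime order $\ell$, would force $\psi\equiv 1\mod \ell$ up to units on a congruence, which the Frobenius at $\fq\equiv 1\mod 3$ with nontrivial cubic symbol $(\varpi^i/\varpi_\fq)_3$ contradicts.
\end{itemize}
Concretely, we pick two good primes $\fq$ for which the values $\sharp\tilde E(\BF_\fq)$ have greatest common divisor $3$ away from residue characteristics.

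\textbf{Rank.} This is the main obstacle. We plan a $\sqrt{-3}$-descent, i.e.\ the $3$-isogeny descent over $K$ for the cubic twist family, using the isogeny $\phi_{\sqrt{-3}}$ with kernel $\set{O,(0,\pm\bvarpi/2)}$. The Selmer group of this $3$-isogeny embeds in $K(S,3)$ with $S=\set{\sqrt{-3},\fp,\bar\fp}$, and the Kummer map sends a point to the class of $y-\bvarpi/2$ (mod cubes). The local conditions are as follows.
\begin{itemize}
\item At $\fp$, $E$ has good reduction (twist by $\bar\varpi$ only), so the image is unramified.
\item At $\bar\fp$, the image is determined by the local Tate pairing.
\item At $\sqrt{-3}$, we use Proposition \ref{LCF}: the condition $p\equiv 4,7\mod 9$ determines the cubic symbols of $\varpi$ and of $3$ in the local field. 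This lets us rule out classes involving $\varpi$, $\bar\varpi$ and $\sqrt{-3}$ besides the image of torsion.
\end{itemize}
We expect the Selmer group to reduce to the image of torsion, with the dual isogeny side similarly trivial, so the rank is $0$. The $\sqrt{-3}$-adic local image computation is the delicate step.

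As an alternative for the rank, we could invoke the rank-$0$ BSD input: by Coates--Wiles/Rubin over $K$, nonvanishing of $L(\bar\psi,1)$ gives finiteness. The nonvanishing would follow from a root-number computation combined with congruences mod $\sqrt{-3}$. We would try the descent first.
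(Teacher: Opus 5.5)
\textbf{Verdict: genuine gap.} Your torsion half can be repaired, but the rank-zero half is not proved, and that is the substance of the statement. You set up a $\sqrt{-3}$-descent and then only \emph{expect} the Selmer group to equal the image of torsion. The local computation at $\sqrt{-3}$ that decides this is never carried out.

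The paper does no descent here. It takes rank $0$ from Monsky's theorem \cite{Monsky} that $\bar\varpi^i$ is not a sum of two cubes in $K$. It gets the torsion by twisting to $E_1$ over $K(\sqrt[3]{\bar\varpi})$, where $E_1(K(\sqrt[3]{\bar\varpi}))_{\tor}=E_1[3]$, and only the points with $x=0$ come from $K$-points.

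Your fallback does not close the gap either. A root number never gives $L(\bar\psi,1)\neq 0$, and the congruence you would need is not specified. Nor can you borrow the paper's nonvanishing $L(f,1)\neq0$ from Section \ref{c}: it is derived from Proposition \ref{torsion0}, whose proof relies on the present proposition.

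\textbf{How to fill the gap.} The missing computation is short once the local conditions are stated correctly. Put $\lambda=1-\omega$ and $U^{(n)}=1+\lambda^n\CO_{K,3}$, so $U^{(2)}=1+3\CO_{K,3}$ and $U^{(4)}\subset K_3^{\times3}$.

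At $\fp$ the condition is unramified, as you say. At $\bar\fp$ it is simply $\langle\bar\varpi\rangle$, since $|E(K_{\bar\fp})/\sqrt{-3}E(K_{\bar\fp})|=|E[\sqrt{-3}]|=3$; no Tate-pairing analysis is needed. Your list of classes to exclude also omits the unit $\omega$.

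The decisive place is $\lambda$, where the local image $W_\lambda$ has order $|E[\sqrt{-3}]|\cdot|\sqrt{-3}|_\lambda^{-1}=9$.
\begin{itemize}
\item It contains the image of $-T=(0,-\bar\varpi^i/2)$, namely the class of $\bar\varpi$. This lies in $U^{(2)}\setminus U^{(3)}$ exactly because $p\not\equiv1\bmod 9$.
\item A formal-group point with parameter $z$, $v_\lambda(z)=1$, maps to $1+\frac{\bar\varpi^i}{2}z^3$ modulo $U^{(4)}$, a nontrivial class in $U^{(3)}/U^{(4)}$.
\end{itemize}
Hence $W_\lambda=U^{(2)}K_3^{\times3}/K_3^{\times3}$. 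This contains no class $\omega^a(\sqrt{-3})^b\bar\varpi^c$ with $(a,b)\neq(0,0)$: $\sqrt{-3}$ is excluded by valuation, and $\omega\in U^{(1)}$ is not in $U^{(2)}K_3^{\times3}$. So the Selmer group is $\langle\bar\varpi\rangle$, already accounted for by torsion, and the $\CO_K$-rank is $0$. This, not Proposition \ref{LCF}, is where $p\equiv4,7\bmod 9$ enters.

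\textbf{Torsion.} Your treatment of $\fq=(2)$ is off. By Proposition \ref{minimal} the reduction at $2$ is good for every $p$; Theorem \ref{cmcharacter} simply does not cover $\fq\mid6$. Moreover $\sharp\tilde E(\BF_4)=\Norm(1-\pi)$ with $\pi\in2\CO_K^\times$, and it is divisible by $3$, so it equals $3$ or $9$.
\begin{itemize}
\item Odd torsion is therefore a cyclic $\lambda$-primary $\CO_K$-module. It equals $E[\lambda]$ unless it contains $E[3]$, which is impossible because $x^3=-\bar\varpi^{2i}$ has no root in $K$.
\item There is no $2$-torsion, because $-\bar\varpi^{2i}/4$ is not a cube in $K$.
\end{itemize}
No choice of auxiliary primes depending on $p$ is needed.
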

\begin{proof}
By Monsky's result \cite[Theorem 1.4]{Monsky}, $\bar\varpi^i$ is not a cube sum in $K$.
So, $E_{\bvarpi}(K)=E_{\bvarpi}(K)_{tor}$.

Let $\phi'$ be the twist map from $E_{\bvarpi}$ to $E_1$ given by $(x,y)\mapsto \lrb{\frac{x}{\sqrt[3]{{\bar\varpi}^{2i}}},\frac{y}{\bvarpi}}$. Then $\phi'(E_{\bvarpi}(K)_{\tor})\subset E_1(K(\sqrt[3]{\bar\varpi}))_{\tor}$. But one can prove that 
$$E_1(K(\sqrt[3]{\bar\varpi}))_{\tor}=E_1(K)=\set{O,\lrb{0,\pm 1/2},\lrb{-\omega^j,\pm \sqrt{-3}/2}}$$
with $j=0,1,2$ similar to \cite[Proposition 2.3]{SSY}. Then we can see 
$$E_{\bvarpi}(K)_{\tor}=E_{\bvarpi}[3](K)=\set{O,\lrb{0,\pm \frac{\bvarpi}{2}}}.$$ 
\end{proof}

\begin{proposition}\label{AB}
We have
\[f\mid_{A}=\omega f,\ \ \ f\mid_{B}=\omega^2 f.\]
For any $P\in X_{\Gamma}$
\begin{equation*}\label{BC}
 A\cdot (\varphi(P))=[\omega]\varphi(P),\ \ \ \ \ B\cdot(\varphi(P))=[\omega^2]\varphi(P).
\end{equation*}
The same formula also holds for $f^c$ and $\varphi^c$.
\end{proposition}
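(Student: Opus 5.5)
The identity $f\mid_A=\omega f$ follows from the $q$-expansion. For $B$ I would write it as a product of $W$ and $A^{-1}$ and use the pseudo-eigenvalue of $f$ under the Fricke involution. The statement about points would then come from integrating $f$, with one cusp fixed by $B$ controlling the translation constant. I normalise the weight-$2$ slash so that scalar matrices act trivially: $f\mid_\gamma(\tau)=\det(\gamma)(c\tau+d)^{-2}f(\gamma\tau)$.

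\emph{Step 1: $f\mid_A=\omega f$.} Since $A$ is translation by $1/3$, we get $f\mid_A(\tau)=f(\tau+1/3)=\sum a_n e^{2\pi i n/3}e^{2\pi i n\tau}$. As noted after \eqref{cg}, $a_n\neq 0$ only when $n\equiv 1\mod 3$, so every nonzero term is multiplied by $e^{2\pi i/3}=\omega$. Hence $f\mid_A=\omega f$. The same argument gives $f^c\mid_A=\omega f^c$, since $f^c$ has coefficients $\bar a_n$ supported on the same $n$. Consequently $f\mid_{A^{-1}}=\omega^2 f$ and $f^c\mid_{A^{-1}}=\omega^2 f^c$.

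\emph{Step 2: $f\mid_B=\omega^2 f$.} Because scalars act trivially, $f\mid_B=f\mid_W\mid_{A^{-1}}\mid_W$. The form $f$ is a newform on $\Gamma_0(N)$ with nebentypus $\xi$, since its conductor is $3\Norm(\fm)$. By Atkin--Li theory, $f\mid_W=\lambda f^\rho$ with $f^\rho=\sum\bar a_nq^n=f^c$, and likewise $f^c\mid_W=\lambda' f$. From $W^2=-N\cdot I$, which acts trivially, we get $\lambda\lambda'=1$. Then
\[f\mid_B=\lambda f^c\mid_{A^{-1}}\mid_W=\lambda\omega^2 f^c\mid_W=\lambda\lambda'\omega^2 f=\omega^2 f.\]

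\emph{Step 3: action on points.} Since $A$ and $B$ normalise $\Gamma$, they act on $X_\Gamma$. For $\tau\in\CH$ and $M\in\{A,B\}$, substituting gives
\[2\pi i\int_{i\infty}^{M\tau}f=2\pi i\int_{M^{-1}i\infty}^{\tau}f\mid_M = \epsilon_M z_\tau+c_M,\]
where $\epsilon_A=\omega$, $\epsilon_B=\omega^2$, and $c_M=2\pi i\int_{M^{-1}i\infty}^{i\infty}f\mid_M$. The lattice $L$ is an $\CO_K$-module, as one sees from the $A$-relation. Multiplication by $\omega$ on $\BC/L$ is the automorphism $[\omega]$, normalised via $(x,y)\mapsto(\omega x,y)$ up to the usual convention. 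Therefore $\varphi(MP)=[\epsilon_M]\varphi(P)+Q_M$ for a constant point $Q_M$.
\begin{itemize}
\item For $A$ we have $A\,i\infty=i\infty$, so $c_A=0$ and $Q_A=O$.
\item For $B$, the cusp $0$ is fixed by $B$. Evaluating at $P=0$ gives $Q_B=(1-[\omega^2])\varphi(0)$.
\end{itemize}
The image $\varphi(0)$ of a cusp is torsion by Manin--Drinfeld. If it is moreover $K$-rational, Proposition \ref{cusp} puts it in $\set{O,(0,\pm\bvarpi/2)}$. These points are fixed by $[\omega]$, so $(1-[\omega^2])\varphi(0)=O$, and we conclude $Q_B=O$. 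The same argument applies verbatim to $f^c$ and $\varphi^c$.

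The main obstacle I expect is showing that $\varphi(0)\in E_{\bvarpi}(K)$, i.e.\ that the cusp $0$ of $X_\Gamma$ and the map $\varphi$ are defined over $K$ compatibly. My first attempt is to note that $0=W\infty$ and that $W$ induces an automorphism of $X_\Gamma$ over $K$. Since $\infty$ is $\BQ$-rational, with $\varphi(\infty)=O$ on the $q$-expansion model, this would give rationality of the cusp $0$. The alternative is to compute the field of definition of the cusp $0$ via $\fU$, which should give $\BQ$ or $K$. A secondary, routine check is that the pseudo-eigenvalue relation in Step 2 really produces $f^c$ and not some other twist, using that $f$ is new of level $N$.
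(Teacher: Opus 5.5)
Your Steps 1--3 are the paper's argument. The paper gets $f\mid_A=\omega f$ from the support of the $a_n$, gets the $B$-action from $B=-\frac1N WA^{-1}W$ (your $\lambda\lambda'=1$ computation spells out what the paper leaves implicit), and gets $Q_B=(1-[\omega^2])\varphi(0)$ from the $B$-fixed cusp $0$. It then simply applies Proposition \ref{cusp} to $\varphi(0)$. So the step you flag is exactly the delicate one, and it is a genuine gap: neither proposed fix works, because the cusp $0$ of $X_\Gamma$ is \emph{not} $K$-rational.

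Work in the model where $\infty$ is rational, which is the one in which $\varphi$ is defined over $K$ with $q$-expansions in $K((q))$. There $\sigma_t\in\Gal(\BQ(\zeta_N)/\BQ)$ acts on the cusps above $0\in X_0(N)$ through diamond operators, $\sigma_t(0)=\langle t\rangle^{\pm1}\cdot 0$. The stabiliser $\pm\matrixx{1}{0}{Nk}{1}$ of $0$ in $\Gamma_0(N)$ lies in $\Gamma$, so the three cusps of $X_\Gamma$ over $0$ are distinct. They are permuted transitively by $\Gal(\BQ(\zeta_N)/K)$: take $t\equiv 1\bmod 3$ a non-cube mod $p$. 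Hence $0$ is defined over the cubic subfield of $\BQ(\zeta_p)$, not over $K$. Correspondingly, $W$ is not defined over $K$ on $X_\Gamma$, since its Galois conjugates differ from it by these diamond operators. The $\fU$-computation likewise returns that cubic field, not $\BQ$ or $K$.

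The conclusion $Q_B=O$ still holds if you evaluate at $\infty$ instead, using the cusp the paper itself writes down, $B\infty=3/N$.
\begin{itemize}
\item Since $\varphi(\infty)=O$, we have $Q_B=\varphi(3/N)$.
\item The cusp $3/N$ has denominator $N/3$. Galois acts on a cusp $\pm(a,c)$ by $c\mapsto t^{\pm1}c$, so $3/N$ is fixed by every $\sigma_t$ with $t\equiv 1\bmod 3$ and is defined over $K$.
\item By Proposition \ref{cusp}, $Q_B\in E_{\bar\varpi^i}(K)=\set{O,(0,\pm\bar\varpi^i/2)}=E[\sqrt{-3}]$.
\item As $1-\omega^2=-\omega\sqrt{-3}$, the relation $Q_B=(1-[\omega^2])\varphi(0)$ gives $[3]\varphi(0)=O$.
\item The points of $E[3]$ outside $E[\sqrt{-3}]$ satisfy $x^3=-\bar\varpi^{2i}$, so they generate $K(\sqrt[3]{\bar\varpi})$. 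This field is not normal over $\BQ$, hence not contained in $\BQ(\zeta_N)$, over which $\varphi(0)$ is defined (as are all cusps of $X_1(N)$).
\item Therefore $\varphi(0)\in E[\sqrt{-3}]$, which is fixed by $[\omega]$, and $Q_B=O$.
\end{itemize}
The same argument with $K(\sqrt[3]{\varpi})$ handles $\varphi^c$. It also supplies the $K$-rationality of $\varphi(0)$ that Proposition \ref{torsion0} later assumes.
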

\begin{proof}
By \cite[Section 12.3]{Iwanbook}, $f\mid_{W}$ is a multiplicity of $f^c$. From (\ref{def}), we can see that $a_n\neq 0$ if and only if $n\equiv 1\mod 3$. So we know that $f\mid_A=\omega f$ and $f^c\mid_A=\omega f^c$. Then the action of $W$ and $A$ stabilize $I_{f}(J_{\Gamma})$. As a result, $W$ and $A$ induces an automorphism of $A_f$. Clearly, $A$ acts as the complex multiplication $[\omega]$ on $A_f$ while $W$ interchanges $E_{\bar\varpi^i}$ and $E_{\varpi^i}$.

From the action of $W$ and $A$, we know that $B$ induces the automorphisms of $E_{\bar\varpi^i}$ and $E_{\varpi^i}$ as genus one curves. By \cite[Page 71]{Silvermanbook1}, every automorphism of $E_{\bar\varpi^i}$ as curves has the form $Z\mapsto aZ+b$ with $a\in\set{\pm 1,\pm\omega,\pm\omega^2}$ and $b\in E_{\bar\varpi^i}$. We see that for any $P\in X_{\Gamma}$
\begin{equation*}
A\cdot (\varphi(P))=[\omega]\varphi(P),\ \ \ \ \ B\cdot(\varphi(P))=[\omega^2]\varphi(P)+\varphi([3/N])
\end{equation*}
where $\varphi([3/N])$ is obtained by setting $P=[\infty]$. But if let $P=[0]$, we get 
$$\varphi([3/N])=\varphi([0])-[\omega^2]\varphi([0])=O$$ 
by Proposition \ref{cusp}. Thus we have
\begin{equation*}
 A\cdot (\varphi(P))=[\omega]\varphi(P),\ \ \ \ \ B\cdot(\varphi(P))=[\omega^2]\varphi(P)
\end{equation*}
for any $P\in X_{\Gamma}$. 
\end{proof}

\section{The CM point}
Let $N$ be the level of the modular form $f$ as in section 2. Then
\[N=\begin{cases} 9p,& \text{if}\ \ p^i\equiv 4\mod 9,\\   27p,& \text{if}\ \ p^i\equiv 7\mod 9.\end{cases}\]
Let
\[M=\matrixx{0}{-1}{3}{3r},\]
where $r\in \BZ$. We choose the CM point to be $$\tau_r=M\omega=\frac{-1}{3\omega+3r}.$$ Then we have the normalized embedding $\iota_1$ of $K$ into the $2\times 2$ matrix algebra $M_2(\BQ)$ in the sense of \cite{Shimurabook} such that

\begin{equation}\label{embedding}
\iota_1(\omega)=M\matrixx{-1}{-1}{1}{0}M^{-1}=\matrixx{-r}{-1/3}{3(r^2-r+1)}{r-1},
\end{equation}
then
\[\iota_1(\omega^2)=\matrixx{r-1}{1/3}{-3(r^2-r+1)}{-r}.\]
The CM point $\tau_r$ is fixed by the fractional transformation fo $\iota_1(\omega)$. From now on,
we always set $r^2-r+1\equiv 0 \mod 3p$ (one can check this is not an empty condition since $p\equiv 1\mod 3$) and set
\[t=\frac{r^2-r+1}{3p}.\]
Since $r^2-r+1\equiv 0\mod 3$, we have $r\equiv 2\mod 3$, i.e., $r\equiv 2,5,8\mod 9$. We can check that in all cases, $t\equiv 1\mod 3$. Then we have the following decompositions of matrices.

If $N=9p$,
\begin{equation}\label{omega1}
A\iota_1(\omega)=\matrixx{-r+3pt}{\frac{(r-2)}3}{9pt}{r-1}\in\Gamma_0(N),
\end{equation}
\begin{equation}\label{omega0}
\iota_1(\omega)A=\matrixx{-r}{\frac{-(r+1)}3}{9pt}{3pt+r-1}\in\Gamma_0(N).
\end{equation}
If $N=27p$,
\begin{equation}\label{omega2}
B^2A\iota_1(\omega)=\matrixx{-r+3pt}{\frac{(r-2)}3}{\frac{(t-2r)N}3+2ptN}{r-1+\frac{2(r-2)N}9}\in\Gamma_0(N).
\end{equation}
\begin{equation}\label{omega3}
\iota_1(\omega)AB^2=\matrixx{\frac{-2(r+1)N}9-r}{\frac{-(r+1)}3}{9p(2r^2+t)}{r^2}\in\Gamma_0(N).
\end{equation}

We also consider the CM point $$W(\tau_r)=\frac{3\omega+3r}{N}$$ where $W$ is the Atkin-Lehner involution. Then $W(\tau_r)$ is fixed by the matrix
\begin{equation}\label{embedding1}
\iota_2(\omega)=\matrixx{r-1}{-\frac{3(r^2-r+1)}{N}}{\frac{N}{3}}{-r}.
\end{equation}
Similarly, we have the following decompositions.

 If $N=9p$, 
 \begin{equation}\label{omega02}
B^2\iota_2(\omega)=\matrixx{r-1}{-\frac{3(r^2-r+1)}{N}}{\frac{(2r-1)N}{3}}{-2r^2+r-2}\in\Gamma_0(N).
\end{equation}

\begin{equation}\label{omega12}
\iota_2(\omega)B^2=\matrixx{2r^2+3r-3}{-\frac{3(r^2-r+1)}{N}}{\frac{(1-2r)N}{3}}{-r}\in\Gamma_0(N).
\end{equation}

If $N=27p$, 
\begin{equation}\label{omega22}
B^2A\iota_2(\omega)=\matrixx{\frac{N}9+r-1}{\frac{-t-r}3}{\frac{(2r-1)N}3+\frac{2N^2}{27}}{\frac{-2rN}9 - 2r^2 + r - 2}\in\Gamma_0(N),
\end{equation}
\begin{equation}\label{omega32}
\iota_2(\omega)AB^2=\matrixx{\frac{2rN}{9} - 2r^2 - \frac{2N}{9} + 3r - 3}{\frac{(-t+r-1)}{3}}{\frac{(1-2r)N}{3}+\frac{2N^2}{27}}{\frac{N}{9}-r}\in\Gamma_0(N).
\end{equation}

The embedding $\iota_1$ can be extended to an embedding $\iota_1'$ of $K\otimes \BZ_p$ into $M_2(\BQ_p)$ such that 
$$\iota_1((a+b\omega)\otimes c)=\lrb{\matrixx{a}{0}{0}{a}+\matrixx{b}{0}{0}{b}\iota_1(\omega)}\matrixx{c}{0}{0}{c},$$
where $a,b\in\BQ$ and $c\in\BZ_p$. We can also extend $\iota_1$ to an embedding $\iota_1''$ of $K\otimes \widehat{\BZ}$ into $M_2(\widehat{\BQ})$ in the same way. The composition of $\iota_1''$ with the inverse of the isomorphism $\iota:K\otimes\widehat{\BZ}\ra \widehat{K}=\prod'_{\fq\leq\infty}K_\fq$ gives the embedding $\widehat{K}\hookrightarrow M_2(\widehat{\BQ})$.
We will chase the following commutative diagram:
\begin{equation}\label{embedding}
\xymatrix{\widehat{K}\ar[r]^{\iota^{-1}}& K\otimes\widehat{\BZ}\ar@{^{(}->}^{\iota_1''}[r]& M_2(\widehat{\BQ}) \\ K_\varpi\oplus K_{\bar\varpi}\ar[r]^{\iota_p^{-1}}\ar@{^{(}->}[u]& K\otimes_{\BZ}\BZ_p\ar@{^{(}->}^{\iota_1'}[r]\ar@{^{(}->}[u]&M_2(\BQ_p)\ar@{^{(}->}[u]\\ &K\ar@{^{(}->}[u]\ar@{^{(}->}[r]^{\iota_1}&M_2(\BQ)\ar@{^{(}->}[u]}
\end{equation}
Obviously, the above is also true for $\iota_2$.

Recall $r\in\BZ$ is a solution of $r^2-r+1\equiv 0\mod 3p$. Write $\varpi=u+v\omega$, there is a unique cubic root of unity in $\BZ_p$ which we denote $\alpha$ such that  $p|(u+v\alpha)$ in $\BZ_p$. The condition $-r\equiv \omega^2\mod\varpi$ for $r$ is equivalent to $-r\equiv \alpha^2\mod p$ in $\BZ_p$ while $-r\equiv \omega\mod\varpi$ for $r$ is equivalent to $-r\equiv \alpha\mod p$ in $\BZ_p$. We will also write $\tau$ for $[\tau,1]$ on the adelic modular curve $X_\fU$.

\begin{theorem}\label{cmpoint}
Let $r\in\BZ$ such that $r^2-r+1\equiv 0\mod 3p$. If $-r\equiv \omega^2\mod\varpi$, 
then $\varphi(\tau_r)$ is defined over $K(\sqrt[3]{\varpi})$ and 
\[\sigma_{\omega_\fp}(\varphi(\tau_r))=\begin{cases}[\omega^2]\varphi(\tau_r),& \text{if}\ \ N=9p;\\ [\omega]\varphi(\tau_r),& \text{if}\ \ N=27p. \end{cases}\]
If $-r\equiv \omega\mod\varpi$, then $\varphi(\tau_r)$ is defined over $K$ so is a $\sqrt{-3}$ torsion point.
\end{theorem}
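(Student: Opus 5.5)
We use Shimura's reciprocity law on the adelic curve $X_\fU$. For $t\in\wh K^\times$ and the CM point $\tau_r=[\tau_r,1]$ with normalized embedding $\iota_1$, it gives
\[\sigma_t([\tau_r,1])=[\tau_r,\iota_1''(t)^{-1}]\]
(or $\iota_1''(\bar t)$, depending on normalization). Since $\varphi$ is defined over $K$, this lets us compute $\sigma_t(\varphi(\tau_r))=\varphi(\sigma_t[\tau_r,1])$. The plan has three steps.

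\textbf{Step 1: the field of definition.} By class field theory, $[\tau_r,1]$ is defined over the abelian extension of $K$ cut out by $K^\times\,\iota_1''^{-1}(\fU)$. We determine which $t\in\wh K^\times$ satisfy $\iota_1''(t)\in\fU$.
\begin{itemize}
\item At places $q\nmid 3p$, the local component of $\fU$ is $\GL_2(\BZ_q)$, and $\iota_1''(\CO_{K,q}^\times)$ lies in it because $\iota_1(\omega)$ is $q$-integral.
\item At $p$, the lower-right entry of $\iota_1'(a+b\omega)$ is $a+b(r-1)$ and the lower-left entry is divisible by $p$. Via $K\otimes\BZ_p\cong K_\varpi\oplus K_{\bar\varpi}$, the condition $-r\equiv\omega^2\mod\varpi$ means $r-1\equiv\omega\mod\varpi$. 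So the lower-right entry is congruent mod $p$ to the $\varpi$-component of $a+b\omega$, or to the $\bar\varpi$-component under the other choice of $r$. Hence $\iota_1''(t)\in\fU$ at $p$ exactly when that component is a cube mod $p$. This gives $\sigma_{\omega_\fp}$ as the generator of the relevant quotient, matching Proposition~\ref{LCF}(3), while the $(1+\varpi\CO_{K,\varpi})$ part acts trivially.
\item At $3$, we do not get containment directly. Instead we use the decompositions \eqref{omega1}--\eqref{omega3}: $A\iota_1(\omega)$ (resp. $B^2A\iota_1(\omega)$) lies in $\Gamma_0(N)$, with lower-right entry $r-1$ or $r-1+2(r-2)N/9$, which is a cube mod $p$ in the appropriate case.
\end{itemize}
Since $U_{3\varpi}$ fixes $\sqrt[3]{\varpi}$ (Proposition~\ref{LCF}(2)), and Proposition~\ref{AB} shows $A,B$ act on $\varphi$ by the automorphisms $[\omega],[\omega^2]$ rather than trivially, units at $3$ act on $\varphi(\tau_r)$ only through automorphisms. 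Combined with the $p$-part, this shows $\varphi(\tau_r)$ is fixed by everything fixing $K(\sqrt[3]{\varpi})$.

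\textbf{Step 2: the action of $\sigma_{\omega_\fp}$.} Write $\omega_\fp=\omega\cdot(\omega^{-1})_{\text{away from }\fp}$ using the global element $\omega\in K^\times$. Then $\sigma_{\omega_\fp}$ acts on $[\tau_r,1]$ through the inverse of $\omega$ at all places except $\fp$, including $3$ and $\bar\fp$. At $\bar\fp$, $\omega^{-1}$ corresponds to a cube-compatible element, so only the place $3$ contributes a nontrivial matrix. That contribution is $\iota_1(\omega)^{-1}$ localized at $3$. Using \eqref{omega1} (resp. \eqref{omega2}), we rewrite it as $\gamma A^{\pm1}$ (resp. $\gamma (B^2A)^{\pm1}$) with $\gamma\in\fU$ at $3$. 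Proposition~\ref{AB} then converts the action into $[\omega]^{\mp1}$ when $N=9p$, and into $[\omega^2\cdot\omega^2\cdot\omega]^{\mp1}=[\omega^{\mp 2}]$ when $N=27p$. This should produce $[\omega^2]$ for $N=9p$ and $[\omega]$ for $N=27p$; the two cases differ because $B^2$ contributes an extra $\omega^4=\omega$.

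\textbf{Step 3: the case $-r\equiv\omega\mod\varpi$.} Here the $p$-condition is on the $\bar\varpi$-component, so $\omega_\fp$ acts trivially at $p$. The resulting field is contained in $H_{3\varpi}$ with no $\sqrt[3]{\varpi}$ piece, and the same computation shows $\varphi(\tau_r)\in E_{\bvarpi}(K)$. Proposition~\ref{cusp} then gives $\varphi(\tau_r)\in\{O,(0,\pm\bvarpi/2)\}$, which are $\sqrt{-3}$-torsion.

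The main obstacle is the bookkeeping in Step 2: fixing the sign convention of Shimura reciprocity (whether $t$ or $t^{-1}$ appears, and $\iota(t)$ versus $\iota(\bar t)$), and tracking the order of $A,B$ on the correct side, since these determine whether the answer is $[\omega]$ or $[\omega^2]$. To pin these down, I would first check the conventions against the known value $\varphi(\text{cusp})$, or against Proposition~\ref{LCF}(3), before carrying out the full computation.
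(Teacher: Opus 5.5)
Your route is the paper's: Shimura reciprocity on $X_\fU$, the $p$-adic computation of $\iota_1''$, trading $\sigma_{\omega_\fp}$ for $\sigma_{\omega_3}^{-1}$ through the global unit $\omega$, then \eqref{omega1}, \eqref{omega2} and Proposition \ref{AB}. However, Step 3 fails.

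\textbf{The gap in Step 3.} When $-r\equiv\omega\mod\varpi$, the cube condition at $p$ falls on the $\bar\varpi$-component. So the ideles acting trivially on $[\tau_r,1]$ form $K^\times U'$ with $U'=(\BZ_3+3\CO_{K,3})^\times\,\CO_{K,\varpi}^\times(\CO_{K,\bar\varpi}^\times)^3\prod_{v\nmid 3p}\CO_{K,v}^\times$.
\begin{itemize}
\item Its class field is $K(\sqrt[3]{\bar\varpi})$. This is ramified at $\bar\varpi$, so it is not inside $H_{3\varpi}$, and it is a genuine cubic extension of $K$.
\item Its generator $\sigma_{\omega_{\bar\fp}}=\sigma_{\omega_3}^{-1}$ acts on $[\tau_r,1]$ through $A^{-1}$ (resp.\ $(B^2A)^{-1}$), exactly as in the first case. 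Hence it acts on $\varphi(\tau_r)$ by a nontrivial automorphism $[\omega^{\pm1}]$.
\end{itemize}
So ``the same computation'' only places $\varphi(\tau_r)$ over $K(\sqrt[3]{\bar\varpi})$ with a twisted Galois action. It does not give $\varphi(\tau_r)\in E_{\bar\varpi^i}(K)$.

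The missing ingredient is arithmetic, and the paper supplies it as follows. The twist $(x,y)\mapsto(\sqrt[3]{\bar\varpi^{2i}}x,\bar\varpi^i y)$ turns $\varphi(\tau_r)$ into a $K$-point of $E_{\bar\varpi^{2i}}$. Proposition \ref{cusp} (Monsky's theorem plus the torsion computation) says such a point is $O$ or $(0,\pm\bar\varpi^{2i}/2)$. Pulling back gives $\varphi(\tau_r)\in\{O,(0,\pm\bar\varpi^i/2)\}$. These points are $K$-rational only a posteriori, as fixed points of $[\omega]$.

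The first case shows this input cannot be skipped. There the same kind of Galois behaviour makes $\phi(\varphi(\tau_r))$ a $K$-point of $E_{p^i}$, a curve of positive rank, and the point turns out to be non-torsion.

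\textbf{Repairs needed in Steps 1--2.}
\begin{itemize}
\item At $3$ you do get containment for the subgroup that matters. If $3\mid b$, then $\iota_1(a+b\omega)$ has upper-right entry $-b/3\in\BZ_3$ and lower-left entry $3b(r^2-r+1)\in 27\BZ_3$, so $(\BZ_3+3\CO_{K,3})^\times$ maps into $\fU_3$. Together with your $p$-adic analysis this gives $\iota_1''(U)\subset\fU$ for $U=(\BZ_3+3\CO_{K,3})^\times(\CO_{K,\varpi}^\times)^3\prod_{v\nmid 3\varpi}\CO_{K,v}^\times$. By Proposition \ref{LCF}, $K^\times U$ is the norm group of $K(\sqrt[3]{\varpi})$. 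That, not ``units at $3$ act through automorphisms'', is what shows everything fixing $K(\sqrt[3]{\varpi})$ fixes the point.
\item Your claim that $r-1$ (or $r-1+2(r-2)N/9$) is a cube mod $p$ is false. It is $\equiv\alpha$ or $\alpha^2\mod p$, which is a non-cube since $9\nmid p-1$.
\item That claim is also unnecessary. The adele to use is $A\cdot\iota_1(\omega)_3$ (resp.\ $B^2A\cdot\iota_1(\omega)_3$). Its $3$-component lies in $\Gamma_0(N)$ and carries no cube condition; its $p$-component is $A$ (resp.\ $B^2A$, with lower-right entry $1+6p$). Hence $[\tau_r,\iota_1(\omega)_3]=[A\tau_r,1]$ (resp.\ $[B^2A\tau_r,1]$). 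The rational matrix sits on the left, not in the form $\gamma A^{\pm1}$.
\item You leave the exponent open. A proof must fix one normalization of Shimura's law, compatible with the Artin map used in Proposition \ref{LCF}. The paper's choice is that $\sigma_{\omega_\fp}=\sigma_{\omega_3}^{-1}$ acts through $A^{-1}$ (resp.\ $(B^2A)^{-1}$), which yields $[\omega^2]$ (resp.\ $[\omega]$).
\end{itemize}
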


\begin{proof}
The isomorphism $\iota_p: K\otimes_{\BZ}\BZ_p\cong K_\varpi\oplus K_{\bar\varpi}$ which is compatible with the diagonal embedding $K\hookrightarrow \widehat{K}^\times$ is given by $\omega\otimes 1\mapsto (\omega,\omega)$ and $1\otimes (u+v\alpha) \mapsto (\varpi,\bar\varpi)$ since $p$ are both uniformizers in $K_{\varpi}$ and $K_{\bar\varpi}$. As a result, $\iota_p(1\otimes\alpha)=(\omega,\bar\omega)$ and $\iota_p^{-1}((1,\omega))=(\omega^2\otimes 1)\cdot(1\otimes \alpha)$, $\iota_p^{-1}((\omega,1))=(\omega\otimes 1)\cdot(1\otimes \alpha)$. So 

\[\iota''_1\circ\iota^{-1}(\omega_{\bar\fp})= \alpha\iota'_1(\omega^2\otimes 1)=\matrixx{(r-1)\alpha}{\alpha/3}{-3(r^2-r+1)\alpha}{-r\alpha},\]

\[\iota''_1\circ\iota^{-1}(\omega_{\fp})= \alpha\iota'_1(\omega\otimes 1)=\matrixx{(-r)\alpha}{-\alpha/3}{3(r^2-r+1)\alpha}{(r-1)\alpha}.\]

If $-r\equiv \omega^2\mod\varpi$, then $-r\alpha\equiv 1\mod p$, we know that $\iota''_1\circ\iota^{-1}(\omega_{\bar\fp})\in \fU_{p}$ while $\iota''_1\circ\iota^{-1}(\omega_{\fp})\notin \fU_{p}$. 
Then the compact subgroup of $\widehat{K}$
\[U=(\BZ_3+3\CO_{K,3})^\times(\CO_{K,\varpi}^\times)^3\prod_{v\nmid 3\varpi}\CO_{K,v}^\times \]
satisfies $\iota''_1\iota^{-1}(U)\subset\fU$ where $(\CO_{K,\varpi}^\times)^3$ means cube elements in $\CO_{K,\varpi}^\times$. By the Shimura reciprocity law, $[\tau_r,1]$ is defined over the abelian extension of $K$ with Galois group $\sigma(K\widehat{\CO}_K^\times/KU)=\langle \sigma_{\omega_\fp}\rangle$ which is exactly $K(\sqrt[3]{\varpi})$ by Proposition \ref{LCF}. 

By the Hilbert reciprocity law, the actions of $\sigma_{\omega_3}$ and $\sigma_{\omega_{\fp}}$ on $\tau$ are inverse to each other, so we can study the action of $\sigma_{\omega_{\fp}}$ through $\sigma_{\omega_3}$.
By \eqref{omega1} and \eqref{omega2}, if $N=9p$,
\[A\iota_1(\omega)_3=\lrb{(A\iota_1(\omega))_3,A}\in \fU;\]
if $N=27p$,
\[B^2A\iota_1(\omega)_3=\lrb{(B^2A\iota_1(\omega))_3,B^2A}\in \fU.\]
By the Shimura reciprocity law,  
\[\sigma_{\omega_\fp}(\varphi(\tau))=\sigma_{\omega_3}^{-1}(\varphi(\tau))
=\begin{cases}A^{-1}\cdot\varphi(\tau),& \text{if}\ N=9p;\\ (B^2A)^{-1}\cdot\varphi(\tau),& \text{if}\ N=27p. \end{cases}\]
Now, the result follows from Proposition \ref{AB}.

If $-r\equiv \omega\mod\varpi$, then $(r-1)\alpha\equiv 1\mod p$, we know that $\iota''_1\circ\iota^{-1}(\omega_{\bar\fp})\notin \fU_{p}$ while $\iota''_1\circ\iota^{-1}(\omega_{\fp})\in \fU_{p}$. Similarly, $\varphi(\tau)$ is defined over $K(\sqrt[3]{\bar\varpi})$ and the Galois action is given by
\[\sigma_{\omega_{\bar\fp}}(\varphi(\tau))=\sigma_{\omega_3}^{-1}(\varphi(\tau))
=\begin{cases}A^{-1}\cdot\varphi(\tau),& \text{if}\ N=9p;\\ (B^2A)^{-1}\cdot\varphi(\tau),& \text{if}\ N=27p. \end{cases}\]
So $\varphi(\tau_r)$ is twisted to a point on $E_{\bar\varpi^{2i}}$ defined over $K$ under the map $(x,y)\mapsto \lrb{{\sqrt[3]{{\bar\varpi}^{2i}}}x,{\bvarpi}y}$. By Proposition \ref{cusp}, $\varphi(\tau_r)$ is $\sqrt{-3}$ torsion and defined over $K$.
\end{proof}
Similar proof using \eqref{omega02} and \eqref{omega22}  gives the following theorem.
\begin{theorem}\label{cmpoint1}
Let $r\in\BZ$ such that $r^2-r+1\equiv 0\mod 3p$. If $-r\equiv \omega\mod\varpi$, 
then $\varphi(W(\tau_r))$ is defined over $K(\sqrt[3]{\varpi})$ and 
\[\sigma_{\omega_\fp}(\varphi(\tau_r))=\begin{cases}[\omega^2]\varphi(\tau_r),& \text{if}\ \ N=9p;\\ [\omega]\varphi(\tau_r),& \text{if}\ \ N=27p. \end{cases}\]
If $-r\equiv \omega^2\mod\varpi$, then $\varphi(W(\tau_r))$ is defined over $K$ so is a $\sqrt{-3}$ torsion point.
\end{theorem}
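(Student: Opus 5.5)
The plan is to repeat the proof of Theorem \ref{cmpoint} with the embedding $\iota_2$ of \eqref{embedding1} in place of $\iota_1$; $\iota_2$ is the normalized embedding fixing $W(\tau_r)=\frac{3\omega+3r}{N}$. The statement should be read with $W(\tau_r)$ in place of $\tau_r$ in the Galois formula.

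\textbf{Step 1: local computation at $p$.} Using the isomorphism $\iota_p$ from the proof of Theorem \ref{cmpoint}, I compute
\[\iota_2''\circ\iota^{-1}(\omega_{\fp})=\alpha\,\iota_2'(\omega\otimes 1)=\matrixx{(r-1)\alpha}{-3(r^2-r+1)\alpha/N}{N\alpha/3}{-r\alpha},\]
and similarly $\iota_2''\circ\iota^{-1}(\omega_{\bar\fp})=\alpha\,\iota_2'(\omega^2\otimes 1)$. The essential difference from $\iota_1$ is that $\iota_2(\omega)$ has lower right entry $-r$ rather than $r-1$, so the roles of the two congruence conditions on $r$ are swapped.

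\textbf{Step 2: decide which element lies in $\fU_p$.} Membership in $\fU_p$ is governed by whether the lower right entry, modulo $p$, is a cube, in practice whether it is $\equiv 1$. For $\iota_2(\omega)$ the lower right entry is $-r$, and for $\iota_2(\omega^2)=-1-\iota_2(\omega)$ it is $r-1$.
\begin{itemize}
\item If $-r\equiv\omega\bmod\varpi$, i.e. $-r\equiv\alpha\bmod p$, then $(r-1)\alpha\equiv 1\bmod p$, using $\alpha^2+\alpha+1=0$. Hence $\iota_2''\iota^{-1}(\omega_{\bar\fp})\in\fU_p$ and $\iota_2''\iota^{-1}(\omega_\fp)\notin\fU_p$.
\item If $-r\equiv\omega^2\bmod\varpi$, the situation reverses.
\end{itemize}
The point I expect to need care is checking exactly which of these lies in $\fU_p$, including the lower left entry condition $N\mid c$, which holds since $N\alpha/3\cdot$ (the $p$-part) is fine. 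The cube condition is the real content here.

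\textbf{Step 3: field of definition.} With the same compact subgroup $U=(\BZ_3+3\CO_{K,3})^\times(\CO_{K,\varpi}^\times)^3\prod_{v\nmid3\varpi}\CO_{K,v}^\times$, I check $\iota_2''\iota^{-1}(U)\subset\fU$. At $3$, elements of $\BZ_3+3\CO_{K,3}$ map to matrices $a+3b\,\iota_2(\omega)$, whose lower left entry $bN$ is divisible by $N$. At $\varpi$, cubes of units give cube determinants and lower right entries in $\fU_p$. Shimura reciprocity then shows $\varphi(W(\tau_r))$ is defined over the field cut out by $\langle\sigma_{\omega_\fp}\rangle$, which is $K(\sqrt[3]{\varpi})$ by Proposition \ref{LCF}.

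\textbf{Step 4: Galois action.} By Hilbert reciprocity, $\sigma_{\omega_\fp}=\sigma_{\omega_3}^{-1}$ on this point. By \eqref{omega02} (case $N=9p$) and \eqref{omega22} (case $N=27p$), the element $\iota_2(\omega)_3$ is moved into $\fU$ by $B^2$, respectively $B^2A$. So $\sigma_{\omega_\fp}$ acts as $B^{-2}$, respectively $(B^2A)^{-1}$.

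By Proposition \ref{AB}, $B^{-2}$ acts as $[\omega^{-4}]=[\omega^2]$, and $(B^2A)^{-1}$ acts as $[\omega^{-1}\omega^{-4}]=[\omega]$. This gives the stated formula: $[\omega^2]$ for $N=9p$ and $[\omega]$ for $N=27p$.

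\textbf{Step 5: the reversed case.} If $-r\equiv\omega^2\bmod\varpi$, the same argument shows the point lies over $K(\sqrt[3]{\bar\varpi})$, with $\sigma_{\omega_{\bar\fp}}$ acting by $[\omega^{\pm1}]$. Twisting by $(x,y)\mapsto(\sqrt[3]{\bar\varpi^{2i}}x,\bar\varpi^i y)$ then produces a $K$-rational point. Proposition \ref{cusp} forces this to be $\sqrt{-3}$-torsion, so $\varphi(W(\tau_r))$ itself is $\sqrt{-3}$-torsion and defined over $K$, exactly as at the end of the proof of Theorem \ref{cmpoint}.
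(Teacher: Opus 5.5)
Your proposal is correct and is essentially the paper's own argument: the paper proves Theorem~\ref{cmpoint1} by rerunning the proof of Theorem~\ref{cmpoint} with $\iota_2$ and the decompositions \eqref{omega02}, \eqref{omega22}, as you do, and reading the Galois formula with $W(\tau_r)$ in place of $\tau_r$ is the intended interpretation. One point worth making explicit in your Step 3: when $-r\equiv\omega\bmod\varpi$, the lower-right entry of $\iota_2''\iota^{-1}(x)$ modulo $p$ is the reduction of the $\varpi$-component of $x$, so $\iota_2''\iota^{-1}\bigl((\CO_{K,\varpi}^\times)^3\times\CO_{K,\bar\varpi}^\times\bigr)\subset\fU_p$ holds for the whole group at once, not just for $\omega_{\bar\fp}$, and this is what the inclusion $\iota_2''\iota^{-1}(U)\subset\fU$ needs.
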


\section{Cubic root of the modular functions}

Assume the modular parametrization in \eqref{modularpara}
$$\varphi: X_1(N)\longrightarrow E_{\bar{\varpi}^i}:y^2=x^3+\bar{\varpi}^{2i}/4$$
is realized by the modular functions $x(z), y(z)$, that is $\varphi(z)=(x(z),y(z))$. Then the parametrization
$$\varphi^c: X_1(N)\longrightarrow E_{{\varpi}^i}:y^2=x^3+\varpi^{2i}/4$$ is realized by the modular functions $x^c(z), y^c(z)$, where $x^c(z)$ and $y^c(z)$ are obtained from $x(z)$ and $y(z)$ by taking the complex conjugate of the Fourier coefficients. The functions $x(z)$ and $x^c(z)$ are of level $\Gamma_1(N)$ while $y(z)$ and $y^c(z)$ are of level $\Gamma_0(N/3)$ (proved below). In this section we will investigate the property of some cubic roots of these modular functions. Using these properties, we will also prove that the images of some cusps and CM points are primitive $\sqrt{-3}$ torsion points on $E_{\bar{\varpi}^i}$ and $E_{{\varpi}^i}$. For this we first need to study some period lattices of $f$. Recall the period lattice $L$ is defined in \eqref{L}.

\begin{lemma}\label{cm}
The lattice $L$ has complex multiplication by $\CO_K$.
\end{lemma}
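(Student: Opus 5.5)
Since $L$ is a lattice in $\BC$ containing $\BZ$-linear combinations of periods, it suffices to show $\omega L\subseteq L$: then $\CO_K=\BZ[\omega]$ acts on $L$, and since $\mathrm{End}(\BC/L)$ is an order in $K$ containing $\omega$, the endomorphism ring is exactly $\CO_K$. To get $\omega L\subseteq L$, I will use the operator $A=\matrixx{1}{1/3}{0}{1}$ and Proposition \ref{AB}, which gives $f|_A=\omega f$, i.e. $f(z+1/3)=\omega f(z)$. This is immediate from the Fourier expansion \eqref{def}: $a_n\neq 0$ only when $n\equiv 1\mod 3$, so $e^{2\pi i n/3}=\omega$ for every contributing $n$.

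Take a period $\lambda=2\pi i\int_{i\infty}^{\gamma\infty}f(\tau)\,d\tau$ with $\gamma\in\Gamma_1(N)$. Substituting $\tau=z+1/3$ gives
\[\omega\lambda=2\pi i\int_{i\infty}^{\gamma\infty}f(\tau+1/3)\,d\tau=2\pi i\int_{i\infty}^{A\gamma\infty}f(\tau)\,d\tau ,\]
using $A(i\infty)=i\infty$. Put $\gamma'=A\gamma A^{-1}$. The paper's computation of $A\gamma A^{-1}$ shows that conjugation by $A$ preserves $\Gamma_0(N)$; I will check directly, in the same way, that it preserves $\Gamma_1(N)$. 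The lower-right entry becomes $d-Nc/3\equiv d\mod N/3$, and a short recomputation should give $d\equiv 1\mod N$ when $9\mid N$ and $c\equiv 0\mod N$; if not, I fall back on $\Gamma$ instead of $\Gamma_1(N)$. Then $A\gamma\infty=\gamma'A\infty=\gamma'\infty$, so $\omega\lambda$ equals the period of $f$ along $\gamma'\in\Gamma_1(N)$, which lies in $L$.

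The main obstacle is bookkeeping: one needs the precise definition of $L$ to be stable under this conjugation, and the integral must be independent of path, which holds because $f$ is a cusp form and $\gamma\mapsto\int_{i\infty}^{\gamma\infty}$ is a homomorphism. If $\Gamma_1(N)$ turns out not to be normalized by $A$ because of a level-$3$ subtlety, I will instead write $\omega\lambda$ as the period along $A\gamma A^{-1}$ and use that $\Gamma_1(N)$ and $\Gamma$ give the same period lattice up to the $\Gamma/\Gamma_1(N)$ action, which is trivial on $f$ since the Nebentypus is trivial on $\Gamma$. Either route gives $\omega L\subseteq L$, and hence $\CO_K L\subseteq L$.
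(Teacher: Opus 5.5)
Your reduction to $\omega L\subseteq L$, the identity $f(z+1/3)=\omega f(z)$, and the computation $\omega\lambda(\gamma)=\lambda(A\gamma A^{-1})$, where $\lambda(g)=2\pi i\int_{i\infty}^{g\infty}f$, are all fine. The step that fails is $A\gamma A^{-1}\in\Gamma_1(N)$. For $\gamma=\matrixx{a}{b}{Nc}{d}\in\Gamma_1(N)$, the lower-right entry of $A\gamma A^{-1}$ is $d-Nc/3\equiv 1-(N/3)c \mod N$. This is $\not\equiv 1\mod N$ whenever $3\nmid c$. For example, $\gamma=\matrixx{1}{0}{N}{1}$ gives $A\gamma A^{-1}=\matrixx{1+N/3}{-N/9}{N}{1-N/3}$. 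So conjugation by $A$ lands only in $\Gamma_0(N)\cap\Gamma_1(N/3)\subset\Gamma$, not in $\Gamma_1(N)$, and no recomputation will change this.

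Your fallback does not close the gap. Invariance of $f$ under $\Gamma$ only makes $\lambda$ a homomorphism on $\Gamma$ with $L\subseteq L_\xi$ of finite index. It does not give $L_\xi=L$: period lattices of one form over nested groups fixing it can differ. For the level-$11$ newform, the $\Gamma_1(11)$-periods have index $5$ in the $\Gamma_0(11)$-periods, because $X_1(11)\to X_0(11)$ is a degree-$5$ isogeny of genus-one curves.

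Worse, since $\omega\lambda(\gamma)=\lambda(A\gamma A^{-1})$ for every $\gamma\in\Gamma_1(N)$, the set $\{\lambda(A\gamma A^{-1})\}$ is exactly $\omega L$. So the statement ``$\lambda(A\gamma A^{-1})\in L$ for all $\gamma\in\Gamma_1(N)$'' is literally equivalent to $\omega L\subseteq L$, and the fallback assumes the conclusion. In the paper, $L_\xi=L$ is Lemma \ref{L=L}. It is proved only later, using the arithmetic input of Proposition \ref{cusp}, and it already treats $L$ as an $\CO_K$-module when it passes from $\sqrt{-3}$-torsion to $L_\xi\subseteq\frac{1}{\sqrt{-3}}L$. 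So you cannot invoke it here.

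The repair is to conjugate by an element that actually normalizes $\Gamma_1(N)$. Take $\gamma_0\in\Gamma_0(N)$ with $\xi(\gamma_0)=\omega$; this exists because $\xi$ is a nontrivial cubic character. Then $f|_{\gamma_0}=\omega f$, and $\gamma'=\gamma_0\gamma\gamma_0^{-1}\in\Gamma_1(N)$ because $\Gamma_1(N)\trianglelefteq\Gamma_0(N)$. Hence $\omega\lambda(\gamma)=2\pi i\int_{\gamma_0\infty}^{\gamma'\gamma_0\infty}f=\lambda(\gamma')\in L$. The last equality uses that $s\mapsto\int_s^{\gamma's}f$ is constant, which you now genuinely need because $\gamma_0\infty\neq\infty$. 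This is exactly the paper's proof.
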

\begin{proof}
Choose $\gamma_0\in\Gamma_0(N)$ such that $\xi(\gamma_0)=\omega$. For any $\gamma\in\Gamma_1(N)$,
\[\omega \int_{i\infty}^{\gamma \infty}f(\tau)d\tau =\int_{i\infty}^{\gamma \infty}f(\tau)\mid_{\gamma_0}d\tau= \int_{\gamma_0\infty}^{\gamma_0\gamma \infty}f(\tau)d\tau=\int_{\gamma_0\infty}^{\gamma'\gamma_0 \infty}f(\tau)d\tau\]
for some $\gamma'\in \Gamma_1(N)$. But $$2\pi i\int_{s}^{\gamma's}f(\tau) d\tau $$ is independent of $s$ and belongs to $L$. So $L$ is stable under the multiplication by $\omega$. 
\end{proof}

We will consider another two lattices spanned by the periods of $f$ over bigger congruence subgroups.
$$L_\xi=\left\langle 2\pi i\int_{i\infty}^{\gamma \infty}f(\tau) d\tau\mid \gamma\in\Gamma\right\rangle,$$ 
$$L_0=\left\langle 2\pi i\int_{i\infty}^{\gamma \infty}f(\tau) d\tau\mid \gamma\in\Gamma_0(N/3)\right\rangle.$$
where $\left\langle\ \right\rangle$ means $\BZ$ linear spanning.

\begin{lemma}\label{L=L}
We have $L_{\xi}=L$.
\end{lemma}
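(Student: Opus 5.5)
The inclusion $L\subseteq L_\xi$ is immediate from $\Gamma_1(N)\subseteq\Gamma$, so the plan concerns only $L_\xi\subseteq L$. The starting point is that $\xi(\gamma)=1$ for every $\gamma\in\Gamma$, by the description of $\xi$ in Section 2. Hence $f$ is $\Gamma$-invariant, and the differential $2\pi i f(\tau)d\tau$ is $\Gamma$-invariant. It follows that
\[P:\Gamma\to\BC,\qquad P(\gamma)=2\pi i\int_s^{\gamma s}f(\tau)d\tau\]
is independent of $s$ and is a homomorphism. This is the same observation already used in the proof of Lemma \ref{cm}. Because $P$ is $L$-valued on the normal subgroup $\Gamma_1(N)$, it induces a homomorphism
\[\bar P:\Gamma/\pm\Gamma_1(N)\longrightarrow L_\xi/L.\]
Here $-I$ contributes nothing, since $P(-I)=0$. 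Reducing the lower-right entry modulo $N$ identifies the finite abelian group $\Gamma/\pm\Gamma_1(N)$ with the subgroup of $(\BZ/N)^\times/\pm1$ cut out by the condition that $d$ is a cube modulo $p$. The goal is to show $\bar P=0$.

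Next, interpret $\bar P$ geometrically. For $\delta\in\Gamma$ with lower-right entry $d$, the value $P(\delta)=2\pi i\int_{i\infty}^{\delta\infty}f$ is the image of the cuspidal divisor $(\delta\infty)-(\infty)$ on $E_L=\BC/L$. Since $f|_\delta=f$, the diamond operator $\langle d\rangle$ acts on $E_L$ through $\varphi$ as a translation, exactly as in the proof of Proposition \ref{AB}:
\[\varphi\circ\langle d\rangle=\varphi+t_d,\qquad t_d=\varphi(\langle d\rangle\infty)-\varphi(\infty).\]
Thus $\bar P(\delta)$ corresponds to the point $t_d\in E_L$, and the claim reduces to proving $t_d=O$. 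The cusps $\infty$ and $\langle d\rangle\infty$ are both $\BQ$-rational cusps of $X_1(N)$, and the parametrization is defined over $K$. Hence $t_d\in E_L(K)$.

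The curve $E_L$ is $K$-isomorphic to a twist of $E_{\bar\varpi^i}$. The argument of Proposition \ref{cusp} therefore applies and gives $E_L(K)=E_L[\sqrt{-3}](K)$, a group of order $3$. So $d\mapsto t_d$ is a homomorphism into $\BZ/3$, and it vanishes on every element of order prime to $3$. Now $p\equiv 4,7\mod 9$, so the group of cubes in $(\BZ/p)^\times$ has order $(p-1)/3$, which is prime to $3$. Consequently the $p$-part of $\Gamma/\pm\Gamma_1(N)$ contributes nothing.

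What remains is the $3$-part, namely $(\BZ/9)^\times/\pm1$ or $(\BZ/27)^\times/\pm1$. I expect this to be the main obstacle. My first attempt would use the $\Gamma$-normalizing matrices $A$ and $B$ from Proposition \ref{AB}. Conjugation by $A$ or $B$ changes the lower-right entry only by multiples of $N/3$, while $A$ acts as $[\omega]$ on $E_L$ and $A$ fixes $\infty$. Combining these facts, I would try to show that $t_d=[\omega]t_d$, or more generally $t_d=[\omega^k]t_{d'}$ with $d'$ related to $d$, for a generating set of the $3$-part. Since $[\omega]$ acts trivially on $E_L[\sqrt{-3}]$, this by itself gives no information, so I would need something more. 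The extra input would be to evaluate at a second cusp, such as $[0]$ or $[3/N]$, as was done in Proposition \ref{AB}, where $\varphi([3/N])=O$ was derived. Comparing $\langle d\rangle$ with $B$ should force $t_d=O$. If this fails, the fallback is a direct check, using the Fourier expansion of $f$ (supported on $n\equiv1\mod 3$) and the explicit coset representatives of $\Gamma$ modulo $\pm\Gamma_1(N)$, that the corresponding cusp-to-cusp periods lie in $L$.
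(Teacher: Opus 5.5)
\paragraph{Verdict.} Your reduction matches the paper, but there is a genuine gap at the $3$-part, which you leave as a list of things to try. The paper does not fill this gap either.

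\paragraph{Where the paper stands.} Your first two steps are the paper's argument. $L_\xi/L$ injects into $E_L(K)$, which has order $3$ by Proposition~\ref{cusp}. The piece of $\Gamma/\pm\Gamma_1(N)$ coming from the cubes in $(\BZ/p\BZ)^\times$ then dies, because its order $\frac{p-1}{3}$ is prime to $3$. The paper disposes of everything else by asserting $\gamma^{(p-1)/3}\in\Gamma_1(N)$ for every $\gamma\in\Gamma$, but this holds only modulo $p$. Since $\frac{p-1}{3}$ is even and prime to $3$, the congruence $d^{(p-1)/3}\equiv 1$ modulo $9$ (resp.\ $27$) forces $d\equiv\pm1$. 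For instance, $p=13$, $N=117$, $d=40$ gives $40^4\equiv 40\pmod{117}$. So your diagnosis is correct, and the missing argument must be supplied.

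\paragraph{The $A$-conjugation does give information.} You dismissed it, but it already does most of the work. Do not phrase it as ``$A$ acts as $[\omega]$ on $E_L$'': Proposition~\ref{AB} is stated on $X_\Gamma$, so it presupposes the lemma, and $A$ does not normalize $\Gamma_1(N)$. Use instead $f(\tau+\frac13)=\omega f(\tau)$. Substitution gives $P(A\gamma A^{-1})=\omega P(\gamma)$ for $\gamma\in\Gamma$.
\begin{itemize}
\item Apply this to $\gamma=\matrixx{1}{0}{Nc}{1}\in\Gamma_1(N)$. Then $A\gamma A^{-1}=\matrixx{1+Nc/3}{-Nc/9}{Nc}{1-Nc/3}\in\Gamma$ has period in $\omega L=L$.
\item So $\bar P$ kills every class with $d\equiv 1\pmod{N/3}$. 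In your language, $t_{de}=[\omega]t_d=t_d$ forces $t_e=0$.
\item For $N=9p$ this is the whole $3$-part, since $1-3p\equiv 7\pmod 9$, so the lemma follows in that case.
\end{itemize}

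\paragraph{The remaining case $N=27p$.} Here a quotient $(\BZ/9\BZ)^\times/\pm1$ survives, and you need one more element. Take the commutator $C=ABA^{-1}B^{-1}=\matrixx{1+3p+9p^2}{-p}{27p^2}{1-3p}$. It lies in $\Gamma$ and has $d\equiv 7\pmod 9$.
\begin{itemize}
\item Because $f|_A=\omega f$ and $f|_B=\omega^2 f$, the map $\Phi_s(g)=2\pi i\int_s^{gs}f$ is a crossed homomorphism.
\item Evaluating at $s=0$, where $B0=0$, gives $P(C)=(1-\omega^2)(\omega-1)\Lambda=-3\Lambda$, with $\Lambda=2\pi i\int_{i\infty}^0 f$.
\item Since $3L_\xi\subseteq L$, this gives $9\Lambda\in L$.
\item $\Lambda \bmod L=\varphi(0)$ lies only in $E_L(K(\zeta_N))$, because the cusp $0$ of $X_1(N)$ is defined over $\BQ(\zeta_N)$, not over $K$. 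This is the care your ``second cusp'' idea needs.
\item The $3$-primary torsion of $E_L(K(\zeta_N))$ is just $E_L[\sqrt{-3}]$. The other $3$-torsion points require $\sqrt[3]{\bar\varpi}$, and $K(\sqrt[3]{\bar\varpi})$ is not abelian over $\BQ$.
\item Hence $3\Lambda\in L$, so $P(C)\in L$, which completes the proof.
\end{itemize}
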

\begin{proof}
Let $\gamma=\matrixx{a}{b}{c}{d}\in\Gamma$. Since $\gamma \infty$ is defined over $\BQ$, the point $\varphi(\gamma\infty)$ is a $K$ point of $E_{\bar\varpi^i}$. So $\varphi(\gamma\infty)$ is $\sqrt{-3}$-torsion by Proposition \ref{cusp}. As a result we know that $|L_\xi/L|$ is a divisor of $3$. Since $d$ is a cube mod $p$, $\gamma^{\frac{p-1}{3}}\in \Gamma_1(N)$. So
$$\frac{p-1}{3}2\pi i\int_{i\infty}^{\gamma \infty}f(\tau)d\tau=2\pi i\int_{i\infty}^{\gamma^{\frac{p-1}{3}} \infty}f(\tau) d\tau\in L.$$
Since $|L_\xi/L|=1$ or $3$ and $(\frac{p-1}{3},3)=1$, we know that 
$$2\pi i\int_{i\infty}^{\gamma \infty}f(\tau) d\tau\in L.$$
\end{proof}

\begin{lemma}\label{L=L0}
We have $L_0=L$.
\end{lemma}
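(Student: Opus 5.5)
The plan is to reduce everything to Lemma \ref{L=L} by decomposing $\Gamma_0(N/3)$ into cosets of $\Gamma$. I would then control the extra periods that appear using Proposition \ref{AB} and Proposition \ref{cusp}. Since $3\mid N/3$, the index $[\Gamma_0(N/3):\Gamma_0(N)]$ is $3$, with coset representatives $1,B,B^2$ where $B=\matrixx{1}{0}{N/3}{1}$. Also $[\Gamma_0(N):\Gamma]=3$, with representatives $1,\gamma_0,\gamma_0^2$ for any $\gamma_0\in\Gamma_0(N)$ with $\xi(\gamma_0)=\omega$. So every $\gamma\in\Gamma_0(N/3)$ can be written as $\gamma=\gamma''\gamma_0^{j}B^{k}$ with $\gamma''\in\Gamma$. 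The inclusion $L\subseteq L_0$ is trivial, so only $L_0\subseteq L$ needs proof.

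For the reverse inclusion I would use the cocycle identity. If $g$ is a matrix with $f\mid_g=\lambda f$, then
\[2\pi i\int_{i\infty}^{g s}f=2\pi i\int_{i\infty}^{g\infty}f+\lambda\cdot 2\pi i\int_{i\infty}^{s}f .\]
Applying this to $\gamma''$ (with $\lambda=1$), to $\gamma_0$ (with $\lambda=\omega$) and to $B$ (with $\lambda=\omega^2$, by Proposition \ref{AB}) reduces the claim to three facts. First, $2\pi i\int_{i\infty}^{\gamma''\infty}f\in L$, which is Lemma \ref{L=L}. Second, $2\pi i\int_{i\infty}^{B^k\infty}f\in L$. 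Third, $2\pi i\int_{i\infty}^{\gamma_0\infty}f\in L$. Since $L$ is an $\CO_K$-module by Lemma \ref{cm}, multiplying by the roots of unity $\omega^j$ causes no trouble. The second fact is already contained in the proof of Proposition \ref{AB}, where it is shown that $\varphi([3/N])=\varphi(B\infty)=O$. Iterating gives $\varphi(B^k\infty)=O$, and this is exactly the statement that these periods lie in $L$.

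The main obstacle is the third fact. Here I would imitate the proof of Proposition \ref{AB}. The matrix $\gamma_0$ normalizes $\Gamma$ and satisfies $f\mid_{\gamma_0}=\omega f$, so it induces an automorphism of $E_{\bar\varpi^i}$ of the form $P\mapsto[\omega]P+b$ with $b=\varphi(\gamma_0\infty)$. I would show that $b\in E_{\bar\varpi^i}(K)$; the natural route is to use that $\gamma_0\infty$ is a cusp of $X_\Gamma$ with denominator divisible by $N$, and such cusps are rational in the model where $\infty$ is rational. Alternatively, I would choose $\gamma_0$ concretely, for instance one of the matrices \eqref{omega1} or \eqref{omega0}, so that its fixed point is a CM point whose image is controlled by Theorem \ref{cmpoint}. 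Once $b\in E_{\bar\varpi^i}(K)$, Proposition \ref{cusp} makes $b$ a $\sqrt{-3}$-torsion point, so the relevant period lies in $\frac{1}{\sqrt{-3}}L$. Finally, $\gamma_0^{3}\in\Gamma$, and the cocycle identity gives
\[2\pi i\int_{i\infty}^{\gamma_0^3\infty}f=(1+\omega+\omega^2)\,2\pi i\int_{i\infty}^{\gamma_0\infty}f=0 .\]
That alone is not decisive, so as a fallback I would run the $(p-1)/3$-trick of Lemma \ref{L=L} on a suitable power of $\gamma_0B$ to exclude index $3$ and conclude $L_0=L$.
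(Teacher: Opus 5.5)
Your reduction is sound and matches the paper's. Writing $\gamma=\gamma''\gamma_0^{j}B^{k}$ and applying the cocycle identity, everything comes down to one period in the nontrivial coset of $\Gamma_H=\langle\Gamma,B\rangle$. The periods of $\Gamma$ and of $B$ are already in $L$, by Lemma \ref{L=L} and by $\varphi(3/N)=O$.

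The gap is your third fact, which is the whole content of the lemma. Rationality of the cusp $\gamma_0\infty$ only gives $\eta:=2\pi i\int_{i\infty}^{\gamma_0\infty}f(\tau)d\tau\in\frac{1}{\sqrt{-3}}L$, and none of your further steps removes the index $3$:
\begin{itemize}
\item The map $Q\mapsto[\omega]Q+b$ has order $3$ and has fixed points for every $b$, so it forces nothing.
\item $\gamma_0^3$ only yields $0=0$.
\item The power trick cannot work. The map $\gamma\mapsto(d\bmod p)$ modulo cubes is a homomorphism $\Gamma_0(N/3)\to\BZ/3\BZ$ that kills $\Gamma_H$ and sends $\gamma_0B$ to a generator. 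So $(\gamma_0B)^m\in\Gamma_H$ forces $3\mid m$. Since $f\mid_{\gamma_0B}=f$, you then only learn $m\eta\in L$, which already follows from $\eta\in\frac{1}{\sqrt{-3}}L$.
\item Your concrete alternative fails too. The matrices in \eqref{omega1}, \eqref{omega0} have trace $3pt-1$, so they are hyperbolic and fix no CM point; indeed $\Gamma_0(N)$ has no elliptic elements at all since $9\mid N$. In any case you use the CM point only to get $b\in E_{\bar\varpi^i}(K)$, which again stops at $\frac{1}{\sqrt{-3}}L$.
\end{itemize}

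The missing idea is a matrix $g$ that fixes a CM point $z_0$ and satisfies $f\mid_g=\lambda f$. Your own cocycle identity at $s=z_0$ then gives
\[2\pi i\int_{i\infty}^{g\infty}f(\tau)d\tau=(1-\lambda)\,2\pi i\int_{i\infty}^{z_0}f(\tau)d\tau .\]

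The paper takes $g=D=\iota_2(\omega)$ of \eqref{embedding1}. This matrix fixes $W(\tau_r)$ and satisfies $f\mid_D=f$ by \eqref{omega12}, \eqref{omega32} and Proposition \ref{AB}. Also $D\infty=3(r-1)/N$ with $r-1$ a non-cube mod $p$. So $\lambda=1$ and the period over the missing coset is exactly $0$.

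Your CM idea can be repaired along the same lines with $g=\iota_1(\omega)$:
\begin{itemize}
\item It fixes $\tau_r$, and $\lambda$ is a cube root of unity, so $1-\lambda\in\sqrt{-3}\,\CO_K$.
\item For $-r\equiv\omega\bmod\varpi$, Theorem \ref{cmpoint} (which does not use the lemma) makes $\varphi(\tau_r)$ a $K$-point. Hence $2\pi i\int_{i\infty}^{\tau_r}f(\tau)d\tau\in\frac{1}{\sqrt{-3}}L$, and the period of $g$ lands in $L$. This is exactly the computation in the proof of Proposition \ref{torsion}.
\item The cusp $\iota_1(\omega)\infty=-r/(9pt)$ lies in the missing coset. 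Indeed $-r$ reduces to a primitive cube root of unity mod $p$, which is a non-cube because $9\nmid p-1$.
\end{itemize}
In both versions, what removes the index $3$ is the factor $1-\lambda$ coming from the fixed point, not the $K$-rationality of $b$.
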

\begin{proof}
Let $\Gamma_H$ be the subgroup of $\Gamma_0(N/3)$ generated by $\Gamma$ and the matrix $B=\matrixx{1}{0}{N/3}{1}$, then $[\Gamma_0(N/3):\Gamma_H]=3$ and $\Gamma_0(N/3)/\Gamma_H$ is generated by any matrix $\matrixx{a}{b}{N/3}{d}$ where $a$ is not a cube modulo $p$. 

By Proposition \ref{AB}, $$2\pi i\int_{i\infty}^{B\infty}f(\tau)d\tau=(1-\omega^2)2\pi i\int_{i\infty}^{0}f(\tau)d\tau\in L.$$
We also know that (Lemma \ref{L=L}) 
$$2\pi i\int_{i\infty}^{\gamma\infty}f(\tau)d\tau\in L,\ \text{for}\ \gamma\in\Gamma .$$
So we just need to prove that there exists an integer $a$ which is not a cube modulo $p$ and
\[2\pi i\int_{\infty}^{3a/N}f(\tau)d\tau\in L.\]

Since $p\equiv 1\mod 3$, the congruent equation $r^2-r+1\equiv 0\mod 3p$ always has integer solutions. We choose $r$ a solution such that $\xi(r)=\omega$ or $\omega^2$ depending on $N=9p$ or $27p$ (this condition is in fact a condition modulo $p$). This implies $r$ and $r-1$ are not  cubes modulo $p$.
Consider the matrix
\[D=\matrixx{r-1}{-\frac{3(r^2-r+1)}{N}}{\frac{N}{3}}{-r},\]
as in \eqref{embedding1} with fixed point $z_0=W(\tau_r)$. 
By Proposition \ref{AB}, $f(z)\mid_{B^2}=\omega f(z), f(z)\mid_{A}=\omega f(z)$. Then from \eqref{omega12} and \eqref{omega32}, we know that $f(z)\mid_D=f(z)$.  So,
\[\int_{z_0}^{D\infty}f(\tau)d\tau=\int_{D^{-1}z_0}^{i\infty} f(\tau)\mid_D d\tau=\int_{z_0}^{i\infty}f(\tau)d\tau.\]
Now
\[\int_{i\infty}^{\frac{3(r-1)}{N}}f(\tau)d\tau=\int_{i\infty}^{D\infty}f(\tau)d\tau=\int_{i\infty}^{z_0}f(\tau)d\tau+\int_{z_0}^{D\infty}f(\tau)d\tau=0.\]
This finishes the proof of the Lemma.
\end{proof}

\begin{corollary}\label{zero}
There exist $\gamma\in\Gamma_0(N)$ not in $\Gamma$ such that 
\[\int_{i\infty}^{\gamma\infty} f(\tau)d\tau=0.\]
\end{corollary}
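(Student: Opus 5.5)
The corollary should come straight out of the computation at the end of the proof of Lemma \ref{L=L0}, so I will take $\gamma$ to be the matrix $D$ used there, or a small modification of it. Recall
\[D=\matrixx{r-1}{-\frac{3(r^2-r+1)}{N}}{\frac{N}{3}}{-r},\]
where $r$ solves $r^2-r+1\equiv 0\mod 3p$ with $\xi(r)=\omega$ or $\omega^2$. That proof already shows
\[\int_{i\infty}^{D\infty}f(\tau)d\tau=0.\]
The argument uses $f\mid_D=f$, which follows from \eqref{omega12}, \eqref{omega32} and Proposition \ref{AB}, together with the fact that $z_0=W(\tau_r)$ is fixed by $D$. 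The catch is that $D$ lies in $\Gamma_0(N/3)$ but not in $\Gamma_0(N)$, since its lower-left entry is $N/3$.

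To get an element of $\Gamma_0(N)$, I would use one of the products already written down. If $N=9p$, set $\gamma=B^2D$, which is \eqref{omega02}. If $N=27p$, set $\gamma=B^2AD$, which is \eqref{omega22}. These decompositions state that $\gamma\in\Gamma_0(N)$.

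The integral to compute splits as
\[\int_{i\infty}^{\gamma\infty}f=\int_{i\infty}^{g\infty}f+\int_{g\infty}^{gD\infty}f,\qquad g=B^2 \text{ or } B^2A.\]
For the second term, substitute $\tau\mapsto g\tau$ to get $\int_{\infty}^{D\infty}f\mid_g$. Proposition \ref{AB} gives $f\mid_g=\omega f$ (for $g=B^2$ this is $\omega^4=\omega$; for $g=B^2A$ it is $\omega\cdot\omega=\omega^2$), so this term is a cube root of unity times $\int_{i\infty}^{D\infty}f=0$. For the first term, $A\infty=\infty$ and $B$ acts on $f$ by a constant. Proposition \ref{AB}, or the cusp computation in its proof, gives $\int_{i\infty}^{B^k\infty}f=(1-\omega^{2k})\int_{i\infty}^0 f$.

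This does not vanish outright. Instead of grinding through that, I would avoid the $B$-factors altogether: conjugate by $W$, or use $\iota_1(\omega)$ and its fixed point $\tau_r$ directly. The matrices $A\iota_1(\omega)$ from \eqref{omega1} and $B^2A\iota_1(\omega)$ from \eqref{omega2} lie in $\Gamma_0(N)$. Since $A$ fixes $\infty$ and rescales $f$ by $\omega$, the same fixed-point argument — split the integral at $\tau_r$ and use the invariance $f\mid_{A\iota_1(\omega)}=f$ that Proposition \ref{AB} provides — gives
\[\int_{i\infty}^{A\iota_1(\omega)\infty}f=\int_{i\infty}^{\tau_r}f+\int_{\tau_r}^{A\iota_1(\omega)\infty}f=0\]
when $N=9p$. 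The route via $\iota_1$ is the one I would try first.

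The last step is to check $\gamma\notin\Gamma$, which means its lower-right entry is not a cube mod $p$. For \eqref{omega1} that entry is $r-1$. For \eqref{omega02} it is $-2r^2+r-2\equiv -r\pmod p$, using $r^2\equiv r-1$. The choice $\xi(r)=\omega^{\pm1}$ made in Lemma \ref{L=L0} makes $r$ and $r-1$ non-cubes, and $-1$ is a cube, so these entries are non-cubes mod $p$. The main obstacle is the $N=27p$ case, where the extra $B^2$ contributes the term $(1-\omega^4)\int_{i\infty}^0 f$, which need not vanish. To handle it I would rewrite $B^2A\iota_1(\omega)$ as $\iota_1(\omega)AB^2$, using \eqref{omega3}, so that the $B$-factors sit on the right and $B$ fixes $0$. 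Failing that, I would take a product such as $\gamma_1\gamma_2^{-1}$ of two such elements whose $B$-contributions cancel, chosen so that the product still lies in $\Gamma_0(N)\setminus\Gamma$ with a non-cube lower-right entry.
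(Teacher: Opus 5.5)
There is a genuine gap. For $N=9p$, the invariance you invoke, $f\mid_{A\iota_1(\omega)}=f$, contradicts your own requirement $A\iota_1(\omega)\notin\Gamma$. On $\Gamma_0(N)$ one has $f\mid_\gamma=\xi(d)f$, and $\xi(d)=1$ exactly on $\Gamma$. Also, $A\iota_1(\omega)$ does not fix $\tau_r$; it sends it to $\tau_r+1/3$.

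The step can be repaired. Since $A\infty=\infty$ and $f\mid_A=\omega f$, you get $\int_{i\infty}^{A\iota_1(\omega)\infty}f=\omega\int_{i\infty}^{\iota_1(\omega)\infty}f$. The fixed-point argument for $\iota_1(\omega)$ then kills this, but only if $f\mid_{\iota_1(\omega)}=\omega^{-1}\xi(r-1)f$ equals $f$. That means $\xi(r-1)=\omega$, equivalently $\xi(r)=\omega^2$, using $r-1\equiv r^2\bmod p$. This is the \emph{opposite} of the normalization made in Lemma \ref{L=L0} when $N=9p$, so reusing that $r$ does not work.

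For $N=27p$ you have no argument. The leftover $(1-\omega)\int_{i\infty}^0 f$ is in fact nonzero, since $\varphi(0)\neq O$. Moreover, $\iota_1(\omega)AB^2$ is not a rewriting of $B^2A\iota_1(\omega)$ but a different matrix. When $f\mid_{\iota_1(\omega)}=f$, its period is $\omega(1-\omega)\int_{i\infty}^0 f$, which is again nonzero. Your fallback, a product whose $B$-contributions cancel, gives no reason why a cancelling element exists.

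That reason is the missing idea, and it is the whole of the paper's proof: a period only has to lie in $L$, not vanish. Put $P(\gamma)=2\pi i\int_{i\infty}^{\gamma\infty}f$. By definition \eqref{L}, $L=\{P(\gamma_1):\gamma_1\in\Gamma_1(N)\}$. For $\gamma_1\in\Gamma_1(N)$, $\Gamma_1(N)$-invariance of $f$ gives $P(\gamma_1^{-1}\gamma_0)=P(\gamma_0)-P(\gamma_1)$. So if $P(\gamma_0)\in L$ for some $\gamma_0\in\Gamma_0(N)\setminus\Gamma$, choose $\gamma_1$ with $P(\gamma_1)=P(\gamma_0)$. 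Then $\gamma=\gamma_1^{-1}\gamma_0$ still lies in $\Gamma_0(N)\setminus\Gamma$ and has vanishing period.

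The paper gets $P(\gamma_0)\in L$ for every $\gamma_0\in\Gamma_0(N)$ from $L_0=L$ (Lemma \ref{L=L0}). In your setup it is enough that $(1-\omega)\,2\pi i\int_{i\infty}^0 f\in L$. This is the computation of the period of $B$ in the proof of Lemma \ref{L=L0}: $\varphi(0)$ is $K$-rational, hence $\sqrt{-3}$-torsion by Proposition \ref{cusp}.

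With this, your very first candidates already work after left multiplication by a suitable element of $\Gamma_1(N)$. These are $B^2D$ and $B^2AD$, i.e. \eqref{omega02} and \eqref{omega22}, whose periods you correctly reduced to $(1-\omega)\int_{i\infty}^0 f$. The detour through $\iota_1(\omega)$ is then unnecessary.
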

\begin{proof}Let $\gamma_0\in\Gamma_0(N)$ not in $\Gamma$.
Since $L_0=L$ by Lemma \ref{L=L}, there exists $\gamma_1\in\Gamma$ such that
\[\int_{i\infty}^{\gamma_0\infty}f(\tau)d\tau=\int_{i\infty}^{\gamma_1\infty}f(\tau)d\tau\]
i.e.
\[\int_{\gamma_1\infty}^{\gamma_0\infty}f(\tau)d\tau=\int_{i\infty}^{\gamma_1^{-1}\gamma_0\infty}f(\tau)d\tau=0.\]
Now $\gamma=\gamma_1^{-1}\gamma_0$ satisfies the condition of the Corollary.
\end{proof}


All the above discussions obviously apply to $f^c$. Let 
$$L^c=\set{2\pi i \int_{i\infty}^{\gamma \infty}f^c(\tau) d\tau\mid \gamma\in\Gamma_1(N)}$$
then obviously the lattice $L^c$ is the complex conjugate of the lattice $L$. 

\begin{proposition}The functions
$y(z)$ and $y^c(z)$ are modular functions of $\Gamma_0(N/3)$.
\end{proposition}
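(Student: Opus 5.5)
We need to show $y(\gamma z)=y(z)$ for $\gamma\in\Gamma_0(N/3)$. We have $y(z)=\frac{1}{2c^3}\wp_L'(z_t)$ with $z_t=2\pi i\int_{i\infty}^t f(\tau)d\tau$. The plan is to compute how $z_t$ transforms under $\gamma\in\Gamma_0(N/3)$ and to use that $\wp'_L$ is odd of weight $3$ for the CM lattice $L$.

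\textbf{Step 1: $\Gamma_0(N)$.} For $\gamma\in\Gamma_0(N)$ we have $f|_\gamma=\xi(\gamma)f$ with $\xi(\gamma)\in\{1,\omega,\omega^2\}$. Hence
\[z_{\gamma t}=2\pi i\int_{i\infty}^{\gamma i\infty} f\,d\tau+2\pi i\int_{\gamma i\infty}^{\gamma t}f\,d\tau=\ell_\gamma+\xi(\gamma)z_t,\qquad \ell_\gamma=2\pi i\int_{i\infty}^{\gamma\infty}f\,d\tau .\]
By Lemma \ref{L=L0}, $\ell_\gamma\in L_0=L$, since $\Gamma_0(N)\subset\Gamma_0(N/3)$. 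By Lemma \ref{cm}, $\wp_L(\zeta z)=\zeta^{-2}\wp_L(z)$ and $\wp'_L(\zeta z)=\zeta^{-3}\wp'_L(z)=\wp'_L(z)$ for $\zeta$ a cube root of unity. Therefore $y(\gamma t)=y(t)$ on $\Gamma_0(N)$. Only $x$ picks up the factor $\xi(\gamma)^{-2}$, which is consistent with $x$ having level $\Gamma_1(N)$, or $\Gamma$.

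\textbf{Step 2: the extra element.} Since $\Gamma_0(N/3)$ is generated by $\Gamma_0(N)$ together with $B=\matrixx{1}{0}{N/3}{1}$ (standard coset representatives $B^k$, $k=0,1,2$), it suffices to treat $B$. By Proposition \ref{AB}, $f|_B=\omega^2 f$, so as above
\[z_{Bt}=\ell_B+\omega^2 z_t,\qquad \ell_B=2\pi i\int_{i\infty}^{B\infty}f\,d\tau .\]
Lemma \ref{L=L0} gives $\ell_B\in L$; its proof exhibits $\ell_B=(1-\omega^2)2\pi i\int_{i\infty}^0 f\,d\tau$ and shows that this lies in $L$. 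Then $\wp'_L(z_{Bt})=\wp'_L(\omega^2 z_t)=\wp'_L(z_t)$, so $y(Bt)=y(t)$. The same argument applied to $f^c$, $L^c$ and $\varphi^c$ gives the statement for $y^c$.

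\textbf{Main obstacle.} The delicate point is ensuring that the translation constants $\ell_\gamma$ genuinely lie in $L$ rather than only in $\frac{1}{\sqrt{-3}}L$. A priori, $\varphi$ of a rational cusp is only known to be $\sqrt{-3}$-torsion by Proposition \ref{cusp}, which would still allow $y$ to change sign or shift. This is exactly what Lemma \ref{L=L0} resolves, so the proof reduces to invoking it carefully.

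One must also check that the generation claim $\Gamma_0(N/3)=\langle\Gamma_0(N),B\rangle$ holds. This follows from the index $[\Gamma_0(N/3):\Gamma_0(N)]=3$ when $9\mid N/3$, and the cosets $B^k\Gamma_0(N)$, $k=0,1,2$, are distinct.
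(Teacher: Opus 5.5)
Your proof is correct and follows the paper's argument. You write $z_{\gamma t}=\ell_\gamma+\omega^k z_t$ using $f|_\gamma=\omega^k f$ on $\Gamma_0(N)$ and $B$, place $\ell_\gamma$ in $L$ via $L_0=L$ (the paper's text cites Lemma \ref{L=L}, but Lemma \ref{L=L0} is the one actually needed, as you use), and conclude with $\wp'_L(\omega z)=\wp'_L(z)$. One small slip: $[\Gamma_0(N/3):\Gamma_0(N)]=3$ holds because $3\mid N/3$, not $9\mid N/3$ (which fails for $N=9p$), so your coset argument does cover both $N=9p$ and $N=27p$.
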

\begin{proof}
We only give the proof for $y(z)$, the proof for $y^c(z)$ is similar.
Let $(\wp_L, \wp_L')$ be Weierstrass $\wp$-functions of $E_{L}$. Since $E_{L}$ has CM by $\CO_K$, we know that $\wp_L'(\omega x)=\wp_L'(x)$. Let $\gamma\in\Gamma_0(N/3)$, then
\begin{equation}\label{yyc}
\int_{i\infty}^{\gamma z}f(\tau)d\tau=\int_{i\infty}^z f\mid_{\gamma}(\tau)d\tau+\int_{i\infty}^{\gamma\infty}f(\tau)d\tau.
\end{equation}
Note that $\Gamma_0(N/3)$ is generated by $\Gamma_0(N)$ and the matrix $B$. By Proposition \ref{AB}, $f\mid_{\gamma}(\tau)=\omega^k f(\tau)$ for some $k=0,1,2$. By Lemma \ref{L=L}, $$2\pi i\int_{i\infty}^{\gamma\infty}f(\tau)d\tau\in L.$$ Finally by \eqref{yyc},
we have $$y(\gamma z)=\frac{1}{2c^3}\wp_L'\lrb{2\pi i\int_{i\infty}^{\gamma z} f(\tau)d\tau}=\frac{1}{2c^3}\wp_L'\lrb{2\pi i\int_{i\infty}^zf(\tau)d\tau}=y(z).$$
\end{proof}

Let $[w,\omega w]$ be a basis of $L$ and let $L_{3}=[3w, 2\omega w+w]=\sqrt{-3}L$ be the sub-lattice of index $3$ of $L$. The Weierstrass equation of $cL_3$ is given by $$C_{\bar\varpi}: y^2=x^3-\frac{\bar\varpi^{2i}}{4\cdot27}.$$ 
The primitive $3$-torsion points of  $C_{\bar\varpi}$ are $\set{\lrb{\omega^j\frac{\sqrt[3]{\bar\varpi^{2i}}}{3},\pm\frac{\bar\varpi^{i}}{6}}: j=0,1,2}$.  
Let 
\begin{equation}\label{WW}
K(z)=\frac{\frac{\wp'_{L_3}(z)}{2c^3}-\frac{\bar\varpi^{i}}{6}}{\frac{\wp_{L_3}(z)}{c^2}}, \ \ \ J(z)=\frac{\frac{\wp'_{L_3}(z)}{2c^3}+\frac{\bar\varpi^{i}}{6}}{\frac{\wp_{L_3}(z)}{c^2}}.
\end{equation}
By comparing the zeros and poles on $\BC/cL_3$, we know that 
$$K(z)^3=\frac{\wp'_L(z)}{2c^3}-\frac{\bar\varpi^i}{2},\ \ \ J(z)^3=\frac{\wp'_L(z)}{2c^3}+\frac{\bar\varpi^i}{2}.$$ 
Further let
\begin{equation}\label{WWW}
\ff(z)=K \lrb{2\pi i\int_{i\infty}^zf(\tau)d\tau}, \ \ \ \fg(z)=J \lrb{2\pi i\int_{i\infty}^zf(\tau)d\tau},
\end{equation}
then
$$\ff(z)^3=y(z)-\frac{\bar\varpi^i}{2},\ \ \ \fg(z)^3=y(z)+\frac{\bar\varpi^i}{2}.$$
We can similarly define $K^c(z), J^c(z), \ff^c(z)$ and $\fg^c(z)$.
 Now, define
 \begin{equation}\label{modular-function}
F_+(z)=\frac{\ff(z)}{\ff^c(z)},\ \ F_-(z)=\frac{\fg(z)}{\fg^c(z)},\ \ G(z)=\ff(z)\fg^c(z),\ \ H(z)=\ff^c(z)\fg(z).
\end{equation}
We will prove these functions are invariant under $\Gamma$ in Section 8. But for this we need a lot of preparations.

\begin{proposition}\label{mk}
We have $K(\omega^k z)=\omega^{-k}K(z)$. The same is also true for $K^c(z), J(z)$ and $J^c(z)$.
\end{proposition}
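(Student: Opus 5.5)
The plan is to use the homogeneity of the Weierstrass functions under the CM of $L_3$. By Lemma \ref{cm}, $L$ has CM by $\CO_K$, and hence so does $L_3=\sqrt{-3}L$; in particular $\omega L_3=L_3$. For any lattice $\Lambda$ and nonzero $\lambda$ we have $\wp_{\lambda\Lambda}(\lambda z)=\lambda^{-2}\wp_\Lambda(z)$ and $\wp'_{\lambda\Lambda}(\lambda z)=\lambda^{-3}\wp'_\Lambda(z)$. Taking $\Lambda=L_3$ and $\lambda=\omega^k$ gives
\[\wp_{L_3}(\omega^k z)=\omega^{-2k}\wp_{L_3}(z)=\omega^{k}\wp_{L_3}(z),\qquad \wp'_{L_3}(\omega^k z)=\omega^{-3k}\wp'_{L_3}(z)=\wp'_{L_3}(z).\]

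Substituting these into the definition \eqref{WW}, the numerator $\frac{\wp'_{L_3}(z)}{2c^3}-\frac{\bar\varpi^i}{6}$ is unchanged when $z$ is replaced by $\omega^kz$, while the denominator $\wp_{L_3}(z)/c^2$ gets multiplied by $\omega^k$. Hence $K(\omega^kz)=\omega^{-k}K(z)$. The same computation applies verbatim to $J$, since only the sign of the constant in the numerator differs.

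For $K^c$ and $J^c$, the same argument works with $L^c$ and $L_3^c=\sqrt{-3}L^c$ (or its analogue), which are complex conjugates of $L$ and $L_3$. These are again stable under multiplication by $\omega$, because $\bar\omega=\omega^2$ also lies in $\CO_K$. The Manin–Stevens constant is replaced by $\bar c$, or by whatever constant the parallel definition uses, and the scalars play no role in the scaling argument.

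There is essentially no obstacle here. The only point needing care is that $L_3$ really is $\omega$-stable. This follows from $L_3=\sqrt{-3}L$ together with $\omega L=L$; one can also check it directly on the stated basis $[3w,2\omega w+w]$.
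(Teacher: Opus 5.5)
Your argument is correct, but it follows a different route from the paper's.

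\textbf{The two routes.} The paper argues indirectly. It uses $K(z)^3=\frac{\wp'_L(z)}{2c^3}-\frac{\bar\varpi^i}{2}$ and $\wp'_L(\omega z)=\wp'_L(z)$ to see that $K(\omega^k z)/K(z)$ is a continuous function with values in the cube roots of unity. That ratio is therefore constant, and comparing the leading Laurent term (a multiple of $z^{-1}$) at $z=0$ identifies the constant as $\omega^{-k}$. You work directly from the definition \eqref{WW}. From $\omega L_3=L_3$ you get $\wp_{L_3}(\omega^k z)=\omega^{k}\wp_{L_3}(z)$ and $\wp'_{L_3}(\omega^k z)=\wp'_{L_3}(z)$, and read off the factor $\omega^{-k}$.

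\textbf{What each buys.} Your version is shorter and more self-contained. It needs neither the cube identity for $K^3$ (which the paper gets by comparing divisors on $\BC/cL_3$) nor the continuity step. The paper's version uses only $K^3$ and the behaviour at $z=0$. That is the same template it reuses in Proposition \ref{LK} for translation by periods, where no direct scaling formula is available.

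\textbf{A correction to your last remark.} Do not verify $\omega$-stability on the basis $[3w,2\omega w+w]$ as printed; that check would fail. Relative to $(w,\omega w)$ this lattice is spanned by $(3,0)$ and $(1,2)$, so it has index $6$ in $L$. It also does not contain $3\omega w$, since $3a+b=0,\ 2b=3$ has no integer solution. The printed basis is a slip for $\sqrt{-3}L=[3w,(2+\omega)w]$, using $2+\omega=-\omega\sqrt{-3}$. The Weierstrass equation the paper gives for $cL_3$ confirms this reading, since $(\sqrt{-3})^{-6}=-1/27$. Your proof should rest only on $L_3=\sqrt{-3}L$ and $\omega L=L$, which is what the paper intends.
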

\begin{proof}
Since $K(\omega^k z)^3=K(z)^3$,  $K(\omega^k z)=\kappa(z)K(z)$ such that $\kappa(z)^3=1$.  But both $K(\omega^k z)$ and $K(z)$ are continuous functions, so is $\kappa(z)$ which means $\kappa(z)$ is a constant. This proves $K(\omega^k z)/K(z)$ is a cubic root of unity.
Since the leading term of $K(z)$ is $z^{-1}$, we get the relation in the Proposition. 
\end{proof}

\begin{proposition}\label{LK}
The period lattice $L$ acts on $K(z)$ through a nontrivial cubic  character $\nu$, i.e. for any $w\in L$,
\[K(z+w)=\nu(w)K(z)\]
and $\nu(w)=\nu(\omega w)$. The same is also true for $K^c(z)$, $J(z)$ and $J^c(z)$.
\end{proposition}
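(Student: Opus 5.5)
The plan is to argue exactly as in Proposition \ref{mk}: show the ratio is a constant cube root of unity, read off the multiplicative structure, and then exhibit one period on which it is nontrivial. Throughout, $L$ is the period lattice of $\wp_L$ in the variable $z$. Since $L_3=\sqrt{-3}L\subset L$, the functions $\wp_{L_3},\wp'_{L_3}$ are periodic for $L_3$ but not for $L$. However, $K(z)^3=\frac{\wp'_L(z)}{2c^3}-\frac{\bar\varpi^i}{2}$ is $L$-periodic.

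\textbf{Step 1: $\nu$ is a cubic character.} For fixed $w\in L$ we have $K(z+w)^3=K(z)^3$. Hence $K(z+w)/K(z)$ is a meromorphic function whose cube is identically $1$. By continuity away from the discrete set of zeros and poles, it is a constant $\nu(w)\in\mu_3$. Applying the shift twice gives $\nu(w+w')=\nu(w)\nu(w')$. Moreover, $K$ is $L_3$-periodic because it is built from $\wp_{L_3}$ and $\wp'_{L_3}$, so $\nu$ factors through $L/L_3\cong\BZ/3\BZ$.

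\textbf{Step 2: $\nu(\omega w)=\nu(w)$.} By Proposition \ref{mk},
\[K(\omega z+\omega w)=\omega^{-1}K(z+w)=\omega^{-1}\nu(w)K(z),\]
while also
\[K(\omega z+\omega w)=\nu(\omega w)K(\omega z)=\nu(\omega w)\omega^{-1}K(z).\]
Comparing the two gives $\nu(\omega w)=\nu(w)$. This is consistent with $\omega$ acting trivially on $L/\sqrt{-3}L=\CO_K/\sqrt{-3}$, since $\omega\equiv1$ there.

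\textbf{Step 3: nontriviality.} This is the main obstacle. Suppose $\nu$ were trivial. Then $K$ would be an $L$-elliptic function whose cube is $\frac{\wp'_L}{2c^3}-\frac{\bar\varpi^i}{2}$. That function has a triple pole at $0$ and zeros exactly at the points of $\BC/cL$ with $y=\bar\varpi^i/2$. These points are the $x=0$ points, i.e. the $3$-torsion points $(0,\bar\varpi^i/2)$ of $E_L$ (identified with $E_{\bar\varpi^i}$), and the zero there is triple. So $K$ would be $L$-periodic with a single simple pole at $0$ modulo $L$, which is impossible for a nonconstant elliptic function. Hence $\nu$ is nontrivial.

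Alternatively, one can check directly that on $\BC/cL_3$ the divisor of $K$ is $\sum_{j}(P_j)-3(O)$, summed over the three $3$-torsion points $P_j$ of $C_{\bar\varpi}$ with $y=\bar\varpi^i/6$. These points differ by translations in $L/L_3$, and the divisor is not $L$-invariant in the required way. I would present the degree argument, as it is cleaner.

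The same argument applies verbatim to $J$, using the zero $(0,-\bar\varpi^i/2)$, and to $K^c$ and $J^c$ with $L^c=\ov{L}$.
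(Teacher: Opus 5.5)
Your proposal is correct, but for the identity $\nu(\omega w)=\nu(w)$ it takes a different route from the paper.

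The paper argues through the modular action. It takes $\gamma\in\Gamma$ whose period $\alpha$ gives a basis $\{\alpha,\omega\alpha\}$ of $L$. It then uses Corollary \ref{zero} to replace $\gamma$ by $\gamma_1\in\Gamma_0(N)\setminus\Gamma$ with the same period and $\xi(\gamma_1)=\omega$. Comparing how $\ff$ transforms under $\gamma_1$ and under $\gamma_1^2$ yields $\nu(\alpha)\nu(\omega\alpha)=\nu(\alpha)^2$.

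You stay inside the lattice. Translating the rotation law $K(\omega z)=\omega^{-1}K(z)$ of Proposition \ref{mk} by $w$ gives the identity at once. Your side remark is even more direct: $K$ is $L_3$-periodic and $(\omega-1)L\subset\sqrt{-3}L=L_3$. Either way you avoid Corollary \ref{zero} and everything beneath it (Lemma \ref{L=L0}, Proposition \ref{AB}, Proposition \ref{cusp}), so your argument is shorter and uses nothing modular. The paper's version mainly previews how $\nu$ interacts with the nebentypus, which is what is exploited later in Propositions \ref{torsion0} and \ref{torsion}.

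For nontriviality, the paper only says that $K$ cannot be $L$-invariant because $L$-elliptic functions are generated by $\wp_L,\wp_L'$. Your pole count (an $L$-periodic $K$ would have a single simple pole modulo $L$) is the actual justification of that sentence.

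There is one slip in the alternative you set aside. On $\BC/cL_3$ the divisor of $K=(Y-\bar\varpi^i/6)/X$ is $\sum_j(P_j)-(O)-(T)-(-T)$, where $\pm T$ are the two points of $C_{\bar\varpi}$ with $x=0$. It is not $\sum_j(P_j)-3(O)$, which is the divisor of the numerator alone. This divisor is the pullback, under the isogeny $\BC/cL_3\to\BC/cL$, of $(Q)-(O)$, where $Q$ is the zero of $y-\bar\varpi^i/2$. That divisor is not principal, while $3(Q)-3(O)$ is; this is exactly your degree argument again.
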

\begin{proof}The proof that $w\in L$ acts on $K(z)$ by a cubic root is similar to the proof of Proposition \ref{mk}. We now prove $\nu$ is a character. For $w_1,w_2\in L$, $\nu(w_1+w_2)=(K(z+w_1+w_2)/K(z+w_1))\cdot(K( z+w_1)/K(z))=\nu(w_2)\nu(w_1)$.

 In the following we prove that $\nu(w)=\nu(\omega w)$. Since the $L$ invariant functions are generated by $\wp_L(z)$ and $\wp_L'(z)$ see \cite[Chapter VI]{Silvermanbook1}, $K(z)$ cannot be $L$ invariant. 

Let $\gamma\in\Gamma$ such that 
$$\alpha=2\pi i\int_{i\infty}^{\gamma \infty}f(\tau)d\tau,$$ and $\set{\alpha,\omega\alpha}$ is a basis of $L$.  By Corollary \ref{zero}, there exists $\gamma'\in \Gamma_0(N)$ such that $\gamma_1=\gamma\gamma'\in\Gamma_0(N)$ not in $\Gamma$ and
$$2\pi i\int_{i\infty}^{\gamma_1\infty}f(\tau)d\tau=\alpha$$
and without loss of generality assume $\xi(\gamma_1)=\omega$. We consider the action of $\gamma_1$ and $\gamma_1^2$ on $\ff$. Since

\[\int_{i\infty}^{\gamma_1^2 \infty}f(\tau)d\tau=(\xi(\gamma_1)+1)\int_{i\infty}^{\gamma_1\infty}f(\tau)d\tau=(\xi(\gamma_1)+1)\alpha,\]
we get
\begin{eqnarray*}
K\lrb{2\pi i\int_{i\infty}^{\gamma_1^2 z}f(\tau)d\tau}&=&\nu((\xi(\gamma_1)+1)\alpha)\cdot K\lrb{2\pi i\xi(\gamma_1)^2\int_{i\infty}^{z}f(\tau)d\tau}\\
&=&\nu((\xi(\gamma_1)+1)\alpha)\cdot \xi(\gamma_1)^{-2}K\lrb{2\pi i\int_{i\infty}^{z}f(\tau)d\tau}\\
\end{eqnarray*}
Similarly,
\[K\lrb{2\pi i\int_{i\infty}^{\gamma_1 z}f(\tau)d\tau}=\nu(\alpha)\cdot \xi(\gamma_1)^{-1} K\lrb{2\pi i\int_{i\infty}^{ z}f(\tau)d\tau}.\]
As a result, $\nu((\xi(\gamma_1)+1)\alpha)\cdot \xi(\gamma_1)^{-2}=(\nu(\alpha)\cdot \xi(\gamma_1)^{-1})^2$ which implies 
$$\nu(\alpha)\nu(\xi(\gamma_1)\alpha)=\nu(\alpha)^2, \text{i.e.}\ \nu(\omega\alpha)=\nu(\alpha).$$
But $(\alpha,\omega\alpha)$ is a basis of $L$, we finished the proof for $K(z)$. The proof for other functions is similar. 
\end{proof}

\begin{proposition}\label{torsion0}
We have $\varphi(0)$ resp. $\varphi^c(0)$ is a primitive $\sqrt{-3}$ torsion point on $E_{\bar\varpi^i}$ resp. $E_{\varpi^i}$.
\end{proposition}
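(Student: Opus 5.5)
Since the cusp $0$ is defined over $\BQ$, the point $\varphi(0)$ lies in $E_{\bar\varpi^i}(K)$. By Proposition \ref{cusp}, $E_{\bar\varpi^i}(K)=\set{O,(0,\pm\frac{\bar\varpi^i}{2})}$, and the two nonzero points are exactly the primitive $\sqrt{-3}$-torsion points. So it suffices to show $\varphi(0)\neq O$. Via \eqref{modularpara} this means
\[
z_0:=2\pi i\int_{i\infty}^{0}f(\tau)\,d\tau\notin L .
\]
The statement for $\varphi^c$ then follows by complex conjugation of Fourier coefficients, since $L^c=\bar L$. Note also that $3z_0\in L$ automatically, because $\varphi(0)$ is $3$-torsion. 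Hence the task is to show that $z_0$ has exact denominator $\sqrt{-3}$ relative to $L$.

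\textbf{Step 1 (express $z_0$ as an $L$-value).} Integrate along the imaginary axis. This gives $z_0=-L(f,1)$, where $L(f,s)=\sum a_n n^{-s}$, and $L(f,1)=L(\psi,1)$ is the Hecke $L$-value of $E_{\varpi^i}$ (with $f\leftrightarrow E_{\bar\varpi^i}$ as in \cite{GL}). By Lemma \ref{cm}, $L=\Omega\,\fa$ for some nonzero period $\Omega$ and some fractional $\CO_K$-ideal $\fa$. Since $\CO_K$ is a PID, I normalize so that $L=\Omega\CO_K$ and $E_L\cong E_{\bar\varpi^i}$ over $K$. The claim then becomes
\[
L(\psi,1)/\Omega\in\tfrac{1}{\sqrt{-3}}\CO_K\setminus\CO_K .
\]

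\textbf{Step 2 (exact value via BSD).} By Monsky's theorem, used in Proposition \ref{cusp}, $E_{\bar\varpi^i}(K)$ is finite. I would use the full BSD formula in rank $0$ for these CM curves over $K$ (Burungale--Flach \cite{BF}). That formula gives $L(\psi,1)\neq 0$ and
\[
\frac{L(\psi,1)}{\Omega}\doteq \frac{\sharp\Sha\cdot\prod_v c_v}{\sharp E_{\bar\varpi^i}(K)^{2}}
\]
up to units and up to the Manin--Stevens constant $c$. Here $\sharp E(K)=3$. Writing the formula as an identity of $\CO_K$-ideals, the torsion term contributes $(\sqrt{-3})^{-2}$ in the "square root" normalization natural for a $K$-lattice. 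The Tamagawa factors come from Tate's algorithm using the reduction types in \eqref{conductor}. The factor at $\fp$ is prime to $3$, and the factor at $\sqrt{-3}$ is at most a single $\sqrt{-3}$. The $\Sha$ term is a norm, hence has even $\sqrt{-3}$-valuation. Counting valuations then gives $v_{\sqrt{-3}}(L(\psi,1)/\Omega)=-1$.

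\textbf{Main obstacle.} The hard part is controlling the $\sqrt{-3}$-adic contributions precisely. This requires the exact Tamagawa number at $\sqrt{-3}$, the parity of $v_{\sqrt{-3}}(\sharp\Sha)$, and the $\sqrt{-3}$-part of the Manin--Stevens constant $c$; the last is the delicate point treated later in the paper.

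If this valuation bookkeeping does not close, I would fall back on a more intrinsic argument that uses the cube-root function $\ff$. Assume $z_0\in L$, and apply Proposition \ref{LK} to the matrices $B$ and $D$ from the proof of Lemma \ref{L=L0}. For $\gamma\in\Gamma_0(N/3)$, $\ff$ transforms by $\nu(\text{period})\cdot\xi(\gamma)^{-1}$. Computing the action of $D$ in two ways (through its fixed point $W(\tau_r)$, and through the decomposition \eqref{omega12}/\eqref{omega32} into $B^2$, $A$ and an element of $\Gamma$) should force $\nu$ to be trivial on $L$. That contradicts the nontriviality of $\nu$ in Proposition \ref{LK}.
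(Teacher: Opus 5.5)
There is a genuine gap. Neither of your routes determines the exact $\sqrt{-3}$-adic size of $z_0$ relative to $L$, and both remain consistent with $\varphi(0)=O$.

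\textbf{The BSD route is circular.} Use the Burungale--Flach identity $L(f,1)\overline{L(f,1)}/(\Omega\bar\Omega c\bar c)=\sharp\Sha\cdot\prod_v c_v/\sharp E_{\bar\varpi^i}(K)^2$ with the correct local data. It becomes
\[
v_{\sqrt{-3}}\lrb{\frac{L(f,1)}{\Omega}}=-1+v_{\sqrt{-3}}(c)+\tfrac12\, v_{\sqrt{-3}}(\sharp\Sha).
\]
The torsion term contributes $-2$. The bad prime above $p$ is $\bar\fp$ for $E_{\bar\varpi^i}$; the reduction there has type IV or IV$^*$ with split quadratic $T^2-\frac14$, so $c_{\bar\fp}=3$. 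This is not prime to $3$, and it contributes $+1$. The factor $c_{\sqrt{-3}}\in\{1,2,4\}$ contributes nothing. Since $c\in\CO_K$, the formula only gives the bound $\geq -1$, which you already have from $\varphi(0)\in E_{\bar\varpi^i}[\sqrt{-3}]$. Equality requires $\sqrt{-3}\nmid c$ and $3\nmid\sharp\Sha$. Evenness of $v_{\sqrt{-3}}(\sharp\Sha)$ holds for any integer and is no help. These two facts are exactly what the ``Coprime to $3$'' part of Section \ref{c} deduces \emph{from} this proposition, and even $L(f,1)\neq 0$ is obtained there from it. So the ``delicate point treated later'' cannot be imported here without circularity.

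\textbf{The fallback produces no contradiction either.} Write $f|_\gamma=\epsilon_\gamma f$ and $P_\gamma=2\pi i\int_{i\infty}^{\gamma\infty}f(\tau)d\tau$. By Lemma \ref{L=L0} and Propositions \ref{mk}, \ref{LK}, $\ff(\gamma z)=m(\gamma)\ff(z)$ with $m(\gamma)=\nu(P_\gamma)\epsilon_\gamma^{-1}$, for every $\gamma$ in the group generated by $\Gamma_0(N/3)$ and $A$. Such an $m$ is automatically multiplicative.
\begin{itemize}
\item Since $P_D=0$ and $f|_D=f$, one has $m(D)=1$ outright.
\item Evaluating at the fixed point $W(\tau_r)$, or factoring $D$ as in \eqref{omega12}, \eqref{omega32}, only re-expresses this identity.
\item Under $z_0\in L$ you simply get $\nu(P_B)=\nu((1-\omega^2)z_0)=1$, which is consistent.
\end{itemize}
Nothing forces $\nu$ to be trivial on $L$.

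\textbf{The paper's own argument.} The paper works instead at the cusp $0$, which is fixed by $B$, using the expansion $\ff(Ww)=\nu(z_0)K(C\hat{f^c}(q))$ coming from $f|_W=Cf^c$. Be aware that this has the same self-consistency problem. Since $BW=WA^{-1}$ and $\hat{f^c}(\omega^{-1}q)=\omega^{-1}\hat{f^c}(q)$, the expansion gives $\ff(Bz)=\omega\ff(z)$. Hence $\nu(P_B)=1$, not $\omega^2$. The function $\ff j^{-1}$ vanishes to order $N-1$ in $q$ at $0$, so ``setting $z=0$'' carries no information.

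Any argument that only compares multipliers of $\ff$ is compatible with $\varphi(0)=O$. What is missing is a genuinely arithmetic input that fixes $v_{\sqrt{-3}}(L(f,1)/\Omega)$ independently of this proposition. Examples would be independent proofs that $\sqrt{-3}\nmid c$ and $3\nmid\sharp\Sha(E_{\bar\varpi^i}/K)$, or a direct computation of the modular symbol $\{0,\infty\}$ of $f$ modulo $\sqrt{-3}L$.
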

\begin{proof}
Since $\varphi(0)$ is defined over $K$, we know that $\varphi(0)$ is a $\sqrt{-3}$ torsion point on $E_{\bar\varpi^i}$. We assume $\varphi(0)=O$, then 
\[2\pi i\int_{i\infty}^{0}f(\tau)d\tau\in L\]
and the modular function $\ff(z)$ has a pole at $z=0$. The $q$-expansion of $\ff(z)$ at the cusp $0$ is the $q$-expansion of $\ff(z)\mid_W$ at the cusp $\infty$. The proof of \cite[Theorem 12.5]{Iwanbook} shows 
$f|_{W}=Cf^c$ for some constant $C$.
By the equation 
\begin{eqnarray*}
\ff(z)\mid_W&=&K\lrb{2\pi i\int_{i\infty}^{W(z)}f(\tau)d\tau}\\
&=&\nu\lrb{2\pi i\int_{i\infty}^{0}f(\tau)d\tau }K\lrb{2\pi i\int_{0}^{W(z)}f(\tau)d\tau}\\
&=&\nu\lrb{2\pi i\int_{i\infty}^{0}f(\tau)d\tau }K\lrb{2\pi i C\int_{i\infty}^{z}f^c(\tau)d\tau}\\
&=&\nu\lrb{2\pi i\int_{i\infty}^{0}f(\tau)d\tau }K\lrb{ C\sum_{n=1}^\infty \frac{\bar{a}_n}{n}q^n}
\end{eqnarray*}
we know that the leading term of the $q$-expansion of $\ff(z)$ at $0$ is $(Cq)^{-1}$ up to a cubic root of unity.
As a result, the modular function $\ff(z)j(z)^{-1}$ is holomorphic and nonzero at $z=0$. Since
\begin{eqnarray*}
K\lrb{2\pi i\int_{i\infty}^{Bz}f(\tau)d\tau}&=&K\lrb{2\pi i\int_{i\infty}^{B\infty}f(\tau)d\tau +2\pi i\int_{B\infty}^{Bz}f(\tau)d\tau}\\
&=&\nu\lrb{2\pi i\int_{i\infty}^{B\infty}f(\tau)d\tau }\omega^{-2} K\lrb{2\pi i\int_{i\infty}^{z}f(\tau)d\tau},
\end{eqnarray*}
we know that 
$$\ff(Bz)j(Bz)^{-1}=\nu\lrb{2\pi i\int_{i\infty}^{B\infty}f(\tau)d\tau }\omega^{-2}\ff(z)j(z)^{-1}.$$
Letting $z=0$, we get
\begin{equation}\label{action0}
\nu\lrb{2\pi i\int_{i\infty}^{B\infty}f(\tau)d\tau }=\omega^2.
\end{equation}
So,
\[2\pi i\int_{i\infty}^{B\infty}f(\tau)d\tau\notin \sqrt{-3}L.\]
But this contradicts the assumption that $\varphi(0)=O$, because 
\[2\pi i\int_{i\infty}^{B\infty}f(\tau)d\tau=(1-\omega^2)2\pi i\int_{i\infty}^{0}f(\tau)d\tau.\]
\end{proof}

\section{The Manin-Stevens constant}\label{c}

Recall that we have the optimal parametrization $\varphi: X_1(N)\rightarrow E_{\bar\varpi^i}$ defined over $K$ and $\omega_{E_{\bar\varpi^i}}$ is the Neron differential. Since the action of Hecke operators is compatible with the map $\varphi$, $\varphi^*\omega_{E_{\bar\varpi^i}}$ is an eigenvector for the Hecke algebra, with the same eigenvalues as $f(q)\frac{dq}{q}$. By multiplicity one principle $\varphi^*\omega_{E_{\bar\varpi^i}}=c\cdot f(q)\frac{dq}{q}$ with $c\in K$. 
The ideal $(c)$ is the Manin ideal defined in \cite{GL01}. It is easy to see that the Manin-Stevens constant of $E_{\varpi^i}$ is the complex conjugation $\bar c$. Let $E_{p^{2i}}: y^2=x^3+\frac{p^{6-2i}}{4}$ and $c'\in\BZ$ be the Manin-Stevens constant of $E_{p^{2i}}$.
In this section we will prove that 
\begin{thm}\label{M-S}
The Manin-Stevens constant $c$ of $E_{\bar\varpi^i}$ is a root of unity in $\CO_K$. The Manin-Stevens constant $c'$ of $E_{p^{2i}}$ is $\pm 1$. 
\end{thm}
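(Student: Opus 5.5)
We argue prime by prime: we show that $c$ is a unit at every prime of $\CO_K$, so that $(c)=\CO_K$ and $c\in\CO_K^\times=\{\pm1,\pm\omega,\pm\omega^2\}$. The statement for $c'$ then follows by the same local analysis, since $E_{p^{2i}}$ is over $\BQ$ and $c'\in\BZ$. First we check that $c$ is integral. The optimal parametrization $\varphi$ extends to the Néron model, and $X_1(N)$ has the Katz--Mazur integral model on which $f(q)\frac{dq}{q}$ is regular at $\infty$. The pullback of the Néron differential $\omega_{E_{\bar\varpi^i}}$ is then an integral differential, so $c\in\CO_K$; this is the argument of Edixhoven and Cesnavicius. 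It remains to exclude the primes dividing $(c)$, and we split them into three groups: $\fq\nmid 3p$, $\fq\mid p$, and $\fq=(\sqrt{-3})$.

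\textbf{Primes $\fq\nmid 3p$.} Here $E_{\bar\varpi^i}$ has good reduction, and the residue characteristic $q$ is prime to $N$. We apply the result of Cesnavicius \cite{Ce,Ce1}: the map $J_1(N)\to A_f\to E_{\bar\varpi^i}$ is optimal, and $J_1(N)$ has good reduction at $q$ with a smooth model. Hence the cokernel of $\mathrm{Lie}$ on Néron models has no $q$-torsion, and so $\fq\nmid c$. For $q=2$ we will use \cite{Ce1}, which treats the ramification-free case uniformly.

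\textbf{Primes $\fq\mid p$.} Here $E_{\bar\varpi^i}$ has additive but potentially good reduction; it acquires good reduction over $K_\fq(\sqrt[3]{\bar\varpi})$. The curve is a cubic twist of a curve $E_1$ of conductor a power of $3$. The plan is to compare periods. The form $f$ is the twist $f_1\otimes\chi$, where $f_1$ is the CM form of level $27$ attached to $E_1$ and $\chi$ is the cubic character of conductor $p$ given by the cubic residue symbol. Twisting by $\chi$ relates the period lattice $L$ to $\bar\varpi^{i/3}$-scalings of the period lattice of $f_1$, through Gauss-sum identities of the form $\sum_a\bar\chi(a)f_1|\matrixx{1}{a/p}{0}{1}$. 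Because $f_1$ has Manin constant $1$, and the Gauss sum $g(\chi)$ has $\fq$-adic valuation exactly matching $\bar\varpi^{i/3}$ up to units (by Stickelberger), we obtain that $L$ and the Néron lattice of $E_{\bar\varpi^i}$ agree $\fq$-adically. Then Lemma \ref{cm}, Lemma \ref{L=L} and Proposition \ref{cusp} let us control the index of $L$ by powers of $\sqrt{-3}$ only, and this forces $v_\fq(c)=0$.

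\textbf{The prime $\fq=(\sqrt{-3})$.} This is the main obstacle, since the reduction there is wildly additive and neither Cesnavicius' method nor twisting works. Our plan is to use the full BSD formula of Burungale--Flach \cite{BF} for the rank $0$ curves $E_{\bar\varpi^i}$ over $K$ (rank $0$ by Proposition \ref{cusp}). On one side, compute $L(E_{\bar\varpi^i}/K,1)=L(f,1)L(f^c,1)$ as modular symbols: these are $2\pi i\int_{i\infty}^0 f$, which by Proposition \ref{torsion0} is a primitive $\sqrt{-3}$-torsion element of $L/\sqrt{-3}L$, i.e. a lattice element divided by $\sqrt{-3}$. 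On the other side, BSD gives the product of Néron periods, Tamagawa numbers, $\Sha$ and $|E(K)_{\tor}|^2=9$. Expressing the lattice $L$ as $c$ times the Néron lattice, we get an identity in which $|c|^2$ appears. Comparing $3$-adic valuations, we need the Tamagawa numbers at $\sqrt{-3}$ and $\fq\mid p$ (computed via Tate's algorithm) and the $3$-part of $\Sha$. The $3$-part of $\Sha$ is trivial by a $\sqrt{-3}$-descent, the same input as Monsky's theorem. The comparison should then force $v_{\sqrt{-3}}(c)=0$. The delicate point is bookkeeping of the factor $\sqrt{-3}$ in the period, in $|E(K)_{\tor}|$, and in the Tamagawa numbers, so that they cancel exactly.

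\textbf{The constant $c'$.} $E_{p^{2i}}$ is the $\BQ$-curve whose base change to $K$ is isogenous to $E_{\bar\varpi^i}\times E_{\varpi^i}$, via restriction of scalars and $A_f$. Away from $3$ and $p$ we again use \cite{Ce}. At $p$ and $3$ we repeat the BSD comparison over $\BQ$ with \cite{BF}, using the same modular symbol $\int_{i\infty}^0$. Alternatively, we relate $c'$ to $c\bar c$ through the isogeny $A_f\to E_{p^{2i}}$ over $K$. This gives $c'\in\BZ$ a unit, so $c'=\pm1$.
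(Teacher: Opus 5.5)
Your treatment of the primes $\fq\nmid 3p$ and of $(\sqrt{-3})$ matches the paper. At $\sqrt{-3}$, write $L(f,1)/\Omega=\mu/\sqrt{-3}$ with $\mu$ prime to $\sqrt{-3}$, and use $|E_{\bar\varpi^i}(K)|=3$ and $c_{\bar\varpi}=3$. The Burungale--Flach identity then becomes $\mu\bar\mu=c\bar c\,|\Sha(E_{\bar\varpi^i}/K)|\,c_{\sqrt{-3}}$, which kills the $3$-parts of $c\bar c$ and $\Sha$ at once, so your separate descent is not needed.

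\textbf{Primes above $p$.} This step rests on a false relation. $f$ is not $f_1\otimes\chi$ with $f_1$ the level-$27$ form of $E_1$ and $\chi$ a cubic Dirichlet character of conductor $p$. Such a twist has level divisible by $p^2$, whereas $f$ has level $N=9p$ or $27p$. Equivalently, $\psi$ has conductor $(\sqrt{-3})^k(\varpi)$, so it differs from the Hecke character of $E_1$ by a character ramified at $\varpi$ but not at $\bar\varpi$. Any $\chi\circ\Norm$ is ramified at both, so no operator $\sum_a\bar\chi(a)f_1|\matrixx{1}{a/p}{0}{1}$ produces $f$.

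The Dirichlet twist goes the other way (Proposition \ref{twist}): $f\otimes\bar\chi=g$ is the newform of the $\BQ$-curve $E_{p^{2i}}$, whose Manin constant $c'$ is not known in advance. The paper therefore bootstraps:
\begin{enumerate}
\item Twisting in both directions, with $f=\sum_k a_p^k\tilde f(p^k\tau)$, gives $L_f=\frac{\tau(\chi)}{m}L_g$ with $m$ prime to $\varpi$.
\item Combining this with $L_{E_{\bar\varpi^i}}=cL_f$, $L_{E_{p^{2i}}}=c'L_g$, the explicit relation $\sqrt[3]{p\varpi}\,L_{E_{p^2}}=L_{E_{\bar\varpi}}$ (resp.\ with $\sqrt[3]{p/\bar\varpi^2}$), Cesnavicius's result that $c'$ is supported at $\{3,p\}$, and the $3$-adic result leaves an unknown $m=\bar\varpi^j$.
\item This $\bar\varpi$-adic ambiguity is removed by the Atkin--Lehner pseudo-eigenvalue. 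From $f|_W=Cf^c$ one gets $L_f=CL_{f^c}$, and $\overline{L_f}=L_{f^c}$ expresses $C$ in terms of $j$. The Gauss-sum formula makes $\Norm(\fm)C$ integral, which pins down $j$.
\end{enumerate}
Nothing in your sketch plays the role of step 3. Lemma \ref{cm}, Lemma \ref{L=L} and Proposition \ref{cusp} concern $L$ alone and cannot detect a $\bar\varpi$-adic discrepancy between $L$ and the N\'eron lattice. Also, $\tau(\chi)^3=pJ(\chi,\chi)$, so $\tau(\chi)$ is $\sqrt[3]{p\varpi}$ or $\sqrt[3]{p\bar\varpi}$ up to units, not $\bar\varpi^{i/3}$.

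\textbf{The constant $c'$.} Your argument here also fails, in both versions.
\begin{itemize}
\item The isogeny route: $E_{p^{2i}}$ is an elliptic curve, and $E_{\bar\varpi^i}\times E_{\varpi^i}$ is the $K$-splitting of the surface $A_f$, not of $E_{p^{2i}}$. In fact $E_{p^{2i}}$ is not isogenous over $K$ to either factor. It has additive reduction at both $\varpi$ and $\bar\varpi$, while $E_{\bar\varpi^i}$ is good at $\varpi$ and $E_{\varpi^i}$ is good at $\bar\varpi$. So there is no $K$-map $A_f\to E_{p^{2i}}$ through which to compare $c'$ with $c\bar c$.
\item The BSD route: a rank-$0$ comparison is unavailable. $E_{p^{2i}}$ is the cube-sum curve of $p^{2i}$, whose root number is $-1$ for $p\equiv 4,7 \bmod 9$ (that is why Theorem \ref{main} is expected). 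Hence $L(g,1)=0$, the modular symbol $\int_{i\infty}^0 g$ vanishes, and no Burungale--Flach type identity applies.
\end{itemize}
In the paper, $c'=\pm1$ is a byproduct of the lattice identity $L_{E_{\bar\varpi^i}}=\frac{c}{c'}\frac{\tau(\chi)}{m}L_{E_{p^{2i}}}$. Once $c$ is prime to $3$, this forces $c'$ prime to $3$, and the $p$-adic bookkeeping above removes $p$.
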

We will see that $c'=\pm 1$ is a byproduct of the proof that $c\in\CO_K^\times$ in the end of this section.     By \cite[Proposition 4.1 and 4.2]{GL01}, $c\in\CO_K$ is coprime to $2N$. To prove Theorem \ref{M-S}, it is enough to prove that $c$ is coprime to $2,3,p$. 

\subsection{Coprime to $2$}

 Let $\widetilde\varphi: J_1(N)\rightarrow E_{\bar\varpi^i}$ be the optimal quotient defined over $K$. Let $\CE_{\bar\varpi^i}$ be the Neron model of $E_{\bar\varpi^i}$ over $\CO_K$ and $\CJ_{\CO_K}$ be the Neron model of $J_1(N)$ over $\CO_K$. Then the map $\varphi$ extends to $\CJ_{\CO_K}\rightarrow \CE_{\bar\varpi^i}$ by the Neron mapping property.
The arguments in \cite[section 2.2, 2.4]{Ce}(Note that this paper is the first version which is different from the published version \cite{Ce1}) go through if we consider $\CE_{\bar\varpi^i}$, replacing $\BQ$ with $K$, $\BZ$ with $\CO_K$ and $\CJ_\BZ$ with $\CJ_{\CO_K}$ therein. Then similar results as \cite[Theorem 2.5]{Ce} show that the Manin-Stevens constant $c$ is coprime to $2$.

\subsection{Coprime to $3$}By Proposition \ref{torsion0}, the image of the cusp $0$ is a primitive $\sqrt{-3}$ torsion point under the modular parametrization. So we know that 
$$L(f,1)=2\pi i\int_{i\infty}^0 f(\tau)d\tau\neq 0.$$ 
and $\frac{L(f,1)}{\Omega}=\frac{m}{\sqrt{-3}}$ with $m\in\CO_K$ coprime to $\sqrt{-3}$ where $\Omega$ is a $\CO_K$ base of the period lattice of $f$.
By Theorem 1 and Corollary 1 in \cite{BF}, we have
\[\frac{L(E_{\bar\varpi^i} / K, 1)}{\Omega(E_{\bar\varpi^i})}=\frac{|\Sha(E_{\bar\varpi^i} / K)|}{|E_{\bar\varpi^i}(K)|^2} \cdot \prod_vc_v,\]
and
\[\frac{L(f, 1){\ov{L(f, 1)}}}{\Omega\bar\Omega c\bar{c}}=\frac{|\Sha(E_{\bar\varpi^i} / K)|}{|E_{\bar\varpi^i}(K)|^2} \cdot \prod_vc_v\]
where $c$ is the Manin-Steven constant and $c_v$ is the Tamagawa number at different places.  Since $|E_{\bar\varpi^i}(K)|=3$ and $c_{\bar\varpi}=3$ and $c_{\sqrt{-3}}=1,2,4$ and $c_v=1$ for $v\neq \bar\varpi, \sqrt{-3}$. We see that $c$ is coprime to $\sqrt{-3}$ and the primary 3-part of $|\Sha(E_{\bar\varpi^i} / K)|$ is trivial.

\subsection{Coprime to $p$}
Let 
\begin{equation}\label{root}
W(\psi)=\frac{\sqrt{-1}\psi_\infty(\fm)}{N(\fm)^{1/2}}\sum_{a\in\CO_K/\fm}\ov{\lrb{\frac{\varpi^i}{\fa}}}_3e\lrb{\text{Tr}\frac{a}{b}}
\end{equation}
be the Gauss sum of $\psi$ as defined in \cite[(12.21)]{Iwanbook}(see also \cite[(3.3.8)]{Miyake}) where $b$ is a generator of the ideal $\sqrt{-3}\fm$.  By \cite[(12.22)]{Iwanbook}, $|W(\psi)|^2=N(\fm)$. Let $C=-W(\psi)N(\fm)^{-1/2}$, the proof of \cite[Theorem 12.5]{Iwanbook} shows 
\begin{equation}\label{AL}
f|_{W}=Cf^c. 
\end{equation}
For any $\gamma\in\Gamma_1(N)$, there exist $\gamma'\in\Gamma_1(N)$ such that $\gamma W=W\gamma'$. Then we have
\[\int^{\gamma\infty}_{\infty}f(\tau)d\tau=\int^{\gamma W\infty}_{W\infty}f(\tau)d\tau=\int^{W\gamma'\infty}_{W\infty}f(\tau)d\tau=\beta\frac{\varpi^{\frac{i}{3}}}{\bar\varpi^{\frac{i}{3}}}\int^{\gamma'\infty}_{\infty}f^c(\tau)d\tau.\]
So the period lattice 
\begin{equation}\label{ffc}
L_f=CL_{f^c},
\end{equation}
where
$$L_f=\set{2\pi i \int_{i\infty}^{\gamma \infty}f(\tau) d\tau\mid \gamma\in\Gamma_1(N)},$$
$$L_{f^c}=\set{2\pi i \int_{i\infty}^{\gamma \infty}f^c(\tau) d\tau\mid \gamma\in\Gamma_1(N)}.$$
We use the notation $L_f,L_{f^c}$ in place of $L,L^c$ here in order to compare them with the period lattice of other modular forms below. 

Let $g(\tau)$ be the newform of the elliptic curve $E_{p^{2i}}: y^2=x^3+\frac{p^{6-2i}}{4}$. As before, assume $f(\tau)$ has the $q$-expansion $f(\tau)=\sum_{n=1}^\infty a_n q^n$ at infinity and $g(\tau)$ has the $q$-expansion $g(\tau)=\sum_{n=1}^\infty b_n q^n$ at infinity. Then $b_n\in\BZ$ for any $n$. We firstly show that $g$ is the twist of $f$ by a cubic Dirichlet character of modulus $p$. 
\begin{proposition}\label{twist}
Let $\bar\chi(n)={\lrb{\frac{\varpi^i}{\ell}}_3}$ and $\chi(n)={\lrb{\frac{\bar\varpi^i}{\ell}}_3}=\bar\chi(n)^{-1}$ be the primitive character of conductor $p$. 
The newform $g$ is the cubic twist of $f$ (resp. $f^c$) by the Dirichlet character $\ov{\chi}$ (resp. $\chi$), that is 
\[g(\tau)=\sum_{n=1}^{\infty}{\bar\chi}(n)a_nq^n.\]
resp.
\[g(\tau)=\sum_{n=1}^{\infty}\chi(n)\bar{a}_nq^n.\]
\end{proposition}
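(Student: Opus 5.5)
Since both sides are Hecke eigenforms (or at least Fourier series with multiplicative coefficients), I will compare Euler factors, i.e.\ the coefficients at primes. Concretely, I will show that $b_\ell=\bar\chi(\ell)a_\ell$ for all primes $\ell\nmid 3p$. For the primes $\ell\mid 3p$ I will argue separately. The twist $\sum\bar\chi(n)a_nq^n$ is a cusp form of level dividing $\mathrm{lcm}(N,p^2)$ whose $\ell$-coefficients agree with those of $g$ away from $3p$. Strong multiplicity one then forces the twisted form to be the newform $g$, provided the twist is itself new with $a_\ell$-coefficients vanishing at the bad primes. The statement for $f^c$ follows by complex conjugation, using that $b_n\in\BZ$ and $\chi=\bar\chi^{-1}$.

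The key computation uses Theorem~\ref{cmcharacter} and the explicit Hecke characters. For a split prime $\ell=\varpi_\fq\bar\varpi_\fq$ with $\varpi_\fq\equiv1\mod 3$, formula \eqref{def} gives
\[a_\ell=\psi(\fq)+\psi(\bar\fq)=\ov{\lrb{\tfrac{\varpi^i}{\varpi_\fq}}_3}\varpi_\fq+\ov{\lrb{\tfrac{\varpi^i}{\bar\varpi_\fq}}_3}\bar\varpi_\fq.\]
Applying Theorem~\ref{cmcharacter} with $D=p^{6-2i}=\varpi^{6-2i}\bar\varpi^{6-2i}$, the Hecke character of $E_{p^{2i}}$ is $\psi_D(\fq)=\ov{\lrb{\tfrac{p^{6-2i}}{\varpi_\fq}}_6}\varpi_\fq=\ov{\lrb{\tfrac{p^{3-i}}{\varpi_\fq}}_3}\varpi_\fq$. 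I then need the identity
\[\lrb{\tfrac{p^{3-i}}{\varpi_\fq}}_3=\lrb{\tfrac{\varpi^i}{\varpi_\fq}}_3\cdot\lrb{\tfrac{\varpi^i}{\ell}}_3^{\pm1},\]
where $\lrb{\tfrac{\varpi^i}{\ell}}_3$ denotes $\bar\chi(\ell)$. This identity follows from $p^{3-i}\equiv p^{-i}$ modulo cubes, together with cubic reciprocity $\lrb{\tfrac{\varpi}{\varpi_\fq}}_3=\lrb{\tfrac{\varpi_\fq}{\varpi}}_3$ for primary elements. Here one has to identify $\bar\chi(\ell)=\lrb{\tfrac{\ell}{\varpi}}_3^{\pm}$, that is, interpret $\chi$ as $n\mapsto\lrb{\tfrac{n}{\bar\varpi}}_3^{i}$ etc.\ (I would fix this normalization of the notation in the statement first). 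The same manipulation applies to $\bar\fq$, and summing the two terms gives $b_\ell=\bar\chi(\ell)a_\ell$. For inert $\ell\equiv2\mod3$ both sides vanish. I will also check that the sign/conjugation conventions match, by testing one small $p$ numerically in the spirit of the paper's Sage computations, since it is easy to get $\chi$ versus $\bar\chi$ wrong.

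For the bad primes, $a_n=0$ unless $n\equiv1\mod3$, so $3$ causes no issue once the levels agree at $3$. Here I will check that the conductor of $E_{p^{2i}}$ at $3$ matches that of $E_{\varpi^i}$ via \eqref{conductor}, or else use that twisting by a character of conductor prime to $3$ does not change the $3$-component. At $p$, the twist has $\bar\chi(p)=0$, and $b_p=0$ because $E_{p^{2i}}$ has additive reduction at $p$.

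The main obstacle is levels and newness. The twist of a form of level $N$ (with $p\,\|\,N$) by a character of conductor $p$ a priori has level $Np$, and it could fail to be new. To settle this I will compare with $g$ directly. Both $g$ and $\sum\bar\chi(n)a_nq^n$ are eigenforms for all $T_\ell$ with $\ell\nmid 3p$, sharing eigenvalues. By strong multiplicity one, the twist lies in the oldspace generated by $g$. Since both have leading coefficient $1$ and vanishing $p$- and $3$-power coefficients (the twist kills $n$ divisible by $p$, and $a_n=0$ for $3\mid n$), the only combination $\sum c_d\,g(d\tau)$ with those vanishing coefficients and $c_1=1$ is $g$ itself. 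This gives the equality.
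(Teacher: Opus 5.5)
Your proposal is correct and is essentially the paper's proof. The paper likewise reduces to prime coefficients and disposes of $\ell=3$, $\ell\equiv 2\bmod 3$ and $\ell=p$ by vanishing. For split $\ell=\pi\bar\pi$ it proves the residue-symbol identity using only multiplicativity, $\overline{(a/\pi)_3}=(\bar a/\bar\pi)_3$ and $(p^{3i}/\pi)_3=1$; cubic reciprocity is not needed for the identity itself. Your strong-multiplicity-one plus oldform-elimination step is a more careful version of the paper's one-line claim that matching prime coefficients suffices, since it also handles the nebentypus, which governs the $q^{\ell^2}$ coefficients at inert $\ell$.

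On the sign you left open: with $a_\ell$ taken literally from \eqref{def}, i.e.\ built from $\varpi^i$, the identity holds with exponent $+1$ and gives $b_\ell=\overline{\bar\chi(\ell)}\,a_\ell=\chi(\ell)a_\ell$. The paper's proof instead writes $a_\ell=\overline{(\bar\varpi^i/\pi)_3}\,\pi+\overline{(\bar\varpi^i/\bar\pi)_3}\,\bar\pi$, which is the convention that $f$ is attached to $E_{\bar\varpi^i}$. It is this convention that yields the twist by $\bar\chi$ as stated, so you should adopt it.
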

\begin{proof}

Since both $f$ and $g$ are eigenforms for all Hecke operators (including the bad primes where the Hecke operators is in fact the $U_p$-operators), it is enough to prove that $b_\ell=\chi(\ell) a_\ell$ for all prime $\ell$.
Since $a_\ell=b_\ell=0$ for $\ell=3$, $\ell\equiv 2\mod 3$ and $b_p=0$, we just consider the primes $\ell$ such that $\ell\equiv 1\mod 3$ and $\ell\neq p$. In this case, let $\ell=\pi\bar\pi$ with $\pi\equiv 1\mod 3$. Then
$$a_\ell=\ov{\lrb{\frac{\bar\varpi^i}{\pi}}_3}\pi+\ov{\lrb{\frac{\bar\varpi^i}{\ov\pi}}_3}\ov\pi,$$
$$b_\ell=\ov{\lrb{\frac{p^{2i}}{\pi}}_3}\pi+\ov{\lrb{\frac{p^{2i}}{\ov\pi}}_3}\ov\pi.$$
Since 
\[{\lrb{\frac{p^i\varpi^i}{\bar\pi}}_3}^{-1}={\lrb{\frac{p^i\bar\varpi^i}{\pi}}_3}\ \text{and}\ {\lrb{\frac{p^i\bar\varpi^i}{\pi}}_3}{\lrb{\frac{p^i\varpi^i}{\pi}}_3}=\lrb{\frac{p^{3i}}{\pi}}_3=1,\]
we see that
\[{\lrb{\frac{p^i\varpi^i}{\bar\pi}}_3}={\lrb{\frac{p^i\varpi^i}{\pi}}_3}.\]
Since 
\[{\lrb{\frac{p^i\varpi^i}{\bar\pi}}_3}{\lrb{\frac{p^i\varpi^i}{\pi}}_3}={\lrb{\frac{p^i\varpi^i}{\ell}}_3},\]
we get
\[{\lrb{\frac{p^i\varpi^i}{\bar\pi}}_3}={\lrb{\frac{p^i\varpi^i}{\pi}}_3}=\ov{\lrb{\frac{p^i\varpi^i}{\ell}}_3}.\]
As a result we have
\[b_\ell={\lrb{\frac{p^i\varpi^i}{\ell}}_3} a_\ell \]
It can be easily check that ${\lrb{\frac{p^i}{\ell}}_3}=1$, so
\[b_\ell=\bar\chi(\ell)a_\ell.\]

The proof for $f^c$ is similar.
\end{proof}


\begin{lemma}\label{cusp1}
Let $b/d$ be a cusp such that $d\equiv 1\mod N$, then for any $j\in\BZ_{\geq 0}$ and $u\in\set{1,\cdots,p-1} $, we have $p^jb/d+u/p$ is $\Gamma_1(Np)$ equivalent to $u/p$.
\end{lemma}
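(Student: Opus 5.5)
We will write down an explicit matrix $\gamma\in\Gamma_1(Np)$ carrying $u/p$ to $p^jb/d+u/p$. The first step is to put the target cusp over a single denominator:
\[
\frac{p^jb}{d}+\frac{u}{p}=\frac{p^{j+1}b+ud}{pd}.
\]
Because $d\equiv 1\mod N$ and $p\mid N$, we have $\gcd(d,p)=1$. Also $\gcd(b,d)=1$, so the numerator $p^{j+1}b+ud$ is coprime to $d$. Modulo $p$ the numerator is congruent to $ud\not\equiv 0$, since $1\le u\le p-1$. Hence the fraction is in lowest terms with denominator $pd$.

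Next we look for $\gamma$ of the shape $\gamma=\matrixx{1}{x}{0}{1}\matrixx{1}{0}{y}{1}$, or more directly a single matrix $\gamma=\matrixx{\alpha}{\beta}{\gamma_{c}}{\delta}$, with $\gamma_c\equiv 0\mod Np$ and $\alpha\equiv\delta\equiv 1\mod Np$. The simplest try is to conjugate the statement so that it becomes one about the cusp $p^jb/d$. Setting $T=\matrixx{1}{u/p}{0}{1}$, the claim is that $T^{-1}\gamma T$ sends $0$ to $p^jb/d$. So we want $\delta'=\matrixx{p^jb'}{\ast}{d}{\ast}$-type matrices, and then we must check that conjugating by $T$ preserves the congruence conditions.

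Concretely, I will choose integers $s,t$ with $p^jb\,t-d\,s=1$ and take $\eta=\matrixx{p^jb}{s}{d}{t}\in \mathrm{SL}_2(\BZ)$, so that $\eta(0)=s/t$. This is not quite what we want. The correct move is to take $\eta=\matrixx{a'}{p^jb}{c'}{d}$, which gives $\eta(\infty)=a'/c'$. That is also not it, so instead we use the target matrix directly:
\[
\gamma=\matrixx{1+\frac{u}{p}\cdot p^{j+1}\cdots}{\ast}{\ast}{\ast}.
\]
Rather than guess, the cleaner route is to prove equivalence via the standard criterion for $\Gamma_1(M)$-equivalence of cusps, with $M=Np$. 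Two cusps $a_1/c_1$ and $a_2/c_2$ in lowest terms are $\Gamma_1(M)$-equivalent iff $(a_2,c_2)\equiv\pm(a_1+kc_1,c_1)\mod M$ for some $k$, working modulo $\gcd(c_1,M)$ in the first entry (Diamond–Shurman, Prop. 3.8.3). So the plan reduces to two checks.

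The first check is the denominators. We need $pd\equiv p\mod Np$, which holds because $d\equiv1\mod N$.

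The second check is the numerators. We need $p^{j+1}b+ud\equiv u+kp\mod \gcd(p,Np)=p$. This holds because $p^{j+1}b+ud\equiv u\mod p$, using $d\equiv 1\mod p$.

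The signs match, so the criterion gives the equivalence. The only real obstacle is stating the criterion correctly, in particular that the numerator comparison is taken modulo $\gcd(c,M)$. If one prefers to avoid citing it, it can be verified by hand: the criterion lets one build $\gamma=\matrixx{A}{B}{C}{D}$ from the lifted congruences via the surjectivity of $\mathrm{SL}_2(\BZ)\to\mathrm{SL}_2(\BZ/M\BZ)$.
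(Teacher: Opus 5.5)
Your final argument is correct and is essentially the paper's: rewrite the cusp as $\frac{p^{j+1}b+ud}{pd}$ and apply the standard criterion for $\Gamma_1(Np)$-equivalence of cusps. The paper cites Cremona's Lemma~3.2 and you cite Diamond--Shurman Prop.~3.8.3; either way the checks are $pd\equiv p\mod Np$ and $p^{j+1}b+ud\equiv u\mod p$, and your verification that the combined fraction is in lowest terms fills in a detail the paper leaves implicit. Delete the abandoned explicit-matrix attempts, including the incomplete display with $\cdots$, since they play no role in the proof.
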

\begin{proof}
Note that $\frac{p^jb}{d}+\frac{u}{p}=\frac{p^{j+1}b+ud}{dp}$. Since $d\equiv 1\mod N$ and $(Np,p)=p$, the result is deduced from \cite[Lemma 3.2]{Cremona}.
\end{proof}

Let $\tilde{f}(\tau)=f(\tau)-B_pU_pf(\tau)=f(\tau)-a_pf(p\tau)=\sum_{(n,p)=1}a_nq^n$, then $\tilde{f}(\tau)$ is a modular form on $\Gamma_0(Np)$ with Nebentypus $\xi$ by \cite[section 1]{Li75}. Inductively we can see that

\begin{equation}\label{fexpand}
f(\tau)=\tilde{f}(\tau)+a_pf(p\tau)=\sum_{k=0}^{k_0-1} a_p^k \tilde{f}(p^k\tau)+a_p^{k_0}f(p^{k_0}\tau)=\sum_{k=0}^{\infty} a_p^k \tilde{f}(p^k\tau).
\end{equation}

For $\gamma\in\Gamma$,
\[\int_{i\infty}^{\gamma\infty}f(\tau)d\tau=\int_{0}^{\gamma 0}f(\tau)d\tau\]
We will consider the latter integral in the following.

By Proposition \ref{twist}, $\tilde{f}(\tau)=g_{\chi}(\tau)$ is the cubic twist of $g$ by $\chi$. Then by \cite[section 3]{AL} or \cite[section 8]{MTT}, 
\[\tilde{f}(\tau)=\frac{1}{\tau(\bar{\chi})}\sum_{u=1}^{p-1}\bar\chi(u)g\lrb{\tau+\frac{u}{p}}=\frac{1}{\tau(\bar{\chi})}\sum_{u=1}^{p-1}\bar\chi(u)g\lrb{\tau+\frac{-u}{p}}\]
where $\tau(\bar\chi)=\sum_{u=0}^{p-1}\bar{\chi}(u)e^{2\pi i\frac{u}{p}}$ is the Gauss sum of $\bar{\chi}$. So for any $\gamma\in\Gamma_1(N)$, 
\begin{eqnarray*}
\int_{0}^{\gamma 0}\tilde{f}(p^k\tau)d\tau &=&\frac{1}{p^k}\int_{0}^{p^k\gamma 0}\tilde{f}(\tau)d\tau \\
&=&\frac{1}{p^k\tau(\bar\chi)}\sum_{u=1}^{p-1}\bar\chi(u)\int_{\frac{u}{p}}^{p^k\gamma 0+\frac{u}{p}}g(\tau)d\tau
\end{eqnarray*}
This together with \eqref{fexpand} gives
\begin{equation}\label{integral-twist}
\int_{0}^{\gamma 0}f(\tau)d\tau=\sum_{k=0}^{n-1}\frac{a_p^k}{p^k\tau(\bar\chi)}\sum_{u=1}^{p-1}\bar\chi(u)\int_{\frac{u}{p}}^{p^k\gamma 0+\frac{u}{p}}g(\tau)d\tau+\frac{a_p^{n}}{p^n}\int_{0}^{p^n\gamma 0}f(\tau)d\tau.
\end{equation}
We assume $n_0$ is the least positive integer such that $p^{n_0}\gamma 0-\gamma0\in\BZ$.  Then, since $a_p=\bar\varpi$, we see
\[\lrb{1-\frac{1}{\varpi^{n_0}}}\int_{0}^{\gamma 0}f(\tau)d\tau=\sum_{k=0}^{n_0-1}\frac{1}{\varpi^k\tau(\bar\chi)}\sum_{u=1}^{p-1}\bar\chi(u)\int_{\frac{u}{p}}^{p^k\gamma 0+\frac{u}{p}}g(\tau)d\tau\]
Since $\chi(-1)=1$, $\tau(\bar\chi)=\ov{\tau(\chi)}$. It is well-known that $\tau(\chi)\tau(\bar\chi)=\tau(\chi)\ov{\tau(\chi)}=p$, then we have
\[\int_{0}^{\gamma 0}f(\tau)d\tau=\frac{\tau(\chi)}{\bar\varpi(\varpi^{n_0}-1)}\sum_{k=0}^{n_0-1}{\varpi^{n_0-1-k}}\sum_{u=1}^{p-1}\bar\chi(u)\int_{\frac{u}{p}}^{p^k\gamma 0+\frac{u}{p}}g(\tau)d\tau\]
Since $\Gamma_1(N)$ is finitely generated, then by Lemma \eqref{cusp1}, the period lattices of $f$ and $g$ satisfy 
\begin{equation}\label{fcg}
L_f\subset \frac{1}{m'}\tau(\chi)L_g
\end{equation}
for some $m'\in\CO_K$ coprime to $\varpi$. Here 
\[L_g=\set{2\pi i \int_{i\infty}^{\gamma \infty}g(\tau) d\tau\mid \gamma\in\Gamma_1(Np)}.\]
Since $g=f_{\bar\chi}$ is the cubic twist of $f$ by $\bar\chi$, similar argument shows for any $\gamma'\in\Gamma_1(Np)$,
\begin{equation}\label{sqrt-3}
\int_{\infty}^{\gamma' \infty}g(\tau)d\tau=\int_{0}^{\gamma' 0}g(\tau)d\tau=\frac{1}{\tau(\chi)}\sum_{v=1}^{p-1}\chi(v)\int_{\frac{v}{p}}^{\gamma'0+\frac{v}{p}}f(\tau)d\tau
\end{equation}
This means 
\begin{equation}\label{gcf}
\tau(\chi)L_g\subset L_f.
\end{equation}
\eqref{fcg} together with \eqref{gcf} implies
\begin{equation}\label{fg}
L_f=\frac{1}{m}\tau(\chi)L_g
\end{equation} 
for some $m\in \CO_K$ such that $m\mid m'$. 

By the general theory of Gauss sums and Jacobi sums \cite[Theorem 1 and its Corollary, Page 93]{IRbook}, $\tau(\chi)^3=pJ(\chi,\chi)$, $J(\chi,\chi)\in\CO_K$ and $J(\chi,\chi)\ov{J(\chi,\chi)}=p$. So $\tau(\chi)=\sqrt[3]{p\varpi}$ or $\sqrt[3]{p\bar\varpi}$ up to a sixth root of unity. Recall $c'\in\BZ$ is the Manin-Stevens constant of $E_{p^{2i}}$, then the Neron lattice $L_{E_{p^{2i}}}=c'L_g$ while $L_{E_{\bar\varpi^{i}}}=cL_f$. Together with \eqref{fg}, we get
\begin{equation}\label{LEE}
L_{E_{\bar\varpi^i}}=\frac{c}{c'}\cdot\frac{\tau(\chi)}{m}L_{E_{p^{2i}}}.
\end{equation}
By \cite[Theorem 1.2]{Ce1}, $c'$ can only have prime factors $3$ and $p$ while we have proved the Manin-Stevens constant $c$ of $E_{\bar\varpi^i}$ can only have primes factors above $p$. From the Weierstrass equations of $E_{\bar\varpi^i}$ and $E_{p^{2i}}$, we can see that (please consult \cite[Proposition VI.3.6]{Silvermanbook1} with \cite[example II.1.3.2]{Silvermanbook2})
$$\sqrt[3]{p\varpi}L_{E_{p^{2}}}=L_{E_{\bar\varpi}},\ \ \sqrt[3]{\frac{p}{\bar\varpi^2}}L_{E_{p^{4}}}=L_{E_{\bar\varpi^2}}.$$ 
So in \eqref{LEE} up to a sixth root of unity, $c'$ is coprime to $3$, $m=\bar\varpi^j$ for some integers $j\geq 0$ (we already know that $m$ is coprime to $\varpi$) and
\begin{equation}
\tau(\chi)=\begin{cases}
\sqrt[3]{p\varpi}& \text{if}\ i=1;\\ & \\ \sqrt[3]{p\bar\varpi}& \text{if}\ i=2. 
\end{cases}\end{equation}
i.e.
\begin{equation}
L_f=\begin{cases}
\frac{\sqrt[3]{p\varpi}}{\bar\varpi^j}L_g& \text{if}\ i=1;\\ &\\ \frac{\sqrt[3]{\frac{p}{\bar\varpi^2}}}{\bar\varpi^j}L_g& \text{if}\ i=2. 
\end{cases}\end{equation}
Since the complex conjugation $\ov{L_f}=L_{f^c}$ and $L_g=\ov{L_g}$, we can deduce (from \eqref{AL} and \eqref{ffc}) that
\begin{equation}\label{W}
C=\begin{cases}\frac{\varpi^{j+\frac{1}{3}}}{\bar\varpi^{j+\frac{1}{3}}}\  \text{with}\ j\geq 0 & \text{if}\ i=1; \\ &\\ \frac{\varpi^{j-\frac{1}{3}}}{\bar\varpi^{j-\frac{1}{3}}}\ \text{with}\ j>0 & \text{if}\ i=2,
\end{cases}\end{equation}
up to a sixth root of unity. From \eqref{root}, we see that $N(\fm)C$ is an algebraic integer. Combining this with \eqref{W}, we know that 
\begin{equation}\label{involution}
f\mid_{W}=\beta\frac{\varpi^{\frac{i}{3}}}{\bar\varpi^{\frac{i}{3}}}f^c, 
\end{equation}
with $\beta$ is a sixth root of unity. Again by \eqref{ffc}, the period lattice $$L_f=\frac{\varpi^{\frac{i}{3}}}{\bar\varpi^{\frac{i}{3}}} L_{f^c}.$$ 
Assume the Weierstrass equation of $E_{L_f}$ is $y^2=x^3+\frac{D}{4}$, then the Weierstrass equation of $E_{L_{f^c}}$ is automatically $y^2=x^3+\frac{\bar{D}}{4}$. By comparing the Neron lattice we see that $D=\bar\varpi^i D'$ with $D'$ coprime to $p$. Then from \eqref{modularpara}, we know that $c$ is coprime to $p$ which proves $c$ is a unit in $\CO_K$. This also implies $c'$  the Manin-Stevens constant of $E_{p^{2i}}$ is also $1$.

 \section{Integrality of the modular parametrization}
 By \cite[Chapter VI, Theorem 3.5]{Silvermanbook1}, the Laurent series for $\wp_L(z)$ around $z=0$ is given by
$$\wp_L(z)=\frac{1}{z^2}+\sum_{k=1}^{\infty}(2 k+1) G_{2 k+2}(L)z^{2 k},$$
where 
\begin{equation}\label{es}
G_{2 k+2}(L)=\sum_{\substack{\omega \in L \\ \omega \neq 0}} \omega^{-2k-2}
\end{equation}
is the Eisenstein series of weight $2k+2$ for the lattice $L$. By the expression \eqref{es} and the fact that $L$ is invariant under multiplication by $\omega$, we see that $G_{2k+2}(L)=\omega^{-2k-2}G_{2k+2}(L)$. So $G_{2k+2}(L)=0$ unless $2k\equiv 1\mod 3$. As a result


\begin{equation}\label{wp}
\wp_L(z)=\frac{1}{z^2}+\sum_{k=0}^\infty(6k+5) G_{6k+6}(L)z^{6k+4},
\end{equation}

\begin{eqnarray}\label{wpd}
\frac{1}{2}\wp'_L(z)=-\frac{1}{z^3}+\frac{1}{2}\sum_{k=0}^\infty(6k+4)(6k+5)G_{6k+6}(L)z^{6k+3}.
\end{eqnarray}
\begin{equation}\label{form}
2\pi i\s\int_{i\infty}^z f(\tau)d\tau=\sum_{n=1}^\infty \frac{a_n}{n}q^n=:\hat{f}(q)
\end{equation}
Put \eqref{form} into \eqref{wp}, \eqref{wpd} and \eqref{modularpara}, we get the Fourier expansion $x(q)$ and $y(q)$ of $x(z)$ and $y(z)$ which satisfy the equation of $E_{\bar\varpi^i}$. The same story holds for $E_{\varpi}$. The following Proposition can be easily checked.
\begin{proposition}\label{minimal}
The minimal Weierstrass model of $E_{\bar\varpi^i}$ is given by
\begin{enumerate}
\item $y^2+y=x^3+\frac{{\bar\varpi}^{2i}-1}{4}$, if $\bar\varpi^i\equiv 1\mod 2$;
\item $y^2+\omega y=x^3+\frac{{\bar\varpi}^{2i}-\omega^2}{4}$, if $\bar\varpi^i\equiv \omega\mod 2$;
\item $y^2+\omega^2 y=x^3+\frac{{\bar\varpi}^{2i}-\omega}{4}$, if $\bar\varpi^i\equiv 1+\omega\mod 2$.
\end{enumerate}
with variable transformation $x\mapsto x$, $y\mapsto y+\frac{1}{2}, y+\frac{\omega}{2}, y+\frac{\omega^2}{2}$.
\end{proposition}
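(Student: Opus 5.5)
The plan is to treat the three cases uniformly. For each we start from $E_{\bar\varpi^i}: y^2=x^3+\bar\varpi^{2i}/4$ and apply the substitution $y\mapsto y+\epsilon/2$ with $\epsilon\in\{1,\omega,\omega^2\}$ chosen so that $\epsilon\equiv\bar\varpi^i\mod 2$. Then we check three things: the resulting equation has coefficients in $\CO_K$, it is isomorphic over $K$ to the original model, and it is minimal at every prime of $\CO_K$.

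First, the transformation. Substituting $y+\epsilon/2$ for $y$ gives $y^2+\epsilon y+\epsilon^2/4=x^3+\bar\varpi^{2i}/4$, that is,
\[y^2+\epsilon y=x^3+\frac{\bar\varpi^{2i}-\epsilon^2}{4}.\]
With $\epsilon=1,\omega,\omega^2$ we get $\epsilon^2=1,\omega^2,\omega$, which are exactly the constants in cases (1)--(3). For integrality we need $4\mid \bar\varpi^{2i}-\epsilon^2=(\bar\varpi^i-\epsilon)(\bar\varpi^i+\epsilon)$. Since $2$ is inert in $K$ and $\CO_K/2\cong\BF_4$, the condition $\bar\varpi^i\equiv\epsilon\mod 2$ makes both factors divisible by $2$, so the product is divisible by $4$. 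The three residue classes $1,\omega,1+\omega=-\omega^2\equiv\omega^2$ exhaust $(\CO_K/2)^\times$, and $\bar\varpi^i$ is prime to $2$, so exactly one case applies.

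Second, minimality. The discriminant of $y^2=x^3+D/4$ in $a$-invariants is $-27D^2$. Here $D=\bar\varpi^{2i}$, so $\Delta=-27\bar\varpi^{4i}$, and a translation in $y$ does not change $\Delta$. The model is integral, so it is minimal at any prime $v$ with $v(\Delta)<12$. For $v=\bar\fp$ we have $v(\Delta)=4i\le 8<12$. For $v=(\sqrt{-3})$ we have $v(27)=6<12$. At every other prime, including $2$, $\Delta$ is a unit. Hence the model is globally minimal. Since $\CO_K$ is a PID, a global minimal model exists and this one is it.

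The only point that needs care is the $2$-adic congruence bookkeeping, namely matching $\epsilon^2$ with the stated constants, and there is nothing deeper to check there. So the main "obstacle" is just verifying that $1+\omega\equiv\omega^2\mod 2$ and choosing the right sign of the translation, both of which are immediate.
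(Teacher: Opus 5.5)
Your proposal is correct. The translation $y\mapsto y+\epsilon/2$ with $\epsilon\equiv\bar\varpi^i$ modulo $2$ produces exactly the three stated equations, and they are integral. The discriminant $-27\bar\varpi^{4i}$ has valuation below $12$ at every prime, so the model is minimal everywhere. The paper gives no proof, saying only that the proposition ``can be easily checked,'' and your argument is precisely that routine verification.
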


The proof of the following theorem is obtained with the discussion with K. Cesnavicius, B.Conrad and F.Calegari.  Similar arguments were used to prove that the Manin Constants are integral, see e.g. \cite[(1.6)]{Stevens}, \cite[Propostion 1.1]{Edix}, \cite[Proposition 4.2]{GL01} and \cite[Theorem 3.3]{ARS}. 
\begin{theorem}\label{integral}
Let $\Gamma$ be $\Gamma_1(N)$ or $\Gamma_0(N)$ and $E$ be an elliptic curve over a class number one field $L$ which is parametrized by $X_\Gamma$. Assume
$\pi: X_\Gamma\rightarrow E$ is chosen such that $\pi(\infty)=O$ and is given by $\tau\mapsto (X(q),Y(q))$  where $q=e^{2\pi i\tau}$ and $X(q),Y(q)$ satisfy the minimal Weierstrass equation of $E$ over $L$. Then $\frac{1}{X(q)}\in \CO_L[[q]]$ and $\frac{1}{Y(q)}\in \CO_L[[q]]$.
\end{theorem}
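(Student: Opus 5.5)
We will follow the standard proof of integrality of Manin constants (as in \cite[(1.6)]{Stevens}, \cite[Proposition 1.1]{Edix}): compare the formal group of the minimal model of $E$ with the $q$-expansion of the map at the cusp $\infty$ on the integral model of $X_\Gamma$.

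\textbf{Integral structure at $\infty$.} Let $\mathcal{X}$ be the integral model of $X_\Gamma$ over $\BZ$ (or over $\BZ[1/N]$ first, then globally via Katz--Mazur). For $\Gamma=\Gamma_0(N)$ or $\Gamma_1(N)$, with $\Gamma_1(N)$ in the moduli normalization where $\infty$ is a $\BZ$-point (the $\mu_N$-model), the cusp $\infty$ extends to a section $\BZ\to\mathcal{X}$. We will use the Tate curve: the completed local ring of $\mathcal{X}$ along this section is $\BZ[[q]]$. Base change to $\CO_L$ gives $\CO_L[[q]]$ as the formal completion of $\mathcal{X}_{\CO_L}$ along $\infty$. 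Let $\mathcal{E}$ be the minimal Weierstrass model over $\CO_L$, a Weierstrass model that exists globally since $L$ has class number one. Let $\mathcal{E}^{sm}$ be its smooth locus, which is an open subscheme of the Néron model.

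\textbf{Extending $\pi$ near $\infty$.} The morphism $\pi$ extends to the smooth locus of $\mathcal{X}_{\CO_L}$ by the Néron mapping property, applied to the Jacobian of $X_\Gamma$ composed with the Abel--Jacobi map based at $\infty$. The section $\infty$ lies in this smooth locus. Hence $\pi$ induces a map of formal completions along $\infty\mapsto O$:
\[\CO_L[[q]]\longleftarrow \widehat{\mathcal{E}}_{O}=\CO_L[[T]],\qquad T=-X/Y,\]
where $T$ is the standard formal-group parameter of the minimal Weierstrass model at $O$. Because the map is defined over $\CO_L$, $T\circ\pi=-X(q)/Y(q)\in q\,\CO_L[[q]]$. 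The Weierstrass coefficients $a_i$ lie in $\CO_L$, so the standard formal-group expansions \cite[IV.1]{Silvermanbook1} give
\[X=T^{-2}-a_1T^{-1}-a_2-\cdots,\qquad Y=-T^{-3}+\cdots,\]
and therefore
\[\frac1X=T^2\,u(T),\qquad \frac1Y=-T^3\,v(T),\]
with $u,v\in\CO_L[[T]]$ units of constant term $1$. Substituting $T=T(q)\in q\CO_L[[q]]$ yields $1/X(q),\,1/Y(q)\in\CO_L[[q]]$.

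\textbf{Main obstacle.} The hard step is justifying that $\pi$ extends to an $\CO_L$-morphism on a neighbourhood of the cusp $\infty$ in the integral model, including at primes dividing $N$, with $\CO_L[[q]]$ as the completed local ring there. At primes dividing $N$ the model $\mathcal{X}$ is not smooth everywhere. The argument must therefore use two facts:
\begin{itemize}
\item[(i)] the component containing $\infty$ is smooth along $\infty$, which holds because the Tate curve with its level structure at $\infty$ is defined over $\BZ[[q]]$;
\item[(ii)] the Néron mapping property applies to the smooth locus.
\end{itemize}
For $\Gamma_1(N)$ one must be careful to choose the model in which $\infty$, rather than $0$, is rational over $\BZ$; otherwise $q$-expansions only acquire $\BZ[\zeta_N]$-coefficients. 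We will take this model (cf.\ Conrad's appendix to \cite{Edix}).

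The hypothesis that $\pi$ is given by the $q$-expansion on the nose, with no Manin constant rescaling, is used implicitly: it means $\pi$ is the genuine morphism to the minimal model and not one that is only defined after inverting some primes.
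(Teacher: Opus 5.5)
Your proposal is correct and is essentially the paper's argument. Both extend $\pi$ over the smooth locus of an integral model of $X_\Gamma$ by the N\'eron mapping property, use that the minimal Weierstrass model agrees with the N\'eron model near the zero section, identify the formal neighbourhood of the cusp $\infty$ with $\Spf \CO_L[[q]]$ via the Tate curve, and read off integrality of $1/X(q)$ and $1/Y(q)$ from the formal-group expansions at $O$. Your write-up is in fact a bit more careful than the paper's in two places. First, the paper loosely calls $1/x$ or $1/y$ the local uniformizer, whereas you correctly use $T=-X/Y$ with $1/X=T^2u(T)$ and $1/Y=-T^3v(T)$. Second, you explicitly insist on the $\mu_N$-model of $X_1(N)$ so that $\infty$ is $\BZ$-rational. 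One small correction: the Conrad appendix you have in mind is the one attached to \cite{ARS}, not to \cite{Edix}.
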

\begin{proof}
Let $\CX_\Gamma^0$ be the smooth locus of the minimal proper regular model $\CX_\Gamma$ of $X_\Gamma$ over $\Spec(\CO_L)$ and $\CE$ be the Neron model of $E$ over $\CO_L$. By the Neron mapping property, the parametrization $\pi$ extends to a morphism $\pi: \CX_1(N)^0\rightarrow \CE$ over $\CO_L$. It is known that the minimal Weierstrass model over $\CO_L$ agrees near its identity section with the Neron model, see \cite[Theorem 5.5]{Con}. It is also known that the formal completion of $\CX_1(N)$ along the unramified cusp $\infty$ is $\Spf(\CO_L[[q]])$ with local parameter $q=e^{2\pi iz}$ given by the Tate curve, i.e. \cite[Theorem 8.11.10]{KZ}. The analytic $q$-expansion of modular forms (functions) over $\BC$ at $\infty$ agrees with their algebraic restrictions to $\Spf(\CO_L[[q]])$ of Katz's algebraic interpretation of them \cite[Chapter 1]{Katz}\cite[VII]{DR}. 
But note that our parametrization is chosen that $\phi(\infty)=O$ and the local uniformizer of the minimal Weierstrass model of $E_{\bar\varpi^i}$ is given by the inverse of $x$ or $y$ \cite[Section IV.1]{Silvermanbook1}. This means that the $q$-expansions of $\frac{1}{X(q)}$ and $\frac{1}{Y(q)}$ at the cusp $\infty$ have coefficients in $\CO_L$.
\end{proof}
\begin{remark}
In the proof above, we use two models of the modular curves developed by Katz-Mazur and Deligne-Rapoport. They treated the cusp differently. Katz-Mazur treated the Tate curve as being defined by the explicit equation over $\BZ[[q]]$ without showing their modular curves have a moduli meaning near their notion of cusps. Deligne-Rapoport used the generalized elliptic curves to treat their cusp but does not show their abstract approach to the Tate curves over $\BZ[[q]]$. But Conrad showed that these two approaches agree in
\cite{Conrad07} and his forth coming paper of his talk at the conference in March 2025 at Harvard in honor of Tate's 100th birthday.
\end{remark}

Now combine \eqref{modularpara}, Proposition \ref{minimal} and Theorem \ref{integral}, we see that $\frac{c^2}{x(q)}\in\CO_K[[q]]$, $\frac{1}{\frac{y(q)}{c^3}+\frac{\omega^j}{2}}\in\CO_K[[q]]$ with $\bar\varpi^i\equiv \omega^j\mod 2$. As we prove that $c\in\CO_K^\times$ in section \ref{c} and the leading term of $y(q)$ is $\frac{-1}{q^3}$, we deduce that $\frac{y(q)}{c^3}-\frac{\bar\varpi^i}{2}\in q^{-3}\CO_K[[q]]$ with first coefficient in $\CO_K^\times$. It is easy to see the discussion in this section also works for $y^c(q)$.  Then we have proved

\begin{proposition}\label{yq}
Both $y(q)-\frac{\bar\varpi^i}{2}$ and $y^c(q)-\frac{\varpi^i}{2}$ belong to $q^{-3}\CO_K[[q]]$ with first coefficients in $\CO_K^\times$.
\end{proposition}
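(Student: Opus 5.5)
The plan is to combine Theorem \ref{integral}, applied to the minimal Weierstrass model of Proposition \ref{minimal}, with Theorem \ref{M-S}, and then shift by a constant. By symmetry under complex conjugation of Fourier coefficients, it suffices to treat $y(q)$.

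First, fix $j$ with $\bar\varpi^i\equiv\omega^j \mod 2$. Such a $j$ exists because $\bar\varpi^i$ is coprime to $2$ and $(\CO_K/2)^\times=\set{1,\omega,\omega^2}$. By \eqref{modularpara}, the parametrization sends $\tau$ to the point $\lrb{\wp_L(z_\tau)/c^2,\ \wp'_L(z_\tau)/(2c^3)}$ on $y^2=x^3+\bar\varpi^{2i}/4$. In the coordinates of Proposition \ref{minimal}, this point is $(X(q),Y(q))$ with $X=x(q)$ and $Y=y(q)-\omega^j/2$. This is a parametrization by $X_1(N)$ with $\infty\mapsto O$ into a minimal model over the class number one field $K$. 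Theorem \ref{integral} then gives $1/Y(q)\in\CO_K[[q]]$.

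Next, read off the leading term. By \eqref{wpd} and \eqref{form}, with $\hat f(q)=q+O(q^2)$, we get $y(q)=-c^{-3}q^{-3}+O(q^{-2})$. Hence $1/Y(q)=-c^3q^3(1+O(q))$ lies in $\CO_K[[q]]$. By Theorem \ref{M-S}, $c$ is a root of unity, so $-c^3\in\CO_K^\times$. Therefore $1/Y=q^3u(q)$ with $u\in\CO_K[[q]]$ and $u(0)\in\CO_K^\times$. A power series over $\CO_K$ whose constant term is a unit is invertible in $\CO_K[[q]]$. So $Y(q)=q^{-3}u(q)^{-1}\in q^{-3}\CO_K[[q]]$, and its first coefficient is $-c^{-3}\in\CO_K^\times$.

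Then shift by a constant:
\[y(q)-\frac{\bar\varpi^i}{2}=Y(q)+\frac{\omega^j-\bar\varpi^i}{2}.\]
The constant $(\omega^j-\bar\varpi^i)/2$ lies in $\CO_K$ by the choice of $j$. Adding an integral constant keeps the series in $q^{-3}\CO_K[[q]]$ and does not change the leading coefficient, which is the $q^{-3}$ term.

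There is one possible subtlety, depending on whether one normalizes by the Néron differential so that the functions are $y/c^3$ rather than $y$. In that case the constant becomes $(c^3\omega^j-\bar\varpi^i)/2$. Since $c^3=\pm1$ and $-1\equiv 1\mod 2$, this constant is still in $\CO_K$, so the argument is unchanged. For $y^c$, apply complex conjugation to all coefficients, or repeat the argument with $\bar c$ and $\varpi$.

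The main obstacle is not in these manipulations, which are formal. It lies in the inputs: Theorem \ref{integral}, which gives integrality at the cusp through the Néron model and the Tate curve, and Theorem \ref{M-S}, which shows $c$ is a unit. Both are taken as given here.
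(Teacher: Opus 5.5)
Your proposal is correct and follows the paper's own argument. You apply Theorem \ref{integral} to the minimal model of Proposition \ref{minimal} to get $1/(y(q)-\omega^j/2)\in\CO_K[[q]]$, use Theorem \ref{M-S} to see that the leading coefficient is a unit so the series can be inverted in $\CO_K[[q]]$, and then shift by an integral constant; your handling of the $c^{\pm 3}$ normalization via $c^3=\pm1$ makes precise a point the paper's write-up leaves slightly inconsistent.
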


\section{Nontriviality of the CM points}

Let $\tau=\frac{-1}{3(\omega+r)}$ with $r^2-r+1\equiv 0$ be the CM point we consider. In this section, we prove that either $\varphi(\tau)$ or $\varphi^c(\tau)$ is non-torsion on the elliptic curve $E_{\bar{\varpi}^i}$ or $E_{{\varpi}^i}$.

Recall the functions in \eqref{modular-function},
\begin{equation}\label{id1}
\lrb{y(z)+\frac{\bar\varpi^i}{2}}\lrb{y^c(z)+\frac{\varpi^i}{2}}^{-1}=(F_+(z))^3,
\end{equation}
\begin{equation}\label{id2}
\lrb{y(z)-\frac{\bar\varpi^i}{2}}\lrb{y^c(z)-\frac{\varpi^i}{2}}^{-1}=(F_-(z))^3,
\end{equation}
\begin{equation}\label{id3}
\lrb{y(z)-\frac{\bar\varpi^i}{2}}\lrb{y^c(z)+\frac{\varpi^i}{2}}=(G(z))^3,
\end{equation}
\begin{equation}\label{id4}
\lrb{y(z)+\frac{\bar\varpi^i}{2}}\lrb{y^c(z)-\frac{\varpi^i}{2}}=(H(z))^3
\end{equation} 

\begin{proposition}\label{cubic} 
The modular functions $F_+(z),F_-(z),G(z),H(z)$ are invariant under the action of $\Gamma$. 
\end{proposition}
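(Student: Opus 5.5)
Since the cubes of $F_+,F_-,G,H$ are $\Gamma$-invariant (they are built from $y,y^c$, which are $\Gamma_0(N/3)$-invariant), each $F\in\{F_+,F_-,G,H\}$ satisfies $F(\gamma z)=\mu_F(\gamma)F(z)$ for $\gamma\in\Gamma$, with $\mu_F(\gamma)$ a cube root of unity. The ratio $F(\gamma z)/F(z)$ is continuous in $z$ away from the poles and zeros and has cube $1$, so it is constant. As in the proof of Proposition \ref{LK}, $\mu_F:\Gamma\to\mu_3$ is then a character. The task is to show $\mu_F$ is trivial, i.e.\ that $F$ is a modular function on $\Gamma$ itself. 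The paper advertises the Unbounded Denominators theorem of Calegari--Dimitrov--Tang, and that is the route I will take.

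First, $\ker\mu_F$ is a finite-index subgroup of $\Gamma$ containing the translation $T=\matrixx{1}{1}{0}{1}$ raised to some bounded power, and $F$ is a modular function for $\ker\mu_F$. Second, I need integrality of Fourier coefficients. By Proposition \ref{yq}, $y(q)-\frac{\bar\varpi^i}{2}=q^{-3}u(q)$ with $u\in\CO_K[[q]]$ and $u(0)\in\CO_K^\times$. Hence its cube root $\ff(z)$ has the $q$-expansion $q^{-1}u(q)^{1/3}$, up to a cube root of unity fixed by the normalization of $K$ (leading term $z^{-1}$). Similarly $\fg(q)$ is obtained from $y+\frac{\bar\varpi^i}{2}=y-\frac{\bar\varpi^i}{2}+\bar\varpi^i$, which lies in $q^{-3}\CO_K[[q]]$ with the same unit leading coefficient, and the same holds for $\ff^c,\fg^c$. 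The formal cube root $u(q)^{1/3}$ has coefficients in $\CO_K[1/3]$, with denominators only at $3$. So the expansions of $F_\pm,G,H$ at $\infty$ lie in $\CO_K[1/3]((q))$ (an integral power series in $q$, since $\ff$ and $\ff^c$ are both $q^{-1}\times$ a series in $q$). They therefore have bounded denominators at every prime except possibly $3$. This $3$-adic issue is the main obstacle, and I expect to handle it by a sharper computation, as follows.

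Since $a_n\neq 0$ only when $n\equiv1\bmod3$, $\hat f(q)=qh(q^3)$, and by \eqref{wpd} $y$ is $q^{-3}$ times a series in $q^3$. So $u(q)=v(q^3)$. Then $\ff=q^{-1}v(q^3)^{1/3}$, and I would verify $3$-integrality directly from \eqref{WW}. There $K(z)$ is a rational expression in $\wp_{L_3},\wp'_{L_3}$ evaluated at $\hat f(q)$, and the Weierstrass coefficients of $cL_3$ are integral away from small primes. The $3$-adic denominators must be absorbed using the $1/6$ in \eqref{WW} together with $\bar\varpi^i\equiv 1$ or similar mod $3$. If that fails, bounded denominators for $F$ on $\ker\mu_F$ at the prime $3$ can also be obtained by the standard trick of applying CDT to $F$ times a suitable power of a modular unit.

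With bounded denominators at all primes established, CDT (in its meromorphic modular function form) implies that $F$ is a modular function on some congruence subgroup $\Gamma(M)$. Finally, to get $\mu_F$ trivial on $\Gamma$, I would use Proposition \ref{LK} and Proposition \ref{mk}. For $\gamma\in\Gamma$ one has $f|_\gamma=f$, so $F(\gamma z)/F(z)$ equals a product of values $\nu(w_\gamma)\bar\nu^{c}(\bar w_\gamma)^{\pm1}$ with $w_\gamma=2\pi i\int_{i\infty}^{\gamma\infty}f$. Since $L^c=\ov L$ and $\nu$ is $\omega$-invariant, these factors cancel pairwise for $F_\pm,G,H$ in the right combination. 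Thus $\mu_F$ factors through a character of $L/\sqrt{-3}L$ composed with $\gamma\mapsto w_\gamma$. On the other hand, $\mu_F$ is a congruence character, trivial on $\Gamma\cap\Gamma(M)$, and it is trivial on $T$ since the $q$-expansion is in integral powers of $q$. I would conclude by checking that it is also trivial on generators of $\Gamma$ coming from $\Gamma_1(N)$ together with one element $\gamma$ with $w_\gamma=0$ given by Corollary \ref{zero}.
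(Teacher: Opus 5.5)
There are two genuine gaps, at exactly the two steps that carry the real content. Your plan for the first of them cannot succeed.

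\emph{The $3$-adic integrality.} You propose to show that $\ff=q^{-1}v(q^3)^{1/3}$ itself is $3$-integral. That is impossible. Every parabolic $\gamma\in\Gamma_1(N)$ has period $w_\gamma=0$, so it acts trivially on $\ff$. Suppose $\ff$ were invariant under some $\Gamma(M)$. Then the Fricke--Wohlfahrt theorem would make it $\Gamma(N)$-invariant, hence (adding $T$) $\Gamma_1(N)$-invariant. That means $\nu\equiv 1$ on $L$, contradicting Proposition \ref{LK}. So $\ff$ is a non-congruence modular function. By CDT (in the meromorphic form the paper invokes) it then has unbounded denominators, and by your own computation these must be $3$-adic.

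Your fallback does not help. Multiplying by a congruence modular unit does not change whether a function is congruence, so that route is circular.

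The missing idea is that the $3$-adic denominators disappear only in combinations pairing $f$ with $f^c$. The paper works with $\CY=F_+^3=(y+\frac{\bar\varpi^i}{2})/(y^c+\frac{\varpi^i}{2})$. It uses $a_n\equiv\bar a_n\pmod{\sqrt{-3}}$ for $n\equiv 1\pmod 3$, so $\hat f\equiv\hat{f^c}$ and $\hat f^3\equiv(\hat{f^c})^3\pmod{3\sqrt{-3}}$. Together with $\varpi^i\equiv\bar\varpi^i$, this makes $\CY$ $3$-adically close to $1$, and the cube root is then extracted integrally as in Heninger--Rains--Sloane. CDT is applied to $F_+$, never to $\ff$.

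\emph{From congruence to $\Gamma$.} Your assertion that the factors cancel because $L^c=\ov L$ is not correct. Take $\gamma\in\Gamma_1(N)$ with $\gamma\infty=a/c$. Its $f^c$-period is $2\pi i\int_{i\infty}^{a/c}f^c=\ov{w_{\gamma^*}}$ with $\gamma^*\infty=-a/c$; it is not $\ov{w_\gamma}$. Using $\nu^c(\ov w)=\ov{\nu(w)}$, one gets $F_+(\gamma z)/F_+(z)=\nu(w_\gamma+w_{\gamma^*})$. Its triviality is exactly the statement $2\pi i\bigl(\int_{i\infty}^{a/c}f+\int_{i\infty}^{-a/c}f\bigr)\in\sqrt{-3}L$ of Proposition \ref{plus}. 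The paper can prove that only under $(\star)$; if it were formal, CDT would not be needed at all.

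Two further problems sit in the same step. Triviality of $\mu_F$ on $\Gamma\cap\Gamma(M)$ and on $T$ does not give triviality on $\Gamma_1(N)$, and that is the whole issue. Also, the element of Corollary \ref{zero} lies in $\Gamma_0(N)\setminus\Gamma$, so it cannot serve as a generator of $\Gamma$.

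The paper's mechanism is as follows. All parabolic elements of $\Gamma_1(N)$ act trivially on $F_+$, since their periods vanish. So once $F_+$ is invariant under some $\Gamma(M)$, Fricke--Wohlfahrt ($\Gamma(N)$ is generated by $\Gamma(\mathrm{lcm}(M,N))$ and its parabolic elements) yields $\Gamma(N)$-invariance. Adding $T$ then yields $\Gamma_1(N)$-invariance. Finally one passes to $\Gamma$ through the induced cubic character of $\Gamma_0(N)/\Gamma_1(N)\cong(\BZ/N\BZ)^\times$.
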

 \begin{proof}
 We only need prove the proposition for $F_+(z)$, since 
 $$F_-(z)=F_+(z)\frac{x^c(z)}{x(z)},\ \ H(z)=F_+(z)x^c(z),\ \ G(z)=F_-(z)x^c(z)$$ 
 and both $x(z)$ and $x^c(z)$ are $\Gamma$ invariant.
 
First, we reduce the proof to prove that $F_+(z)$ are $\Gamma_1(N)$ invariant. If $F_+(z)$ are $\Gamma_1(N)$ invariant, then the action of $\Gamma_0(N)$ on $F_+(z)$ factor through the abelian group $\Gamma_0(N)/\Gamma_1(N)=(\BZ/N\BZ)^\times$. Since $\Gamma_0(N)$ acts on $F_+(z)$ by a cubic character, the action of $\Gamma_0(N)/\Gamma_1(N)$ factor through the cube elements in $\Gamma_0(N)/\Gamma_1(N)$. From this we can see $F_+(z)$ are $\Gamma$ invariant.

Second, since $\Gamma(N)$ is a normal subgroup of $\Gamma_1(N)$ and $\Gamma_1(N)/\Gamma(N)=\set{I,T,T^2,\cdots,T^{N-1}}$.
It is easy to see that $T$ acts on these functions trivially, it is enough to prove the proposition for $\Gamma(N)$. By a theorem of Fricke and Wohlfahrt \cite{Woh}(see also \cite[Page 526, Lemma]{Glenn}), $\Gamma(N)$ is generated by $\Gamma(N')$ and the parabolic elements of $\Gamma(N)$ for any $N'\mid N$. The parabolic elements also act trivially on $F_+(z)$. We just need to prove that $F_+(z)$ is modular on some congruence subgroup. Since 
\[\CY(z)=\frac{y(z)+\frac{\bar\varpi^i}{2}}{y^c(z)+\frac{\varpi^i}{2}}\]
is $\Gamma_0(N/3)$ invariant, it is easy to know that $F_+(z)$ is modular by some index-3 subgroup $\Gamma''$ of $\Gamma_0(N/3)$. We will prove $\Gamma''$ is congruence using the Unbounded Denominator Conjecture proved in \cite{CDT}.

By Proposition \ref{yq}, the Fourier expansion $\CY(q)$ of $\CY(z)$ belongs to $\CO_K[[q]]$ with the first coefficient invertible in $\CO_K$. Since the Fourier coefficients of $f(\tau)$ are complex conjugates of the Fourier coefficients of $f^c(\tau)$, their Fourier expansions satisfy $\hat{f}(q)\equiv \hat{f^c}(q) \mod \sqrt{-3}$ in \eqref{form}(remind that $a_n\neq 0$ only if $n\equiv 1\mod 3$). So $\hat{f}(q)^3\equiv \hat{f^c}(q)^3\mod 3\sqrt{-3}$. Bringing this into \eqref{wpd}, we see $y(q)\equiv y^c(q)\mod 3\sqrt{-3}$. Since $\frac{\bar\varpi^i}{2}\equiv \frac{\varpi^i}{2}\mod 3$, we finally get
$$y(q)+\frac{\bar\varpi^i}{2}\equiv y^c(q)+\frac{\varpi^i}{2}\mod 3$$
 and so $\CY(q)\equiv 1\mod 3$. Then the same method as \cite[Theorem 1]{HRS} shows $F_+(q)=\CY(q)^{1/3}$ also belongs to $\CO_K[[q]]$. This will imply $F_+(z)$ is modular on congruence subgroup by the main theorem of \cite{CDT}. This finishes the proof of the Proposition.
\end{proof}
\begin{remark}
Although the main theorem of \cite{CDT} is stated for holomorphic modular forms and modular functions, it is communicated by the authors that their results also hold for meromorphic modular forms and modular functions. So we can apply their result here.
\end{remark}

\begin{proposition}\label{torsion}
Let $r$ be an integer such that $r^2-r+1\equiv 0\mod 3p$. If $-r\equiv \omega\mod\varpi$ and  $\varphi^c(\tau_r)$ is a torsion point on $E_{\varpi^i}$, then $\varphi(\tau_r)\neq O$. If $-r\equiv \omega^2\mod\varpi$ and $\varphi(\tau_r)$ is a torsion point on $E_{\bar\varpi^i}$, then $\varphi^c(\tau_r)\neq O$.
\end{proposition}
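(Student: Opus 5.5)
The plan is to argue by contradiction. I treat only the first assertion; the second follows by interchanging $\varpi$ and $\bar\varpi$, i.e.\ applying complex conjugation to all Fourier coefficients.

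So assume $-r\equiv\omega\mod\varpi$, that $\varphi^c(\tau_r)$ is torsion, and that $\varphi(\tau_r)=O$. By Theorem \ref{cmpoint} we already know $\varphi(\tau_r)\in E_{\bar\varpi^i}(K)$ is $\sqrt{-3}$-torsion, so the only thing to exclude is that it is the origin. The assumption $\varphi(\tau_r)=O$ means exactly that
\[z_r:=2\pi i\int_{i\infty}^{\tau_r}f(\tau)d\tau\in L .\]

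The first step converts this into a statement about a cusp period. Set $\gamma=A\iota_1(\omega)$ if $N=9p$ and $\gamma=B^2A\iota_1(\omega)$ if $N=27p$. By \eqref{omega1} and \eqref{omega2} this lies in $\Gamma_0(N)$. Since $\iota_1(\omega)$ fixes $\tau_r$, we have $\gamma\tau_r=A\tau_r$ (resp.\ $B^2A\tau_r$). By Proposition \ref{AB}, $f|_{A}=\omega f$ and $f|_B=\omega^2 f$. Since $f|_{\iota_1(\omega)}$ is a cube root of unity times $f$, write $f|_\gamma=\lambda f$. Splitting $\int_{i\infty}^{\gamma\tau_r}=\int_{i\infty}^{\gamma\infty}+\int_{\gamma\infty}^{\gamma\tau_r}$ and using $\gamma\tau_r=A\tau_r$ (resp.\ $B^2A\tau_r$) gives
\[2\pi i\int_{i\infty}^{\gamma\infty}f\,d\tau=(\mu-\lambda)\,z_r+\delta,\]
where $\mu$ is the root of unity coming from $A$ (resp.\ $B^2A$) and $\delta$ is a period lying in $L$ (by Lemma \ref{L=L0}, since $B\in\Gamma_0(N/3)$). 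I expect $\mu-\lambda$ to be an associate of $\sqrt{-3}$. Then $z_r\in L$ forces
\[2\pi i\int_{i\infty}^{\gamma\infty}f\,d\tau\in \sqrt{-3}L \pmod{\delta},\]
a divisibility statement about a cusp period attached to $\gamma\in\Gamma_0(N)$.

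The second step, which I expect to be the main obstacle, is to show that this divisibility is impossible. Here I would imitate the proof of Proposition \ref{torsion0}, using the character $\nu$ of Proposition \ref{LK}. From $\ff(\gamma z)=\nu\bigl(2\pi i\int_{i\infty}^{\gamma\infty}f\bigr)\lambda^{-1}\ff(z)$ and the fact that $\gamma$ fixes $\tau_r$ up to $A$, I would evaluate a suitable modular function at the CM point. This could be $\ff$ itself, or the $\Gamma$-invariant $F_+,F_-,G,H$ of Proposition \ref{cubic}, since these are the tools whose values at $\tau_r$ lie in $K(\sqrt[3]{\varpi})$ by Shimura reciprocity. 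The aim is to get an identity forcing $\nu$ of that period to be a nontrivial cube root of unity, as in \eqref{action0}; nontriviality of $\nu$ would then contradict the period lying in $\sqrt{-3}L$.

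This is also where the hypothesis on $\varphi^c(\tau_r)$ enters. By the conjugate version of Theorem \ref{cmpoint}, $\varphi^c(\tau_r)$ lies in $E_{\varpi^i}(K(\sqrt[3]{\varpi}))$ with $\sigma_{\omega_\fp}$ acting by $[\omega^{\pm1}]$, so a torsion such point is $\sqrt{-3}$-torsion after twisting (Proposition \ref{cusp}). Its $y$-coordinate is then among $\pm\varpi^i/2$ or the point is $O$. This makes $F_\pm(\tau_r),G(\tau_r),H(\tau_r)$ zero, a pole, or a controlled value. Combining this with the pole of $\ff$ at $\tau_r$ (from $\varphi(\tau_r)=O$) and the leading-term computation as in Proposition \ref{torsion0}, I expect to pin down the root of unity and reach the contradiction. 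If the direct $\nu$-computation does not close, the fallback is to compare the order of vanishing and the $\sigma_{\omega_\fp}$-action on one of the four functions at $\tau_r$, which should be incompatible with a pole coming only from $\ff$.
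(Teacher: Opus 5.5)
\textbf{There is a genuine gap.} Your Step 1 is fine as a reformulation. Step 2, however, carries the entire content, you state it only as an expectation, and the mechanism you propose for it cannot work.

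Put $u(z)=2\pi i\int_{i\infty}^{z}f(\tau)d\tau$, define $u^c$ likewise, and let $z_r=u(\tau_r)$ and $z_1=2\pi i\int_{i\infty}^{\iota_1(\omega)\infty}f(\tau)d\tau$. Using $f|_{\iota_1(\omega)}=\omega f$, evaluate $u(\iota_1(\omega)z)=z_1+\omega u(z)$ at $z=\tau_r$. This gives $z_1=(1-\omega)z_r$, so $\varphi(\tau_r)=O$ if and only if $\nu(z_1)=1$. That is the clean form of Step 1. With your $\gamma$ in the case $N=27p$, the term $\delta$ lies in $L$ but need not lie in $\sqrt{-3}L$, so ``$\in\sqrt{-3}L \pmod{\delta}$'' imposes no condition.

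Now look at what the transformation laws say locally at $\tau_r$.
\begin{itemize}
\item $\iota_1(\omega)$ is elliptic of order $3$ with fixed point $\tau_r$. In a linearizing coordinate $w$ it acts as $w\mapsto\zeta w$, with $\zeta$ a primitive cube root of unity.
\item Since $f^c|_{\iota_1(\omega)}=f^c$ in this case, $u^c\circ\iota_1(\omega)=u^c$. Hence $\ff^c$ and $\fg^c$ are $\iota_1(\omega)$-invariant, and their orders at $\tau_r$ are divisible by $3$ (in particular $f^c(\tau_r)=0$), whatever $\varphi^c(\tau_r)$ is. So the torsion hypothesis on $\varphi^c(\tau_r)$ gives you no leverage.
\item On the other hand $u\circ\iota_1(\omega)-z_r=\omega(u-z_r)$. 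So $e=\operatorname{ord}_{\tau_r}(u-z_r)$ satisfies $\zeta^{e}=\omega$, and $3\nmid e$.
\item If $z_r\in L$, then $\ff$ and $\fg$ have poles of order $e$ at $\tau_r$. Each of $F_\pm,G,H$ then has order $\equiv -e\not\equiv 0\pmod{3}$ there, so all four take the value $0$ or $\infty$ at $\tau_r$. Your Galois-theoretic fallback is therefore empty.
\item Comparing leading terms in \eqref{F} gives $\zeta^{-e}=\nu(z_1)\omega^{-1}$, i.e.\ $\nu(z_1)=1$. This is exactly what $z_r\in L$ predicts.
\end{itemize}
The transformation laws are identities. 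Their leading-term consequences at the fixed point are compatible with every possible value of the $\sqrt{-3}$-torsion point $\varphi(\tau_r)$, including $O$. The template of Proposition \ref{torsion0} behaves the same way: under $\varphi(0)=O$, $\ff$ has a simple pole in the local parameter at the cusp $0$, while $j$ has a pole of order $N$, so $\ff j^{-1}$ is not a unit there.

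\textbf{The paper's proof does not fill this gap either.} It follows exactly your route. It closes Step 2 by asserting that if $\varphi(\tau_r)=O$, then one of $F_+(\tau_r),H(\tau_r),G(\tau_r)$ is finite and nonzero, according as $y^c(\tau_r)=\infty,\ \varpi^i/2,\ -\varpi^i/2$. Then \eqref{F} at $z=\tau_r$ yields $\nu(z_1)=\omega$, hence $z_r\notin L$. That assertion tacitly assumes the $\varphi$-factor and the $\varphi^c$-factor of the relevant function have zeros or poles of the same order at $\tau_r$, i.e.\ that $\varphi$ and $\varphi^c$ are equally ramified there. The computation above shows their ramification indices at $\tau_r$ are $e$ with $3\nmid e$ and a multiple of $3$, respectively. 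So you cannot close your gap by borrowing that step.

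What is missing is an independent, global determination of $\nu(z_1)$, i.e.\ of the class of the cusp period $z_1$ modulo $\sqrt{-3}L$. One possible source is modular-symbol relations connecting the cusp $\iota_1(\omega)\infty$ to cusps whose periods are controlled by other means. Nothing local at $\tau_r$ can provide it.
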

\begin{proof}
We only prove the first case, second case can be proved similarly.

If $-r\equiv \omega\mod\varpi$, we know that $\varphi(\tau_r)\in E_{\bar\varpi^i}(K)$ should be a torsion point by Theorem \ref{cmpoint} . 

By the choice of $r$, the matrix $\iota_1(\omega)$ fixes $\tau_r$ and acts on $f(z)$ (resp. $f^c(z)$) as $f(z)\mid_{\iota_1(\omega)}=\omega f(z)$ (resp. $f^c(z)\mid_{\iota_1(\omega)}=f^c(z)$) see Proposition \ref{AB} and \eqref{omega1}, \eqref{omega2}.  So 
\[\int_{i\infty}^{\iota_1(\omega)\infty}f^c(\tau)d\tau=\int_{i\infty}^{\tau_r}f^c(\tau)d\tau+\int_{\tau_r}^{\iota_1(\omega)\infty}f^c(\tau)d\tau=0,\]
and
\[2\pi i\int_{i\infty}^{\iota_1(\omega)\infty}f(\tau)d\tau=(1-\omega)2\pi i\int_{i\infty}^{\tau_r}f(\tau)d\tau\in L.\]
Then by Proposition \ref{mk} and \ref{LK},

\begin{eqnarray*}
K\lrb{2\pi i\int_{i\infty}^{\iota_1(\omega)z}f(\tau)d\tau}&=&K\lrb{2\pi i\int_{i\infty}^{\iota_1(\omega)\infty}f(\tau)d\tau +2\pi i\int_{\iota_1(\omega)\infty}^{\iota_1(\omega)z}f(\tau)d\tau}\\
&=&\nu\lrb{2\pi i\int_{i\infty}^{\iota_1(\omega)\infty}f(\tau)d\tau }\omega^{-1} K\lrb{2\pi i\int_{i\infty}^{z}f(\tau)d\tau}
\end{eqnarray*}
and 
\begin{eqnarray*}
K^c\lrb{2\pi i\int_{i\infty}^{\iota_1(\omega)z}f^c(\tau)d\tau}&=&K^c\lrb{2\pi i\int_{i\infty}^{\iota_1(\omega)\infty}f^c(\tau)d\tau +2\pi i\int_{\iota_1(\omega)\infty}^{\iota_1(\omega)z}f^c(\tau)d\tau}\\
&=&K^c\lrb{2\pi i\int_{i\infty}^{z}f^c(\tau)d\tau}
\end{eqnarray*}

So we have 
\begin{equation}\label{F}
F_+(\iota_1(\omega)z)=\nu\lrb{2\pi i\int_{i\infty}^{\iota_1(\omega)\infty}f(\tau)d\tau }\omega^{-1}F_+(z).
\end{equation}
The same formula also holds for $F_-(z), H(z)$ and $G(z)$. Take $z=\tau_r$, if $\varphi(\tau_r)=O$, then $y(\tau_r)=\infty$. Under this assumption, if $y^c(\tau_r)=\infty$, then $F_+(\tau_r)\neq 0,\infty$; if $y^c(\tau_r)=\frac{\varpi^i}{2}$, then $H(\tau_r)\neq 0,\infty$; if $y^c(\tau_r)=-\frac{\varpi^i}{2}$; then $G(\tau_r)\neq 0,\infty$. In summary, if $\varphi(\tau_r)=O$, then at least one of $F_+(\tau_r), H(\tau_r)$ and $G(\tau_r)$ is not zero or infinity. Since $\iota_1(\omega)\tau_r=\tau_r$, from \eqref{F} and similar equations for $H(z), G(z)$, we get
\begin{equation}\label{actionr}
\nu\lrb{2\pi i\int_{i\infty}^{\iota_1(\omega)\infty}f(\tau)d\tau }=\omega.
\end{equation}
As a result we know
\[2\pi i\int_{i\infty}^{\iota_1(\omega)\infty}f(\tau)d\tau \notin \sqrt{-3}L.\]
Since 
$$(1-\omega)\int_{i\infty}^{\tau_r}f(\tau)d\tau=\int_{i\infty}^{\iota_1(\omega)\infty}f(\tau)d\tau,$$ 
we get
$$2\pi i\int_{i\infty}^{\tau_r}f(\tau)d\tau\in \frac{1}{\sqrt{-3}}L$$
but not in $L$. This contradict the assumption that $\varphi(\tau_r)=0$. This means $\varphi(\tau_r)$ is a primitive $\sqrt{-3}$ torsion point on $E_{\bar\varpi^i}$.
\end{proof}

Similar proofs using Theorem \ref{cmpoint1}, \eqref{omega12} and \eqref{omega32} give the following proposition.
\begin{proposition}\label{torsion1}
Let $r$ be an integer such that $r^2-r+1\equiv 0\mod 3p$. If $-r\equiv \omega\mod\varpi$ and  $\varphi(W(\tau_r))$ is a torsion point on $E_{\bar\varpi^i}$, then $\varphi^c(W(\tau_r))\neq O$. If $-r\equiv \omega^2\mod\varpi$ and $\varphi^c(W(\tau_r))$ is a torsion point on $E_{\varpi^i}$, then $\varphi(W(\tau_r))\neq O$.
\end{proposition}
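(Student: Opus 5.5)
I will mirror the proof of Proposition \ref{torsion}, replacing the CM point $\tau_r$ by $z_0=W(\tau_r)$ and the matrix $\iota_1(\omega)$ by $\iota_2(\omega)$ from \eqref{embedding1}, which fixes $W(\tau_r)$. I treat the first case, $-r\equiv\omega\mod\varpi$; the second is symmetric, with the roles of $f,f^c$ and of $K,K^c$ swapped.

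\textbf{Step 1: the action of $\iota_2(\omega)$ on $f$ and $f^c$.} The first task is to show that $f\mid_{\iota_2(\omega)}=f$ and $f^c\mid_{\iota_2(\omega)}=\omega^{\pm1}f^c$. I would get this from the decompositions \eqref{omega12} and \eqref{omega32}: $\iota_2(\omega)B^2$, resp. $\iota_2(\omega)AB^2$, lies in $\Gamma_0(N)$. Combine this with the values of $f\mid_A$ and $f\mid_B$ from Proposition \ref{AB}, and evaluate the nebentypus $\xi$ of this element of $\Gamma_0(N)$ on its lower-right entry modulo $p$. That entry is $-r$ or $N/9-r$, and these are $\equiv -r\mod p$. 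The case $f\mid_D=f$ is exactly what was used in the proof of Lemma \ref{L=L0} for the appropriate choice of $r$. The congruence $-r\equiv\omega\mod\varpi$ then fixes which of $f$, $f^c$ is invariant, since $\xi^c=\bar\xi$.

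I expect this sign and character bookkeeping to be the main obstacle. If the assignment comes out reversed, the argument still works with the roles of $\ff,\fg$ and $\ff^c,\fg^c$ interchanged. Theorem \ref{cmpoint1} also helps to decide the assignment: in this case it forces $\varphi(W(\tau_r))$ to be $K$-rational, hence $\sqrt{-3}$-torsion, which is consistent with $f$ being the function multiplied by a nontrivial cube root of unity at the fixed point.

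\textbf{Step 2: a cubic-character identity.} Let $\omega^{k}$ be the nontrivial root of unity from Step 1, attached to whichever form is not invariant. For the invariant form, $\int_{i\infty}^{\iota_2(\omega)\infty}=0$, because the integral splits through the fixed point $z_0$. For the other form, the integral equals $(1-\omega^{k})\int_{i\infty}^{z_0}$, and this lies in the period lattice. Propositions \ref{mk} and \ref{LK} then give
\[F_+(\iota_2(\omega)z)=\nu\lrb{2\pi i\int_{i\infty}^{\iota_2(\omega)\infty}f(\tau)d\tau}^{\pm1}\omega^{\mp k}F_+(z).\]
The same identity, with the same constant up to inversion, holds for $F_-$, $G$ and $H$.

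\textbf{Step 3: the contradiction.} Suppose the point on the non-invariant side, $\varphi^c(W(\tau_r))$ in this case, equals $O$. The other point is torsion by hypothesis, and since it is $K$-rational by Theorem \ref{cmpoint1}, Proposition \ref{cusp} shows that its $y$-coordinate is $\infty$ or $\pm\bvarpi/2$ (for $\varphi$). Exactly as in the proof of Proposition \ref{torsion}, one of $F_\pm$, $G$, $H$ is therefore finite and nonzero at $z_0$. Evaluating the identity of Step 2 at the fixed point $z_0$ forces $\nu(\cdot)=\omega^{k}$, which is nontrivial. So the period $(1-\omega^k)\,2\pi i\int_{i\infty}^{z_0}f^c$ is not in $\sqrt{-3}L^c$. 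Hence $2\pi i\int_{i\infty}^{z_0}f^c\notin L^c$, which contradicts $\varphi^c(z_0)=O$.
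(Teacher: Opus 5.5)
Your plan is the argument the paper intends: it proves Proposition~\ref{torsion1} by saying the proof of Proposition~\ref{torsion} carries over. However, the decisive claim in your Step~3, that one of $F_\pm$, $G$, $H$ is finite and nonzero at $z_0=W(\tau_r)$, is false under exactly the assumption you want to refute, so no contradiction results.

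Put $\hat f(z)=2\pi i\int_{i\infty}^{z}f(\tau)\,d\tau$, define $\hat f^c$ likewise, and suppose $\varphi^c(z_0)=O$, i.e.\ $\hat f^c(z_0)\in L^c$.
\begin{itemize}
\item Since $f\mid_{\iota_2(\omega)}=f$, the $f$-period $2\pi i\int_{i\infty}^{\iota_2(\omega)\infty}f$ is $0$.
\item The $f^c$-period is $P^c=(1-\omega^k)\hat f^c(z_0)\in\sqrt{-3}L^c$. The character $\nu^c$ attached to $K^c$ by Proposition~\ref{LK} is trivial there, because $\nu^c(\omega w)=\nu^c(w)$.
\item Hence your Step~2 identity reads $F(\iota_2(\omega)z)=\omega^{\pm k}F(z)$, with $\omega^{\pm k}\neq 1$, for every $F\in\{F_+,F_-,G,H\}$.
\item Evaluating at the fixed point gives $F(z_0)\in\{0,\infty\}$ for all four functions.
\end{itemize}

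A local computation shows the same thing. Take a coordinate $w$ at $z_0$ in which $\iota_2(\omega)$ acts by $w\mapsto\zeta w$.
\begin{itemize}
\item The form $f^c\,dz$ is an $\omega^k$-eigenvector, so $\hat f^c-\hat f^c(z_0)$ vanishes to order prime to $3$.
\item The form $f\,dz$ is invariant, so $\hat f-\hat f(z_0)$ vanishes to order divisible by $3$.
\item Since $K,J,K^c,J^c$ have only simple zeros and poles, $\ff^c$ and $\fg^c$ have poles at $z_0$ of order prime to $3$. By contrast, $\ff$ and $\fg$ have order divisible by $3$.
\item Each of $F_+=\ff/\ff^c$, $F_-=\fg/\fg^c$, $G=\ff\fg^c$, $H=\ff^c\fg$ contains exactly one factor $\ff^c$ or $\fg^c$, so none can be finite and nonzero at $z_0$.
\end{itemize}
The reasoning that ``$\infty/\infty$'' or ``$\infty\cdot 0$'' is finite and nonzero overlooks this mismatch of orders. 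A finite nonzero value at $z_0$ is only available once you already know $\varphi^c(z_0)\neq O$, so the step presupposes the conclusion. What is missing is an independent reason why $\hat f^c(W(\tau_r))\notin L^c$. This step is taken from the paper's proof of Proposition~\ref{torsion}, where it has the same defect, so mirroring that proof cannot supply the missing reason.

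Your bookkeeping also contains errors.
\begin{itemize}
\item For $-r\equiv\omega\bmod\varpi$, Theorem~\ref{cmpoint1} says $\varphi(W(\tau_r))$ is defined over $K(\sqrt[3]{\varpi})$, not over $K$; it is $\varphi^c(W(\tau_r))$ that is $K$-rational. So Proposition~\ref{cusp} does not apply to $\varphi(W(\tau_r))$. To get $y(z_0)\in\{\infty,\pm\bar\varpi^i/2\}$ you must use the Galois action from Theorem~\ref{cmpoint1} to twist by $\phi$ into $E_{p^i}(K)$, whose torsion is $\{O,(0,\pm p^i/2)\}$.
\item Your Step~1 remark that $f$ is the form multiplied by a nontrivial cube root of unity contradicts your own assignment $f\mid_{\iota_2(\omega)}=f$, $f^c\mid_{\iota_2(\omega)}=\omega^{\pm1}f^c$. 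That assignment is the correct one; it is what the computation $f\mid_D=f$ in the proof of Lemma~\ref{L=L0} gives.
\item The hedge that a reversed assignment would do equally well is wrong. The argument only ever concerns the point on the non-invariant side, so the assignment decides which statement gets proved.
\item The multiplier in Step~2 should involve $\nu^c$ evaluated on the $f^c$-period, since the $f$-period is zero.
\end{itemize}
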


\begin{remark}
By \eqref{action0} and \eqref{actionr},  we can see that the torsion point 
$$\varphi(\tau_r)=-\varphi(0).$$
\end{remark}

\begin{theorem}\label{non-torsion}
Let $r$ be an integer such that $r^2-r+1\equiv 0\mod 3p$. If $-r\equiv \omega^2\mod\varpi$, then $\varphi(\tau_r)$ and $\varphi^c(W(\tau_r))$ are non-torsion.  If $-r\equiv \omega\mod\varpi$, then $\varphi^c(\tau_r)$ and $\varphi(W(\tau_r))$ are non-torsion. 
\end{theorem}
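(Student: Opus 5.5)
We treat the case $-r\equiv\omega^2\mod\varpi$ and the point $\varphi(\tau_r)$; the other three assertions follow by the same argument with $\varpi$ and $\bar\varpi$ interchanged, using Theorem \ref{cmpoint1} and Proposition \ref{torsion1} in place of Theorem \ref{cmpoint} and Proposition \ref{torsion}. We argue by contradiction and assume $\varphi(\tau_r)$ is torsion.

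\textbf{Rationality of the values.} By Theorem \ref{cmpoint}, $\varphi(\tau_r)$ lies in $E_{\bar\varpi^i}(K(\sqrt[3]{\varpi}))$, and the twisted point $\phi(\varphi(\tau_r))$ lies in $E_{p^i}(K)$. The conjugate construction shows $\varphi^c(\tau_r)\in E_{\varpi^i}(K)$, which is a $\sqrt{-3}$-torsion point. So $y^c(\tau_r)\in\{\infty,\pm\varpi^i/2\}$. By Proposition \ref{cubic}, $F_\pm,G,H$ are $\Gamma$-invariant modular functions, and their Fourier coefficients are algebraic (in $\CO_K$, as in the proof of Proposition \ref{cubic}). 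Shimura reciprocity, with the same compact group $U$ used in the proof of Theorem \ref{cmpoint}, then gives $F_\pm(\tau_r),G(\tau_r),H(\tau_r)\in K(\sqrt[3]{\varpi})$.

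\textbf{The torsion point and its $y$-coordinate.} If $\varphi(\tau_r)$ is torsion, then $\phi(\varphi(\tau_r))\in E_{p^i}(K)_{\tor}$. Arguing as in Proposition \ref{cusp} (via Monsky, or via the torsion computation of $E_1$ over $K(\sqrt[3]{\cdot})$), this group is $\{O,(0,\pm p^i/2)\}$. Hence $\varphi(\tau_r)\in\{O,(0,\pm\bar\varpi^i/2)\}$, so $y(\tau_r)\in\{\infty,\pm\bar\varpi^i/2\}$, and Theorem \ref{cmpoint} forces $\sigma_{\omega_\fp}$ to act compatibly. By Proposition \ref{torsion}, $\varphi^c(\tau_r)\neq O$, so $y^c(\tau_r)=\pm\varpi^i/2$. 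We now run through the cases:
\begin{itemize}
\item If $y(\tau_r)=\infty$ and $y^c(\tau_r)=\varpi^i/2$, we look at $F_-=\ff/\ff^c$. It is infinite, so we use $H$ or $G$ instead.
\item In general we choose, among $F_+,F_-,G,H$, the one whose cube in \eqref{id1}--\eqref{id4} is a finite nonzero element of $K$ at $\tau_r$.
\end{itemize}
For instance, if $y(\tau_r)=\bar\varpi^i/2$ and $y^c(\tau_r)=-\varpi^i/2$, then $F_+^3(\tau_r)=(\bar\varpi^i)/(0)$ is infinite. The workable pairings come out as follows:
\begin{itemize}
\item $(y,y^c)=(\bar\varpi^i/2,\varpi^i/2)$: use $H^3=\bar\varpi^i\varpi^i=p^i$.
\item $(y,y^c)=(\bar\varpi^i/2,-\varpi^i/2)$: use $F_+^3$ or $H^3$, which is $\bar\varpi^i$ times a unit, after adjusting.
\item $(y,y^c)=(-\bar\varpi^i/2,\ldots)$: handled analogously.
\item $y=\infty$: take the ratio with the appropriate leading behaviour, exactly as in the proof of Proposition \ref{torsion}.
\end{itemize}
In every case we find a function $\Phi\in\{F_\pm,G,H\}$ with $\Phi(\tau_r)^3=u\,\varpi^a\bar\varpi^b$, where $u\in\CO_K^\times$ and $b\not\equiv 0\mod 3$. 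For the infinite cases this comes from the leading coefficients of $\CY(q)$ being units (Proposition \ref{yq}) and from local expansions.

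\textbf{Contradiction.} This forces $\sqrt[3]{\bar\varpi}\in K(\sqrt[3]{\varpi})$ up to units and powers of $\varpi$. That is impossible: by Kummer theory, $K(\sqrt[3]{\varpi})$ contains the cube roots only of elements of $\langle\varpi\rangle K^{\times3}$, and $\bar\varpi\notin\langle\varpi,\omega\rangle K^{\times3}$ by unique factorization. Hence $\varphi(\tau_r)$ is non-torsion.

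The main obstacle is the case bookkeeping when $y(\tau_r)=\infty$. There one must show that the relevant ratio has a finite nonzero value whose cube carries an exact $\bar\varpi^{\pm i}$ factor. We would handle this by passing to a local uniformizer at the point $O$ and using that the leading coefficients are units (Propositions \ref{yq} and \ref{torsion}). A genuine subtlety, which may need a separate argument, is whether a unit factor could be absorbed.
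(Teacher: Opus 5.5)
\textbf{The gap: the case $\varphi(\tau_r)=O$.} Your four-case Kummer argument is the paper's, and it works once both $\varphi(\tau_r)$ and $\varphi^c(\tau_r)$ are known to be \emph{nonzero} torsion points. Your case table is mislabeled: at $(y,y^c)=(\bar\varpi^i/2,\varpi^i/2)$ one has $H^3=0$, and the right identity is \eqref{id1}, with value $\bar\varpi^i/\varpi^i$. That is cosmetic.

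The real gap is the case $\varphi(\tau_r)=O$, which you yourself flag. Proposition \ref{torsion} only gives $\varphi^c(\tau_r)\neq O$, and your proposed repair cannot work. The factor $\bar\varpi^{i}$ that produces the contradiction comes only from the value $y(\tau_r)=\pm\bar\varpi^i/2$. Proposition \ref{yq} concerns $q$-expansions at the cusp $\infty$ and says nothing about expansions at $\tau_r$.

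Worse, there is no finite nonzero value to use at all:
\begin{itemize}
\item The matrix $\iota_1(\omega)$ is elliptic of order $3$ and fixes $\tau_r$, so it rotates a local coordinate $t$ at $\tau_r$ by a primitive cube root of unity $\zeta$.
\item It acts on $f$ and $f^c$ by $\lambda\xi(r-1)$ and $\lambda\bar\xi(r-1)$. Here $\lambda\neq 1$ is the common eigenvalue of $A^{-1}$ (resp.\ $(B^2A)^{-1}$) from Proposition \ref{AB}. Also $\xi(r-1)\neq1$, since $r-1\equiv r^2$ is a primitive cube root of unity mod $p$ and $p\not\equiv 1\bmod 9$. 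Hence $\iota_1(\omega)$ acts trivially on exactly one of $f,f^c$.
\item An eigenvalue $\epsilon$ forces the vanishing order $m$ of $\int_{\tau_r}^{\tau}f$ (in $t$) to satisfy $\zeta^m=\epsilon$. So the orders $m,m^c$ for $f,f^c$ lie in different classes mod $3$.
\item If $\varphi(\tau_r)=O$, then $y$ has a pole of order $3m$ at $\tau_r$, while $y^c\mp\varpi^i/2$ vanishes to order $3m^c\neq 3m$. So every one of $F_\pm^3,G^3,H^3$ is $0$ or $\infty$ at $\tau_r$.
\end{itemize}
The same count applies to the non-vanishing claims in the proof of Proposition \ref{torsion}, so that template cannot be borrowed here.

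\textbf{The missing idea: coupling $\tau_r$ with $W(\tau_r)$.} The paper sidesteps $y=\infty$ via the Atkin--Lehner involution. By \eqref{involution},
\[
2\pi i\int_{i\infty}^{\tau_r}f^c(z)\,dz=\frac{1}{C}\Bigl(2\pi i\int_{i\infty}^{W(\tau_r)}f(z)\,dz-2\pi i\int_{i\infty}^{0}f(z)\,dz\Bigr),
\]
and symmetrically with $f,f^c$ swapped, using $f^c|_W=C^{-1}f$. Since $z\mapsto z/C$ carries $L$ onto $L^c$, these identities give isomorphisms of the curves over $\BC$ under which:
\begin{itemize}
\item $\varphi^c(\tau_r)$ corresponds to $\varphi(W(\tau_r))-\varphi(0)$;
\item $\varphi(\tau_r)$ corresponds to $\varphi^c(W(\tau_r))-\varphi^c(0)$.
\end{itemize}
As $\varphi(0)$ and $\varphi^c(0)$ are nonzero torsion points (Proposition \ref{torsion0}), $\varphi(\tau_r)$ is torsion if and only if $\varphi^c(W(\tau_r))$ is, and the two are never both $O$.

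So suppose $\varphi(\tau_r)$ is torsion. Then $\varphi^c(W(\tau_r))$ is torsion too. Propositions \ref{torsion} and \ref{torsion1}, taken as given as you do, yield $\varphi^c(\tau_r)\neq O$ and $\varphi(W(\tau_r))\neq O$. Therefore one of the pairs $\{\varphi(\tau_r),\varphi^c(\tau_r)\}$, $\{\varphi(W(\tau_r)),\varphi^c(W(\tau_r))\}$ consists of nonzero torsion points.

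Your Kummer argument, run at $\tau_r$ or at $W(\tau_r)$, then gives the contradiction; rationality at $W(\tau_r)$ comes from Theorem \ref{cmpoint1} and its conjugate. The non-torsion of $\varphi^c(W(\tau_r))$ also follows from the same coupling, rather than from a separate argument. Without this step, your proof does not close.
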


\begin{proof}By \eqref{involution}, we have
\begin{equation}\label{cw}
2\pi i\int_{\infty}^{\tau_r}f^c(z)dz=\frac{2\pi i}{C}\int_{\infty}^{W(\tau_r)}f(z) dz-\frac{2\pi i}{C}\int_{\infty}^{0}f(z)dz,
\end{equation}
where $C$ is the constant in \eqref{involution}.
By Proposition \ref{torsion0}, $\varphi(0)\neq O$ and $\varphi^c(0)\neq O$. Then from \eqref{cw}, we see that at least one of $\varphi^c(\tau_r)$ and $\varphi(W(\tau_r))$ is not $O$. Then by Proposition \ref{torsion} and \ref{torsion1}, 
Both $\varphi(\tau_r)$ and $\varphi^c(\tau_r)$ are not $O$ or Both $\varphi(W(\tau_r))$ and $\varphi^c(W(\tau_r))$ are not $O$. Without loss of generality, we can assume $\varphi(\tau_r)\neq O$ and $\varphi^c(\tau_r)\neq O$. 

Assume $\varphi(\tau_r)$ and $\varphi^c(\tau_r)$ are both torsion. 
Similar to Theorem \ref{cmpoint}, $F_+(\tau_r),F_-(\tau_r),G(\tau_r),H(\tau_r)$ are all defined over $K(\sqrt[3]{\varpi})$ or $K(\sqrt[3]{\bar\varpi})$ depending on $-r\equiv \omega^2\mod\varpi$ or $-r\equiv \omega\mod\varpi$. Since $\varphi(\tau_r)$ and $\varphi^c(\tau_r)$ are non-zero $\sqrt{-3}$-torsion points, then $y(\varphi(\tau_r))=\frac{\bar\varpi^i}{2}$ or $-\frac{\bar\varpi^i}{2}$ while $y(\varphi^c(\tau_r))=\frac{\varpi^i}{2}$ or $-\frac{\varpi^i}{2}$. If $y(\varphi(\tau_r))=\frac{\bar\varpi^i}{2}$ and $y(\varphi^c(\tau_r))=\frac{\varpi^i}{2}$, then by equation (\ref{id1}), $\frac{\bar\varpi^i}{\varpi^i}$ is a cube in $K(\sqrt[3]{\varpi})$ or $K(\sqrt[3]{\bar\varpi})$, but this is impossible. Using other three equations, we can show that other three cases of $y(\varphi(\tau_r))$ and $y(\varphi^c(\tau_r))$ also cannot happen. So $\varphi(\tau_r)$ and $\varphi^c(\tau_r)$ are not both torsion points. By \eqref{cw}, $\varphi(W(\tau_r))$ and $\varphi^c(W(\tau_r))$ are also not both torsion points. Then the theorem follows from Theorem \ref{cmpoint} and Theorem \ref{cmpoint1}.
\end{proof}

\begin{theorem}
Let $p\equiv 4,7\mod 9$ be a prime, then both $p$ and $p^2$ are sums of two rational cubes. 
\end{theorem}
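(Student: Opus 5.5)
The deduction is short: pass from the non-torsion CM points of Theorem \ref{non-torsion} to a non-torsion rational point on $E_{p^i}$, and then to a cube-sum representation. Fix $i\in\{1,2\}$ and choose $r\in\BZ$ with $r^2-r+1\equiv 0\mod 3p$ and $-r\equiv\omega^2\mod\varpi$. Such an $r$ exists, since the two roots of $x^2-x+1$ modulo $p$ correspond to the two residues $-\omega,-\omega^2$ modulo $\varpi$, and the condition modulo $3$ is automatic.

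By Theorem \ref{non-torsion}, $P:=\varphi(\tau_r)\in E_{\bar\varpi^i}$ is non-torsion. By Theorem \ref{cmpoint}, $P$ is defined over $K(\sqrt[3]{\varpi})$ and $\sigma_{\omega_\fp}(P)=[\omega^{\pm1}]P$. Apply the twisting map $\phi:(x,y)\mapsto(\sqrt[3]{\varpi^{2i}}x,\varpi^i y)$ to $E_{p^i}:y^2=x^3+p^{2i}/4$. I will check that the cocycle coming from $\sigma_{\omega_\fp}$ acting on $\sqrt[3]{\varpi^{2i}}$ (Proposition \ref{LCF}(3)) exactly cancels the $[\omega^{\pm1}]$ twist on $P$. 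Recall that $[\omega](x,y)=(\omega x,y)$, and that the exponent $2i$ and the case split $p^i\equiv4,7\mod 9$ (equivalently $N=9p,27p$) must match. This is the one place where a sign or exponent mismatch could occur, so I will track the four cases $p\equiv 4,7$, $i=1,2$ explicitly. Once the cancellation is verified, $Q=\phi(P)$ is fixed by $\Gal(K(\sqrt[3]{\varpi})/K)$, so $Q\in E_{p^i}(K)$, and $Q$ is non-torsion because $\phi$ is an isomorphism over $\bar K$.

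Next I descend from $K$ to $\BQ$. Complex conjugation acts on $E_{p^i}(K)\otimes\BQ$, splitting it as $E_{p^i}(\BQ)\otimes\BQ\oplus E^{-3}_{p^i}(\BQ)\otimes\BQ$, where the second summand is the quadratic twist by $-3$. Multiplication by $\sqrt{-3}$ interchanges the two eigenspaces, since $\overline{\sqrt{-3}}=-\sqrt{-3}$. Hence a single non-torsion $K$-point forces $\operatorname{rank}E_{p^i}(\BQ)\ge1$: the space $E(K)\otimes\BQ$ is a nonzero $K$-vector space, and each $K$-line meets both eigenspaces nontrivially. Alternatively, one can take $Q+\bar Q$ or $\sqrt{-3}(Q-\bar Q)$; one of them is a nonzero element of the $+$-part, hence non-torsion in $E_{p^i}(\BQ)$.

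Finally, $E_{p^i}$ is $\BQ$-isogenous to $y^2=x^3-432p^{2i}$, which is the curve attached to $X^3+Y^3=p^i$. A rational point of infinite order on it gives rational $X,Y$ with $X^3+Y^3=p^i$ via the standard birational map $X=\frac{36p^i+y}{6x}$, $Y=\frac{36p^i-y}{6x}$ (the finitely many exceptional points are torsion). Since isogenies preserve rank, this completes the proof for $i=1,2$, i.e.\ for both $p$ and $p^2$. I expect the main obstacle to be the precise verification that the twist descends the point to $K$, that is, matching the characters in Theorem \ref{cmpoint} against Proposition \ref{LCF}(3).
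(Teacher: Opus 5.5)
Your proposal is essentially the paper's own proof. It uses the same choice of $r$ with $-r\equiv\omega^2 \bmod \varpi$, the same point $\phi(\varphi(\tau_r))\in E_{p^i}(K)$ obtained from Proposition \ref{LCF}, Theorem \ref{cmpoint} and Theorem \ref{non-torsion}, and the same descent to $\BQ$ via complex conjugation, and it fills in the existence of $r$ and the explicit cube-sum map where the paper leaves them implicit. The one step you defer (the cancellation) is also only asserted in the paper, and as you set it up it does not close: with $[\omega](x,y)=(\omega x,y)$ and $(\sqrt[3]{\varpi})^{\sigma_{\omega_\fp}-1}=\omega^{\epsilon}$, the statement of Theorem \ref{cmpoint} amounts to $\sigma_{\omega_\fp}(P)=[\omega^{-i\epsilon}]P$ in all four cases, so $\sigma_{\omega_\fp}$ multiplies the $x$-coordinate of $\phi(P)$ by $\omega^{2i\epsilon-i\epsilon}=\omega^{i\epsilon}\neq 1$; it cancels only after fixing a consistent normalization, e.g.\ evaluating the symbol in Proposition \ref{LCF}(3) with the tame-symbol convention of part (1), which gives $\omega^{-(p-1)/3}$ instead of $\omega^{(p-1)/3}$.
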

\begin{proof}
For $i=1,2$, we consider the following maps:
\begin{equation}\label{twist-map}
\xymatrix{X_{\Gamma_1(N)}\ar[r]^{\varphi}& E_{\bvarpi}\ar[r]^{\phi}& E_{p^i}}
\end{equation}
where $\phi$ is the twisting map given by
\[(x,y)\mapsto (\sqrt[3]{\varpi^{2i}}x, \varpi^i y).\]
Let $r\in\BZ$ such that $r^2-r+1\equiv 0\mod 3p$ and $-r\equiv \omega^2\mod\varpi$. Then by Proposition \ref{LCF}, Theorem \ref{cmpoint} and Theorem \ref{non-torsion}, 
\[\phi(\varphi(\tau_r))\in E_{p^i}(K)\]
is non-torsion. If the complex conjugate $\ov{\phi(\varphi(\tau_r))}=-\phi(\varphi(\tau_r))+T$ for some torsion point $T$ then $[\sqrt{-3}]\phi(\varphi(\tau_r))\in E_{p^i}(\BQ)$ is non-torsion. Otherwise $\ov{\phi(\varphi(\tau_r))}+\phi(\varphi(\tau_r))$ belongs to $E_{p^i}(\BQ)$ is non-torsion.
\end{proof}

\section{A conditional proof of the main theorem}
In this section we will show that Artin's primitive root conjecture for arithmetic progressions or the Generalized Riemann Hypothesis (GRH) for number fields also imply the main theorem in this paper. 

In 1927, Artin conjectured that any integer $n$ which is not $-1$ or a perfect square is a primitive root for infinitely many primes. This conjecture is quite open but Hooley proved it under the GRH for some number fields \cite{Hooley} in 1967. The most well known unconditional result is due to Heath-Brown. He proved there are at most two primes that do not satisfy Artin's conjecture \cite{HB} improving Gupta-Murty's breakthrough result in \cite{GM}. 
But we even do not know a concrete number satisfying Artin's conjecture. We also do not know whether for every primes $p$ there exists at least one prime q such that $p$ is a primitive root of $q$.
There are many kinds of generalizations of Artin's conjecture, see Moree's survey \cite{Moree}. Here we just mention its generalization to arithmetic progressions by Lenstra \cite{Lenstra}, i.e. every integer which is not $-1$ or a perfect squares should be a primitive root for infinitely many primes in every suitable arithmetic progressions without obvious obstructions. Lenstra also proved that GRH implies Artin's conjecture for arithmetic progressions, see also \cite{Moree1} for an explicit asymptotic formula. In this section we will make the following assumption:
\begin{enumerate}
\item[$(\star)$] For $p\equiv 4,7\mod9$ a prime, Let $N=9p$ or $27p$. For every  integer $d\equiv 1\mod N, c\equiv 0\mod N$,  there always exists a number $d'$ in the progression $\set{b+cn\mid n\in\BZ}$ such that $p$ is a primitive root mod $d'$.
\end{enumerate}
Obviously, Artin's conjecture for arithmetic progressions or the GRH imply the assumption $(\star)$.

In the following we prove that Proposition \ref{cubic} can be deduced from $(\star)$. First of all, we need some preparation. For any $\tau\in\BH\cup\BQ$,
\begin{equation}\label{shift}
\int_{0}^{\tau+n}f(z)dz=\int_{0}^{\infty}f(z)dz+\int_{\infty}^{\tau+n}f(z)dz=\int_{0}^{\tau}f(z)dz
\end{equation}

\begin{proposition}
Let $\gamma=\matrixx{a}{b}{c}{d}\in\Gamma_1(N)$. Assume $(\star)$, then there exist integers $l_0$ and $l_1$ such that 
\[\int_{0}^{p^{l_0}\gamma' 0}f(\tau)d\tau=\int_{0}^{-p^{l_0}\gamma' 0}f^c(\tau)d\tau=0.\]
where 
\[\gamma'=\gamma\matrixx{1}{l_1}{0}{1}.\]
\end{proposition}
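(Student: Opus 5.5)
The idea is to exploit the freedom in the two parameters $l_1$ and $l_0$. First I use $(\star)$ to replace $\gamma$ by a right translate $\gamma'$ whose lower-right entry $d'$ has $p$ as a primitive root. Then I use the primitive root to move the cusp $\gamma'0$, by multiplication by a power of $p$, onto a cusp whose period is forced to vanish by an elliptic-fixed-point argument, as in the proof of Lemma \ref{L=L0}.

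\textbf{Step 1 (choice of $l_1$).} Write $\gamma'=\gamma\matrixx{1}{l_1}{0}{1}=\matrixx{a}{al_1+b}{c}{cl_1+d}$. Since $d\equiv 1\mod N$ and $c\equiv 0\mod N$, assumption $(\star)$ gives an $l_1$ such that $d'=cl_1+d$ has $p$ as a primitive root modulo $d'$. In particular $(\BZ/d'\BZ)^\times$ is cyclic and generated by $p$. Put $b'=al_1+b$. Then $\gamma'0=b'/d'$ with $(b',d')=1$ and $d'\equiv 1\mod N$.

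\textbf{Step 2 (choice of $l_0$).} For any $l_0$ we have $p^{l_0}\gamma'0=p^{l_0}b'/d'$. By the shift invariance \eqref{shift}, the integral $\int_0^{p^{l_0}b'/d'}f$ depends only on $p^{l_0}b'$ modulo $d'$. Since $p$ generates $(\BZ/d'\BZ)^\times$, I can choose $l_0$ so that $p^{l_0}b'\equiv u\mod d'$ for any prescribed unit $u$.

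I will choose $u$ so that the cusp $u/d'$ equals $D0$ up to an integer translate, where $D\in\GL_2(\BQ)^+$ is a matrix with the following properties:
\begin{itemize}
\item $D$ has a fixed point $z_0$ in $\BH$, namely a conjugate of $\tau_r$ or of $W(\tau_r)$, built from $\iota_1(\omega)$, $\iota_2(\omega)$ twisted by $A$ and $B$ as in \eqref{omega0}--\eqref{omega32};
\item $f\mid_D=f$.
\end{itemize}
With such a $D$ the argument of Lemma \ref{L=L0} applies verbatim:
\[\int_0^{D0}f=\int_0^{z_0}f+\int_{z_0}^{D0}f=\int_0^{z_0}f+\int_{z_0}^{0}f\mid_D=0.\]
Concretely, I expect the lower-left entry of $D$ to be $d'$ after conjugating by $W$. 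Recall $W$ interchanges $0$ and $\infty$, and by \eqref{involution} it interchanges $f$ and $f^c$ up to the constant $C$. The condition $d'\equiv 1\mod N$ is what should make such an elliptic $D$ exist with the right Nebentypus value $\xi(D)=1$.

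\textbf{Step 3 ($f^c$ and the sign).} For $f^c$, the coefficients are complex conjugate and the cusp is $-p^{l_0}\gamma'0$. Complex conjugation sends $\int_0^{x}f$ to $\overline{\int_0^{-x}f^c}$ up to orientation, so the vanishing for $f^c$ at $-p^{l_0}\gamma'0$ should follow formally from the vanishing for $f$ at $p^{l_0}\gamma'0$.

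\textbf{Main obstacle.} The hard part is Step 2. One must identify an explicit residue class $u$ modulo $d'$, depending only on $d'$, and an elliptic matrix $D$ with $f\mid_D=f$ and $D0\equiv u/d'\mod\BZ$. One must also check that the requirement that $u$ be a power of $p$ modulo $d'$ is not an obstruction; this is exactly where primitivity is used. I would first try $D$ of the shape $\iota_2(\omega)B^2$ with $r$ replaced by a solution of $r^2-r+1\equiv0\mod d'$, so that $u\equiv r$ or $r-1$. I would verify $f\mid_D=f$ via Proposition \ref{AB} as in \eqref{omega12}.
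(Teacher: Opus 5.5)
Your Step 1 is the same as the paper's first step. Your reduction in Step 2, to choosing the residue $u\equiv p^{l_0}b'\pmod{d'}$, is correct. The gap is that the matrix $D$ is never produced, and the mechanism you propose cannot produce it.

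Every matrix you suggest lies in $G=\langle\Gamma_0(N),A,B\rangle$. This covers $\iota_1(\omega)$ and $\iota_2(\omega)$ twisted by $A$ and $B$, and also their $W$-conjugates, because $WAW^{-1}=B^{-1}$ and $WBW^{-1}=A^{-1}$. $G$ is also the group on which Proposition \ref{AB} actually controls $f\mid_D$.

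Conjugation by $\mathrm{diag}(3,1)$ carries $G$ into $\Gamma_0(p)$. So every $D\in G$ has the form $\matrixx{a}{b/3}{3c}{\delta}$ with $\matrixx{a}{b}{c}{\delta}\in\Gamma_0(p)$. Suppose $D0=b/(3\delta)\equiv u/d'\pmod{\BZ}$ with $(u,d')=1$.
\begin{itemize}
\item Since $3\nmid d'$, this forces $3\mid b$ and $\delta=\pm d'\equiv\pm1\pmod p$.
\item Write $s=a+\delta$ for the trace. Then $bc=s\delta-\delta^2-1\equiv \pm s-2\pmod p$.
\item For an elliptic element $s\in\{-1,0,1\}$, so $\pm s-2$ lies in $\{-3,-2,-1\}$. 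This is nonzero modulo $p\geq 7$, contradicting $p\mid c$.
\end{itemize}
Hence no elliptic $D$ of the kind you want exists in $G$, for any choice of $u$. Replacing $r$ so that $\iota_2(\omega)$ leaves $G$ does not help, because then you lose control of $f\mid_D$.

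The missing idea is that no elliptic point is needed: take $u=1$. The paper chooses $l_0$ with $p^{l_0}b'\equiv 1\pmod{d'}$, so that $\pm p^{l_0}\gamma'0\equiv \pm 1/d'\pmod{\BZ}$. Primitivity of $p$ modulo $d'$ is used exactly to reach the residue $b'^{-1}$.

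The paper then uses a parabolic matrix, $\matrixx{1}{0}{\pm(d'-1)}{1}$.
\begin{itemize}
\item It lies in $\Gamma_1(N)$ precisely because $d'\equiv 1\pmod N$.
\item It fixes the cusp $0$.
\item It sends $\pm1$ to $\pm 1/d'$.
\end{itemize}
Therefore $\int_0^{\pm1/d'}f(\tau)d\tau=\int_0^{\pm1}f(\tau)d\tau=0$ by \eqref{shift}. (The paper's display writes $\pm1/b'$, but $\pm1/d'$ is what is meant.)

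This argument uses only that $f$ is a $1$-periodic weight-two cusp form on $\Gamma_1(N)$. So it applies verbatim to $f^c$ and to both signs, and your Step 3 is unnecessary, though harmless.
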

\begin{proof}
By $(\star)$, there is an integer $l_1$ such that $p$ is a primitive root modulo $d'=d+l_1c$. Then for $b'=b+l_1a$, there exists $l_0$ such that
$p^{l_0}b'\equiv 1\mod d'$. In this case, using \eqref{shift}

\begin{eqnarray*}
\int_{0}^{\pm p^{l_0}\gamma' \cdot 0}f(\tau)d\tau&=&\int_{0}^{\frac{\pm 1}{b'}}f(\tau)d\tau\\
&=&\int_{0}^{\matrixx{\pm 1}{0}{d'-1}{\pm 1}\cdot 1}f(\tau)d\tau\\
&=&\int_{0}^{1}f(\tau)d\tau=0
\end{eqnarray*}

\end{proof}

\begin{proposition}\label{plus}
Let $\gamma=\matrixx{a}{b}{c}{d}\in \Gamma_1(N)$. Assume $(\star)$, then
\[2\pi i\lrb{\int_{i\infty}^{\frac{a}{c}}f(\tau)d\tau+\int_{i\infty}^{-\frac{a}{c}}f(\tau)d\tau}\in \sqrt{-3}L\]
\end{proposition}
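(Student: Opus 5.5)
The plan is to reduce the statement to a congruence between a period of $f$ and the complex conjugate of a period of $f^c$, and then to prove that congruence by running both periods down to a vanishing integral with the previous proposition.

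\emph{Reduction.} Put $x=a/c=\gamma\infty$. The matrix $\delta=\matrixx{a}{-b}{-c}{d}$ lies in $\Gamma_1(N)$ and $\delta\infty=-x$, so both $P_1=2\pi i\int_{i\infty}^{x}f$ and $P_2=2\pi i\int_{i\infty}^{-x}f$ lie in $L$. I will use the character $\nu$ of Proposition \ref{LK}. It is a nontrivial cubic character of $L$ with $\nu(\omega w)=\nu(w)$, so its kernel contains $(1-\omega)L$. That sublattice has index $3$, and a nontrivial character of order $3$ also has kernel of index $3$, hence $\ker\nu=\sqrt{-3}L$. So it suffices to show $P_1+P_2\in\sqrt{-3}L$, that is, $P_2\equiv -P_1 \bmod \sqrt{-3}L$.

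\emph{Conjugation symmetry.} Since the coefficients of $f^c$ are the complex conjugates $\bar a_n$, we have $f(-\bar\tau)=\overline{f^c(\tau)}$. The substitution $\tau\mapsto-\bar\tau$ then gives
\[
P_2=-\overline{2\pi i\int_{i\infty}^{x}f^c(\tau)\,d\tau},
\]
and this lies in $\overline{L^c}=L$. The claim therefore becomes
\[
2\pi i\int_{i\infty}^{x}f\equiv \overline{2\pi i\int_{i\infty}^{x}f^c}\mod \sqrt{-3}L .
\]
This is the period-level version of the congruence $f\equiv f^c \bmod \sqrt{-3}$, which holds because $a_n\neq0$ only for $n\equiv1\bmod 3$.

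\emph{Descending to a vanishing integral.} Replace $\gamma$ by $\gamma'=\gamma T^{l_1}$; the periods do not change, by \eqref{shift} together with $\gamma'\infty=\gamma\infty$. As in Section 6, I will pass from $\int_{i\infty}^{\gamma'\infty}$ to $\int_0^{\gamma'0}$. Next I apply \eqref{fexpand} and the twist formula \eqref{integral-twist} for $n=l_0$, once for $f$ at $\gamma'0$ and once for $f^c$ at $-\gamma'0$, where the latter uses $\tilde f^c=g_{\bar\chi}$. By the previous proposition, the tail terms $\int_0^{p^{l_0}\gamma'0}f$ and $\int_0^{-p^{l_0}\gamma'0}f^c$ vanish. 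Both periods are then finite sums of integrals of the rational newform $g$ between the cusps $u/p$ and $p^k\gamma'0+u/p$, weighted by $a_p^k/(p^k\tau(\bar\chi))$ and $\bar\chi(u)$ (respectively by the conjugate data). Since $g$ has real coefficients and $\overline{\tau(\chi)}=\tau(\bar\chi)$, conjugating the $f^c$ expression reproduces the $f$ expression term by term, except with $\bar\chi(-u)$ in place of $\bar\chi(u)$ and with $a_p=\bar\varpi$ replaced by $\varpi$. The sign is harmless because $\chi(-1)=1$. Moreover $\varpi\equiv\bar\varpi\equiv1 \bmod \sqrt{-3}$, since $\varpi\equiv1\bmod 3$.

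\emph{Main obstacle.} The hardest step is turning this term-by-term congruence into a congruence modulo $\sqrt{-3}L$, because the individual pieces involve $1/\tau(\chi)$ and powers of $p$ in the denominators. I would first use \eqref{fg}, which expresses $\tau(\chi)L_g$ in terms of $L_f$ with no factors of $3$. Each difference $(\varpi^k-\bar\varpi^k)\cdot(\text{twisted }g\text{-period})$ is then $\sqrt{-3}$ times an element of $\frac{1}{m}\tau(\chi)L_g=L_f$, and $m$ is a power of $\bar\varpi$, hence prime to $3$. If the bookkeeping with the power of $\bar\varpi$ becomes awkward, the fallback is to work $3$-adically, in $L\otimes\BZ_3$: there $m$ and $p$ are units, and $\sqrt{-3}L$ is detected $3$-adically.
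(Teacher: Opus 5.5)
Your reformulation has a sign error, and the congruence you then try to prove is false. The substitution $\tau\mapsto-\bar\tau$ gives $\int_{i\infty}^{-x}f=-\overline{\int_{i\infty}^{x}f^c}$ for the bare integrals. Since $\overline{2\pi i}=-2\pi i$, this means $P_2=+\overline{2\pi i\int_{i\infty}^{x}f^c}$; this is the identity the paper quotes from \cite[2.1.3]{Cremonabook}, and you can confirm it on the imaginary axis, where $f^c(it)=\overline{f(it)}$. So your target $2\pi i\int_{i\infty}^{x}f\equiv\overline{2\pi i\int_{i\infty}^{x}f^c}$ is equivalent to $P_1-P_2\in\sqrt{-3}L$. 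Combined with the proposition, this would put $2P_1$, hence $P_1$, in $\sqrt{-3}L$ for every $\gamma\in\Gamma_1(N)$, i.e. $L\subseteq\sqrt{-3}L$, which is absurd.

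The heuristic you cite does not rescue this. A coefficientwise congruence $f\equiv f^c$ gives no congruence between periods that lie in the different lattices $L$ and $L^c=C^{-1}L$. The descent step is also miscomputed. Conjugation sends $\bar a_p^k=\varpi^k$ back to $\bar\varpi^k$, $\tau(\chi)$ to $\tau(\bar\chi)$ and $\chi(u)$ to $\bar\chi(u)$. So conjugating the expansion of $2\pi i\int_0^{-\gamma'0}f^c$ returns the expansion of $P_1$ exactly, and no factors $\varpi^k-\bar\varpi^k$ appear. The comparison is tautological, since $\overline{2\pi i\int_0^{-\gamma'0}f^c}=P_1$, while the quantity actually involved is $\overline{2\pi i\int_0^{\gamma'0}f^c}=P_2$.

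The missing idea, which is the whole content of the paper's proof, is the real structure of $L_g$. Expand both $P_1=2\pi i\int_0^{\gamma'0}f$ and $P_2=2\pi i\int_0^{-\gamma'0}f$ by \eqref{integral-twist}, using the two forms $\tilde f=\tau(\bar\chi)^{-1}\sum_u\bar\chi(u)g(\tau\pm u/p)$; the tails vanish by the preceding proposition and its proof. Then pair the $g$-period from $u/p$ to $p^k\gamma'0+u/p$ with the one from $-u/p$ to $-p^k\gamma'0-u/p$. Because $g$ has rational coefficients, the second is the complex conjugate of the first, so each pair is a real element of $L_g$ (the first lies in $L_g$ by Lemma \ref{cusp1}).

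The decisive fact is that real elements of $L_g$ lie in $\sqrt{-3}L_g$, because $L_g=\Omega\CO_K$ with $\Omega$ purely imaginary. The paper gets this from $c'=\pm1$ (Section \ref{c}), Stevens' identification of the optimal quotient, and Gross's description of the periods. With a real generator it would fail, since then $L_g\cap\mathbb{R}=\Omega\BZ\not\subseteq\sqrt{-3}L_g$. Only after this does your bookkeeping apply: \eqref{fg} turns $\tau(\bar\chi)^{-1}L_g=p^{-1}\tau(\chi)L_g$ into $p^{-1}mL_f$, and the clearing factor $p^{l_0}$ is prime to $\sqrt{-3}$. Your discussion of $\nu$ and $\ker\nu=\sqrt{-3}L$ only restates the goal, since $P_1+P_2\in\sqrt{-3}L$ is the statement itself.
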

\begin{proof}

Since 

\begin{equation}
\int_{0}^{\pm\gamma' 0}f(\tau)d\tau=\int_{0}^{\pm\gamma 0}f(\tau)d\tau+\int_{0}^{\pm l_1}f(\tau)d\tau=\int_{0}^{\pm\gamma 0}f(\tau)d\tau,
\end{equation}
Combining this with \eqref{integral-twist}, we know that
\begin{equation}
\int_{0}^{\pm\gamma 0}f(\tau)d\tau=\sum_{k=0}^{l_1-1}\frac{a_p^k}{p^k\tau(\bar\chi)}\sum_{u=1}^{p-1}\bar\chi(u)\int_{\pm\frac{u}{p}}^{\pm p^k\gamma 0\pm \frac{u}{p}}g(\tau)d\tau.
\end{equation}
So we have
\begin{eqnarray}\label{period-sum}
&&\int_{0}^{\gamma 0}f(\tau)d\tau+\int_{0}^{-\gamma 0}f(\tau)d\tau \\ 
&=&\sum_{k=0}^{l_0-1}\frac{a_p^k}{p^k\tau(\bar\chi)}\sum_{u=1}^{p-1}\bar\chi(u)\lrb{\int_{\frac{u}{p}}^{p^k\gamma' 0 \frac{u}{p}}g(\tau)d\tau+\int_{-\frac{u}{p}}^{-p^k\gamma' 0-\frac{u}{p}}g(\tau)d\tau}\notag
\end{eqnarray}
Since the Manin-Stevens constant of $E_{p^{2i}}$ is prove to be $1$ in the end of section \ref{c}, by \cite[(2.9)]{Stevens}, the optimal quotient for $J_1(Np)$ in the isogeny class of $E_{p^{2i}}$ is $E_{p^{2i}}$. From  the discussion in \cite[Section 5]{Grossbook}, we know that the period lattice $L_g$ of $g$ is generated by a pure imaginary period and any real period of $g$ belongs to $\sqrt{-3}L_g$. We get
$$2\pi i\int_{\frac{u}{p}}^{p^k\gamma' 0 \frac{u}{p}}g(\tau)d\tau+2\pi i\int_{-\frac{u}{p}}^{-p^k\gamma' 0-\frac{u}{p}}g(\tau)d\tau\in \sqrt{-3}L_g$$
since the left hand is a real number, see \cite[2.1.3]{Cremonabook}. From \eqref{period-sum}, we see that 
$$p^{l_0}\lrb{2\pi i\int_{0}^{\gamma 0}f(\tau)d\tau+2\pi i\int_{0}^{-\gamma 0}f(\tau)d\tau}\in\sqrt{-3}L.$$
which implies
$$2\pi i\int_{0}^{\gamma 0}f(\tau)d\tau+2\pi i\int_{0}^{-\gamma 0}f(\tau)d\tau\in\sqrt{-3}L$$
since $p$ is coprime to $\sqrt{-3}$.
\end{proof}

\begin{proof}[A proof of Proposition \ref{cubic} assuming $(\star)$]
Let $\gamma=\matrixx{a}{b}{c}{d}\in \Gamma_1(N)$. By \cite[2.1.3]{Cremonabook}, 
\[2\pi i\int_{i\infty}^{\frac{a}{c}}f^c(\tau)d\tau=\ov{2\pi i\int_{i\infty}^{-\frac{a}{c}}f(\tau)d\tau}.\]
So if $$2\pi i\int_{i\infty}^{-\frac{a}{c}}f(\tau)d\tau$$ acts on $K(z)$ with eigenvalue $\omega^k$, then $$2\pi i\int_{i\infty}^{\frac{a}{c}}f^c(\tau)d\tau$$ will acts on $K^c(z)$ with eigenvalue $\bar\omega^k$. By Proposition \ref{plus}, $$2\pi i\int_{i\infty}^{\frac{a}{c}}f(\tau)d\tau$$ also acts on $K(z)$ with eigenvalues
$\bar\omega^k$. Then it is easy to see that 
$$F_+(z)=K\lrb{2\pi i\int_{i\infty}^{z}f(\tau)d\tau}/K^c\lrb{2\pi i\int_{i\infty}^{z}f^c(\tau)d\tau}$$
is $\Gamma_1(N)$ invariant. As in the proof in section 8, this is enough to prove Proposition \ref{cubic}.
\end{proof}

\section{Examples}

(1) The first example is $p=7$, $\varpi=3\omega+1$, $\bar\varpi=-3\omega-2$. Both the elliptic curves $E_{\varpi}$ and $E_{\bar\varpi}$ are parametrized by $X_1(189)$. Note our notation $\varphi$ is the parametrization of $E_{\bar\varpi}$ while $\varphi^c$ is for $E_{\varpi}$.
\begin{eqnarray*}
y(q)-\frac{\bar\varpi^2}{2}&=&y^c(q)-\frac{\varpi^2}{2}\\
&=&-q^{-3} + 1 + q^3 + q^6 - q^9 - 2q^{12} + q^{15} - 3q^{18} + q^{21} + q^{24}\\
&& + 2q^{27} - q^{33} + 2q^{36} - 4q^{39} + q^{42} + 3q^{45} + O(q^{47})
\end{eqnarray*}
So in this case, $F_-(q)=1$. Let $\tau=\frac{-1}{3(\omega+5)}$, then 
$$\varphi(\tau)=\lrb{(4\omega-1)\varpi^{-\frac{2}{3}},  \frac{-9\omega-4}{2}}$$
$$\varphi^c(\tau)=\lrb{0, -\frac{\varpi}{2}}$$
It is easy to check that 
\[{\frac{-9\omega-4}{2}-\frac{\bar\varpi}{2}}=-\frac{\varpi}{2}-\frac{\varpi}{2}.\]
compatible with the fact that $F_-(q)=1$. The CM point
$$\phi\circ\varphi(\tau)-\phi^c\circ\varphi^c(\tau)=\lrb{-\frac{7}{3}\omega,\frac{7}{18}\sqrt{-3}}$$
gives non-torsion points on $E_7$.

(2)Another example is $p=13$, $\varpi=3\omega+4$, $\bar\varpi=-3\omega+1$. Both the elliptic curves $E_{\varpi}$ and $E_{\bar\varpi}$ are parametrized by $X_1(117)$. 
\begin{eqnarray*}
y(q)+\frac{\bar\varpi}{2}&=&y^c(q)+\frac{\varpi}{2}\\
&=&-q^{-3} + 2 + q^3 - 2q^6 - q^9 - 2q^{12} + 2q^{15} + 2q^{21} + 2q^{24}\\
&& - q^{27} - 4q^{36} + q^{39} + 4q^{42} - 6q^{45} + O(q^{47})
\end{eqnarray*}
So in this case, $F_+(q)=1$. Let $\tau=\frac{-1}{3(\omega+23)}$, then 
$$\varphi(\tau)=\lrb{(-2\omega-7)\varpi^{-\frac{2}{3}},  \frac{9\omega+7}{2}}$$
$$\varphi^c(\tau)=\lrb{0, \frac{\varpi}{2}}$$
It is easy to check that 
\[{\frac{9\omega+7}{2}+\frac{\bar\varpi}{2}}=\frac{\varpi}{2}+\frac{\varpi}{2}.\]
compatible with the fact that $F_+(q)=1$. The CM point
$$\phi\circ\varphi(\tau)-\phi^c\circ\varphi^c(\tau)=\lrb{-\frac{13}{3}\omega^2,\frac{65}{18}\sqrt{-3}}$$
gives non-torsion points on $E_{13}$.

(3)Finally we give a little nontrivial example $p=31$, $\varpi=6\omega+1$, $\bar\varpi=-6\omega-5$. Both the elliptic curves $E_{\varpi}$ and $E_{\bar\varpi}$ are parametrized by $X_1(279)$. 
\begin{eqnarray*}
y(q)+\frac{\bar\varpi}{2}&=&-q^{-3} + (3\omega - 4) + q^3 + (6\omega + 1)q^6 + (3\omega + 8)q^9 + (3\omega + 1)q^{12} \\
&&+ 11q^{15} + 3\omega q^{18} + (-12\omega + 20)q^{21}+O(q^{22})
\end{eqnarray*}
\begin{eqnarray*}
y^c(q)+\frac{\varpi}{2}&=&-q^{-3} + (-3\omega-1) + q^3 + (-6\omega - 5)q^6 + (-3\omega + 5)q^9 + (-3\omega-2)q^{12} \\
&&+ 11q^{15} + (-3\omega-3) q^{18} + (12\omega + 32)q^{21}+O(q^{22})
\end{eqnarray*}

\begin{eqnarray*}
\frac{y(q)+\frac{\bar\varpi}{2}}{y^c(q)+\frac{\varpi}{2}}&=&1 + (6\omega + 3)q^3 + (-15\omega - 21)q^6 + (-9\omega + 63)q^9 + (192\omega - 39)q^{12}\\
 &&+ (-750\omega - 429)q^{15} + (1458\omega + 2133)q^{18} + (90\omega - 5274)q^{21}+O(q^{22})
\end{eqnarray*}
In this case, 
\begin{eqnarray*}
F_+(q)&=&1 + (2\omega + 1)q^3 + (-5\omega - 4)q^6 + (5\omega + 10)q^9 + (\omega - 16)q^{12} \\
&&+ (-40\omega + 4)q^{15} + (109\omega + 65)q^{18} + (-165\omega - 240)q^{21}+O(q^{22}) 
\end{eqnarray*}

Let $\tau=\frac{-1}{3(\omega+26)}$, then 
$$\varphi(\tau)=\lrb{\frac{-130\omega-404}{49}\varpi^{-\frac{2}{3}},  -\frac{549}{343}\omega-\frac{2531}{686}}$$
$$\varphi^c(\tau)=\lrb{0, \frac{\varpi}{2}}$$
It  can be checked that 
\[
\lrb{-\frac{549}{343}\omega-\frac{2531}{686}+\frac{\bar\varpi}{2}}\lrb{\frac{\varpi}{2}+\frac{\varpi}{2}}^{-1}=\varpi.
\]
compatible with the fact that $F_+(\tau)\in K(\sqrt[3]{\varpi})$. The twist of CM point
$$\phi\circ\varphi(\tau)-\phi^c\circ\varphi^c(\tau)=\lrb{-\frac{217}{12}\omega,-\frac{3131}{72}\sqrt{-3}}$$
gives non-torsion points on $E_{31}$.

\bibliographystyle{alpha}
\bibliography{reference}

@book{Cox89,
	Author = {Cox, D. A.},
	Publisher = {John Wiley $\&$ Sons Inc.},
	Title = {Primes of the Form $x^2+n y^2$},
	Year = {1989}}

@article{DV17,
	Author = {Dasgupta, S. and Voight, J.},
	Journal = {Proc. Amer. Math. Soc. },
	Pages = {3257-3273 },
	Publisher = {American Mathematical Soc.},
	Title = {Sylvester's problem and mock Heegner points},
	Volume = {146},
	Year = {2018}}

@article{CST17,
	Author = {Cai, L. and Shu, J. and Tian, Y.},
	Journal = {Amer. Jour. of Math.},
	Number = {3},
	Pages = {785--816},
	Title = {Cube sum problem and an explicit {G}ross-{Z}agier formula},
	Volume = {139},
	Year = {2017}}

@article{Selmer51,
	Author = {Selmer, E. S.},
	Journal = {Acta Math.},
	Language = {fre},
	Pages = {203--362},
	Publisher = {Secr{\'e}tariat math{\'e}matique},
	Title = {The {D}iophatine equation $ax^3+by^3+cz^3=0$},
	Url = {http://eudml.org/doc/110749},
	Volume = {87},
	Year = {1951}}

@book{Silvermanbook2,
	Author = {Silverman, J. H.},
	Doi = {10.1007/978-1-4612-0851-8},
	Isbn = {0-387-94328-5},
	Mrclass = {11G05 (11G07 11G15 11G40 14H52)},
	Mrnumber = {1312368},
	Mrreviewer = {Henri Darmon},
	Pages = {xiv+525},
	Publisher = {Springer-Verlag, New York},
	Series = {Graduate Texts in Mathematics},
	Title = {Advanced topics in the arithmetic of elliptic curves},
	Url = {http://dx.doi.org/10.1007/978-1-4612-0851-8},
	Volume = {151},
	Year = {1994}}

@book{Silvermanbook1,
	Author = {Silverman, J. H.},
	Isbn = {0-387-96203-4},
	Mrclass = {11G05 (14H52)},
	Mrnumber = {1329092},
	Note = {Corrected reprint of the 1986 original},
	Pages = {xii+400},
	Publisher = {Springer-Verlag, New York},
	Series = {Graduate Texts in Mathematics},
	Title = {The arithmetic of elliptic curves},
	Volume = {106},
	Year = {1992}}

@book{Iwanbook,
	Author = {Iwaniec, Henryk},
	Doi = {10.1090/gsm/017},
	Isbn = {0-8218-0777-3},
	Mrclass = {11Fxx (11-02)},
	Mrnumber = {1474964},
	Mrreviewer = {B. Ramakrishnan},
	Pages = {xii+259},
	Publisher = {American Mathematical Society, Providence, RI},
	Series = {Graduate Studies in Mathematics},
	Title = {Topics in classical automorphic forms},
	Url = {http://dx.doi.org/10.1090/gsm/017},
	Volume = {17},
	Year = {1997}}

@book{Shimurabook,
	Author = {Shimura, G.},
	Isbn = {0-691-08092-5},
	Mrclass = {11Fxx (11-02 11G05 11G40)},
	Mrnumber = {1291394},
	Note = {Reprint of the 1971 original, Kan{\^o} Memorial Lectures, 1},
	Pages = {xiv+271},
	Publisher = {Princeton University Press, Princeton, NJ},
	Series = {Publications of the Mathematical Society of Japan},
	Title = {Introduction to the arithmetic theory of automorphic functions},
	Volume = {11},
	Year = {1994}}

@article{DV1,
	Author = {Dasgupta, S. and Voight, J.},
	Journal = {Arithmetic Geometry: Clay Mathematics Institute Summer School, Arithmetic Geometry, July 17-August 11, 2006, Mathematisches Institut, Georg-August-Universit{\"a}t, G{\"o}ttingen, Germany},
	Pages = {91},
	Publisher = {American Mathematical Soc.},
	Title = {Heegner points and Sylvester's conjecture},
	Volume = {8},
	Year = {2009}}

@article {BS,
    AUTHOR = {Birch, B. J. and Stephens, N. M.},
     TITLE = {The parity of the rank of the {M}ordell-{W}eil group},
   JOURNAL = {Topology},
  FJOURNAL = {Topology. An International Journal of Mathematics},
    VOLUME = {5},
      YEAR = {1966},
     PAGES = {295--299},
      ISSN = {0040-9383},
   MRCLASS = {14.40 (10.12)},
  MRNUMBER = {201379},
MRREVIEWER = {J. W. S. Cassels},
       DOI = {10.1016/0040-9383(66)90021-8},
       URL = {https://doi.org/10.1016/0040-9383(66)90021-8},
}

@article {HSY19,
    AUTHOR = {Hu, Y. and Shu, J. and Yin, H.},
     TITLE = {An explicit {G}ross-{Z}agier formula related to the
              {S}ylvester conjecture},
   JOURNAL = {Trans. Amer. Math. Soc.},
  FJOURNAL = {Transactions of the American Mathematical Society},
    VOLUME = {372},
      YEAR = {2019},
    NUMBER = {10},
     PAGES = {6905--6925},
      ISSN = {0002-9947},
   MRCLASS = {11G40 (11G05)},
  MRNUMBER = {4024542},
       DOI = {10.1090/tran/7760},
       URL = {https://doi.org/10.1090/tran/7760},
}

@article {Tian,
    AUTHOR = {Tian, Y.},
     TITLE = {Congruent numbers and {H}eegner points},
   JOURNAL = {Camb. J. Math.},
  FJOURNAL = {Cambridge Journal of Mathematics},
    VOLUME = {2},
      YEAR = {2014},
    NUMBER = {1},
     PAGES = {117--161},
      ISSN = {2168-0930},
   MRCLASS = {11G40 (11F11 11G05)},
  MRNUMBER = {3272014},
MRREVIEWER = {Steven Joel Miller},
       DOI = {10.4310/CJM.2014.v2.n1.a4},
       URL = {https://doi.org/10.4310/CJM.2014.v2.n1.a4},
}

@article{Sage,
	Author = {Sagemath},
	Journal={https://cloud.sagemath.com/projects},
	Title = {},
	Year={}
}

@article {SSY,
    AUTHOR = {Shu, Jie and Song, Xu and Yin, Hongbo},
     TITLE = {Cube sums of the forms {$3p$} and {$3p^2$}},
   JOURNAL = {Math. Z.},
  FJOURNAL = {Mathematische Zeitschrift},
    VOLUME = {299},
      YEAR = {2021},
    NUMBER = {3-4},
     PAGES = {2297--2325},
      ISSN = {0025-5874},
   MRCLASS = {11G05 (11G15 11G40)},
  MRNUMBER = {4329288},
MRREVIEWER = {Steven Joel Miller},
       DOI = {10.1007/s00209-021-02730-w},
       URL = {https://doi.org/10.1007/s00209-021-02730-w},
}

@article {NM1,
    AUTHOR = {Murabayashi, N.},
     TITLE = {On the field of definition for modularity of {CM} elliptic
              curves},
   JOURNAL = {J. Number Theory},
  FJOURNAL = {Journal of Number Theory},
    VOLUME = {108},
      YEAR = {2004},
    NUMBER = {2},
     PAGES = {268--286},
      ISSN = {0022-314X},
   MRCLASS = {11G15 (11G05)},
  MRNUMBER = {2098639},
MRREVIEWER = {Benjamin V. Howard},
       DOI = {10.1016/j.jnt.2004.05.004},
       URL = {https://doi.org/10.1016/j.jnt.2004.05.004},
}

@article {Weil49,
    AUTHOR = {Weil, Andr\'{e}},
     TITLE = {Numbers of solutions of equations in finite fields},
   JOURNAL = {Bull. Amer. Math. Soc.},
  FJOURNAL = {Bulletin of the American Mathematical Society},
    VOLUME = {55},
      YEAR = {1949},
     PAGES = {497--508},
      ISSN = {0002-9904},
   MRCLASS = {10.0X},
  MRNUMBER = {29393},
MRREVIEWER = {L. Carlitz},
       DOI = {10.1090/S0002-9904-1949-09219-4},
       URL = {https://doi.org/10.1090/S0002-9904-1949-09219-4},
}

@book {IRbook,
    AUTHOR = {Ireland, Kenneth F. and Rosen, Michael I.},
     TITLE = {A classical introduction to modern number theory},
    SERIES = {Graduate Texts in Mathematics},
    VOLUME = {84},
      NOTE = {Revised edition of {{\i}t Elements of number theory}},
 PUBLISHER = {Springer-Verlag, New York-Berlin},
      YEAR = {1982},
     PAGES = {xiii+341},
      ISBN = {0-387-90625-8},
   MRCLASS = {12-01 (10-01)},
  MRNUMBER = {661047},
MRREVIEWER = {Lindsay N. Childs},
}

@article {Shimura71,
    AUTHOR = {Shimura, Goro},
     TITLE = {On elliptic curves with complex multiplication as factors of
              the {J}acobians of modular function fields},
   JOURNAL = {Nagoya Math. J.},
  FJOURNAL = {Nagoya Mathematical Journal},
    VOLUME = {43},
      YEAR = {1971},
     PAGES = {199--208},
      ISSN = {0027-7630},
   MRCLASS = {10D25 (14H05)},
  MRNUMBER = {296050},
MRREVIEWER = {William F. Hammond},
       URL = {http://projecteuclid.org/euclid.nmj/1118798376},
}

@article {GL ,
    AUTHOR = {Gonz\'{a}lez, Josep and Lario, Joan-C.},
     TITLE = {Modular elliptic directions with complex multiplication (with
              an application to {G}ross's elliptic curves)},
   JOURNAL = {Comment. Math. Helv.},
  FJOURNAL = {Commentarii Mathematici Helvetici. A Journal of the Swiss
              Mathematical Society},
    VOLUME = {86},
      YEAR = {2011},
    NUMBER = {2},
     PAGES = {317--351},
      ISSN = {0010-2571},
   MRCLASS = {11G18 (11F11 11G15 11R37)},
  MRNUMBER = {2775131},
MRREVIEWER = {\'{A}lvaro Lozano-Robledo},
       DOI = {10.4171/CMH/225},
       URL = {https://doi.org/10.4171/CMH/225},
}

@book {Grossbook,
    AUTHOR = {Gross, Benedict H.},
     TITLE = {Arithmetic on elliptic curves with complex multiplication},
    SERIES = {Lecture Notes in Mathematics},
    VOLUME = {776},
      NOTE = {With an appendix by B. Mazur},
 PUBLISHER = {Springer, Berlin},
      YEAR = {1980},
     PAGES = {iii+95},
      ISBN = {3-540-09743-0},
   MRCLASS = {10D25 (14K22)},
  MRNUMBER = {563921},
MRREVIEWER = {A. N. Andrianov},
}

@article {AL,
    AUTHOR = {Atkin, A. O. L. and Li, Wen Ch'ing Winnie},
     TITLE = {Twists of newforms and pseudo-eigenvalues of {$W$}-operators},
   JOURNAL = {Invent. Math.},
  FJOURNAL = {Inventiones Mathematicae},
    VOLUME = {48},
      YEAR = {1978},
    NUMBER = {3},
     PAGES = {221--243},
      ISSN = {0020-9910},
   MRCLASS = {10D12},
  MRNUMBER = {508986},
MRREVIEWER = {R. A. Rankin},
       DOI = {10.1007/BF01390245},
       URL = {https://doi.org/10.1007/BF01390245},
}

@book {Miyake,
    AUTHOR = {Miyake, Toshitsune},
     TITLE = {Modular forms},
    SERIES = {Springer Monographs in Mathematics},
   EDITION = {English},
      NOTE = {Translated from the 1976 Japanese original by Yoshitaka Maeda},
 PUBLISHER = {Springer-Verlag, Berlin},
      YEAR = {2006},
     PAGES = {x+335},
      ISBN = {978-3-540-29592-1; 3-540-29592-5},
   MRCLASS = {11F11 (11F25 11F72)},
  MRNUMBER = {2194815},
}

@article {CW,
    AUTHOR = {Cremona, J. E. and Whitley, E.},
     TITLE = {Periods of cusp forms and elliptic curves over imaginary
              quadratic fields},
   JOURNAL = {Math. Comp.},
  FJOURNAL = {Mathematics of Computation},
    VOLUME = {62},
      YEAR = {1994},
    NUMBER = {205},
     PAGES = {407--429},
      ISSN = {0025-5718},
   MRCLASS = {11F67 (11F66 11G05 11G40)},
  MRNUMBER = {1185241},
MRREVIEWER = {Henri Darmon},
       DOI = {10.2307/2153419},
       URL = {https://doi.org/10.2307/2153419},
}

@article {Shimura73,
    AUTHOR = {Shimura, Goro},
     TITLE = {On the factors of the jacobian variety of a modular function
              field},
   JOURNAL = {J. Math. Soc. Japan},
  FJOURNAL = {Journal of the Mathematical Society of Japan},
    VOLUME = {25},
      YEAR = {1973},
     PAGES = {523--544},
      ISSN = {0025-5645},
   MRCLASS = {14H40 (10D05 12A70)},
  MRNUMBER = {318162},
MRREVIEWER = {A. N. Andrianov},
       DOI = {10.2969/jmsj/02530523},
       URL = {https://doi.org/10.2969/jmsj/02530523},
}

@article {Yin,
    AUTHOR = {Yin, Hongbo},
     TITLE = {On the {$8$} case of the {S}ylvester conjecture},
   JOURNAL = {Trans. Amer. Math. Soc.},
  FJOURNAL = {Transactions of the American Mathematical Society},
    VOLUME = {375},
      YEAR = {2022},
    NUMBER = {4},
     PAGES = {2705--2728},
      ISSN = {0002-9947},
   MRCLASS = {11G05},
  MRNUMBER = {4391731},
MRREVIEWER = {Jeremy A. Rouse},
       DOI = {10.1090/tran/8556},
       URL = {https://doi.org/10.1090/tran/8556},
}

@article {Stevens,
    AUTHOR = {Stevens, Glenn},
     TITLE = {Stickelberger elements and modular parametrizations of
              elliptic curves},
   JOURNAL = {Invent. Math.},
  FJOURNAL = {Inventiones Mathematicae},
    VOLUME = {98},
      YEAR = {1989},
    NUMBER = {1},
     PAGES = {75--106},
      ISSN = {0020-9910},
   MRCLASS = {11G05 (11F11)},
  MRNUMBER = {1010156},
MRREVIEWER = {Sheldon Kamienny},
       DOI = {10.1007/BF01388845},
       URL = {https://doi.org/10.1007/BF01388845},
}

@article {Yin1,
    AUTHOR = {Shu, Jie and Yin, Hongbo},
     TITLE = {Cube sums of the forms {$3p$} and {$3p^2$} {II}},
   JOURNAL = {Math. Ann.},
  FJOURNAL = {Mathematische Annalen},
    VOLUME = {385},
      YEAR = {2023},
    NUMBER = {3-4},
     PAGES = {1037--1060},
      ISSN = {0025-5831},
   MRCLASS = {11G05 (11G15 11G40)},
  MRNUMBER = {4566692},
MRREVIEWER = {Abhishek Juyal},
       DOI = {10.1007/s00208-022-02370-3},
       URL = {https://doi.org/10.1007/s00208-022-02370-3},
}

@incollection {Elkies,
    AUTHOR = {Elkies, Noam D.},
     TITLE = {Heegner point computations},
 BOOKTITLE = {Algorithmic number theory ({I}thaca, {NY}, 1994)},
    SERIES = {Lecture Notes in Comput. Sci.},
    VOLUME = {877},
     PAGES = {122--133},
 PUBLISHER = {Springer, Berlin},
      YEAR = {1994},
   MRCLASS = {11G05 (11Y35)},
  MRNUMBER = {1322717},
MRREVIEWER = {Fernando Rodr\'{\i}guez Villegas},
       DOI = {10.1007/3-540-58691-1\_49},
       URL = {https://doi.org/10.1007/3-540-58691-1_49},
}

@article {GL01,
    AUTHOR = {Gonz\'alez, Josep and Lario, Joan-C.},
     TITLE = {{$\bold Q$}-curves and their {M}anin ideals},
   JOURNAL = {Amer. J. Math.},
  FJOURNAL = {American Journal of Mathematics},
    VOLUME = {123},
      YEAR = {2001},
    NUMBER = {3},
     PAGES = {475--503},
      ISSN = {0002-9327,1080-6377},
   MRCLASS = {11G05 (11G18)},
  MRNUMBER = {1833149},
MRREVIEWER = {Ivica\ Gusi\'c},
       DOI = {10.1353/ajm.2001.0015},
       URL = {https://doi.org/10.1353/ajm.2001.0015},
}

@article {ABS,
    AUTHOR = {Alpöge, Levent and Bhargava, Manjul and Shnidman, Ari},
     TITLE = {Integers expressible as the sum of two rational cubes},
   JOURNAL = {arXiv:2210.10730v2},
  FJOURNAL = {},
    VOLUME = {with an appendix by Ashay Burungale and Christopher Skinner},
      YEAR = {},
    NUMBER = {},
     PAGES = {},
      ISSN = {},
   MRCLASS = {},
  MRNUMBER = {},
MRREVIEWER = {},
       DOI = {},
       URL = {},
}

@article {MS,
    AUTHOR = {Majumdar, Dipramit and Shingavekar, Pratiksha},
     TITLE = {Cube sum problem for integers having exactly two distinct
              prime factors},
   JOURNAL = {Proc. Indian Acad. Sci. Math. Sci.},
  FJOURNAL = {Indian Academy of Sciences. Proceedings. Mathematical
              Sciences},
    VOLUME = {133},
      YEAR = {2023},
    NUMBER = {2},
     PAGES = {Paper No. 43, 22},
      ISSN = {0253-4142},
   MRCLASS = {11G05 (11D25)},
  MRNUMBER = {4672790},
       DOI = {10.1007/s12044-023-00757-z},
       URL = {https://doi.org/10.1007/s12044-023-00757-z},
}

@article {MS1,
    AUTHOR = {Majumdar, Dipramit and Sury, B.},
     TITLE = {Cyclic cubic extensions of {$\Bbb{Q}$}},
   JOURNAL = {Int. J. Number Theory},
  FJOURNAL = {International Journal of Number Theory},
    VOLUME = {18},
      YEAR = {2022},
    NUMBER = {9},
     PAGES = {1929--1955},
      ISSN = {1793-0421},
   MRCLASS = {11R16 (11D25 11G05 12F05)},
  MRNUMBER = {4454460},
MRREVIEWER = {Abdelkader Zekhnini},
       DOI = {10.1142/S1793042122500993},
       URL = {https://doi.org/10.1142/S1793042122500993},
}

@article {Reyna,
    AUTHOR = {Arias de Reyna, Juan},
     TITLE = {Sylvester’s arithmetic problem},
   JOURNAL = {https://institucional.us.es/blogimus/en/2020/03/sylvesters-arithmetic-problem/},
  FJOURNAL = {},
    VOLUME = {},
      YEAR = {},
    NUMBER = {},
     PAGES = {},
      ISSN = {},
   MRCLASS = {},
  MRNUMBER = {},
MRREVIEWER = {},
       DOI = {},
       URL = {},
}

@article {Cremona,
    AUTHOR = {Cremona, J. E.},
     TITLE = {Modular symbols for {$\Gamma_1(N)$} and elliptic curves with
              everywhere good reduction},
   JOURNAL = {Math. Proc. Cambridge Philos. Soc.},
  FJOURNAL = {Mathematical Proceedings of the Cambridge Philosophical
              Society},
    VOLUME = {111},
      YEAR = {1992},
    NUMBER = {2},
     PAGES = {199--218},
      ISSN = {0305-0041},
   MRCLASS = {11F67 (11G05)},
  MRNUMBER = {1142740},
MRREVIEWER = {Philippe Satg\'{e}},
       DOI = {10.1017/S0305004100075307},
       URL = {https://doi.org/10.1017/S0305004100075307},
}

@article {Li75,
    AUTHOR = {Li, Wen Ch'ing Winnie},
     TITLE = {Newforms and functional equations},
   JOURNAL = {Math. Ann.},
  FJOURNAL = {Mathematische Annalen},
    VOLUME = {212},
      YEAR = {1975},
     PAGES = {285--315},
      ISSN = {0025-5831},
   MRCLASS = {10D05},
  MRNUMBER = {369263},
MRREVIEWER = {R. A. Rankin},
       DOI = {10.1007/BF01344466},
       URL = {https://doi.org/10.1007/BF01344466},
}

@article {MTT,
    AUTHOR = {Mazur, B. and Tate, J. and Teitelbaum, J.},
     TITLE = {On {$p$}-adic analogues of the conjectures of {B}irch and
              {S}winnerton-{D}yer},
   JOURNAL = {Invent. Math.},
  FJOURNAL = {Inventiones Mathematicae},
    VOLUME = {84},
      YEAR = {1986},
    NUMBER = {1},
     PAGES = {1--48},
      ISSN = {0020-9910},
   MRCLASS = {11G40 (11G05 14G10)},
  MRNUMBER = {830037},
MRREVIEWER = {Kenneth A. Ribet},
       DOI = {10.1007/BF01388731},
       URL = {https://doi.org/10.1007/BF01388731},
}

@book {Cremonabook,
    AUTHOR = {Cremona, J. E.},
     TITLE = {Algorithms for modular elliptic curves},
   EDITION = {Second},
 PUBLISHER = {Cambridge University Press, Cambridge},
      YEAR = {1997},
     PAGES = {vi+376},
      ISBN = {0-521-59820-6},
   MRCLASS = {11G05 (11F11 11F25 11G40 14G25)},
  MRNUMBER = {1628193},
MRREVIEWER = {Joseph H. Silverman},
}

@article {HRS,
    AUTHOR = {Heninger, Nadia and Rains, E. M. and Sloane, N. J. A.},
     TITLE = {On the integrality of {$n$}th roots of generating functions},
   JOURNAL = {J. Combin. Theory Ser. A},
  FJOURNAL = {Journal of Combinatorial Theory. Series A},
    VOLUME = {113},
      YEAR = {2006},
    NUMBER = {8},
     PAGES = {1732--1745},
      ISSN = {0097-3165,1096-0899},
   MRCLASS = {94B25 (05A15 11F27 11H71)},
  MRNUMBER = {2269551},
MRREVIEWER = {Simon\ N.\ Litsyn},
       DOI = {10.1016/j.jcta.2006.03.018},
       URL = {https://doi.org/10.1016/j.jcta.2006.03.018},
}

@article {CDT,
    AUTHOR = {Calegari, Frank and Dimitrov, Vesselin and Tang, Yunqing},
     TITLE = {The unbounded denominators conjecture},
   JOURNAL = {J. Amer. Math. Soc.},
  FJOURNAL = {Journal of the American Mathematical Society},
    VOLUME = {38},
      YEAR = {2025},
    NUMBER = {3},
     PAGES = {627--702},
      ISSN = {0894-0347,1088-6834},
   MRCLASS = {11F11},
  MRNUMBER = {4890656},
       DOI = {10.1090/jams/1053},
       URL = {https://doi.org/10.1090/jams/1053},
}

@article {Glenn,
    AUTHOR = {Stevens, Glenn},
     TITLE = {The cuspidal group and special values of {$L$}-functions},
   JOURNAL = {Trans. Amer. Math. Soc.},
  FJOURNAL = {Transactions of the American Mathematical Society},
    VOLUME = {291},
      YEAR = {1985},
    NUMBER = {2},
     PAGES = {519--550},
      ISSN = {0002-9947,1088-6850},
   MRCLASS = {11G16 (11F11 11G30 11G40 14G10)},
  MRNUMBER = {800251},
MRREVIEWER = {Kenneth\ A.\ Ribet},
       DOI = {10.2307/2000098},
       URL = {https://doi.org/10.2307/2000098},
}

@article {ARS,
    AUTHOR = {Agashe, Amod and Ribet, Kenneth and Stein, William A.},
     TITLE = {The {M}anin constant},
   JOURNAL = {Pure Appl. Math. Q.},
  FJOURNAL = {Pure and Applied Mathematics Quarterly},
    VOLUME = {2},
      YEAR = {2006},
    NUMBER = {2},
     PAGES = {617--636},
      ISSN = {1558-8599,1558-8602},
   MRCLASS = {11G40 (11F11 11G10)},
  MRNUMBER = {2251484},
MRREVIEWER = {Chandan\ Singh\ Dalawat},
       DOI = {10.4310/PAMQ.2006.v2.n2.a11},
       URL = {https://doi.org/10.4310/PAMQ.2006.v2.n2.a11},
}

@article {BF,
    AUTHOR = {Burungale, Ashay and Flach, Matthias},
     TITLE = {The conjecture of {B}irch and {S}winnerton-{D}yer for certain
              elliptic curves with complex multiplication},
   JOURNAL = {Camb. J. Math.},
  FJOURNAL = {Cambridge Journal of Mathematics},
    VOLUME = {12},
      YEAR = {2024},
    NUMBER = {2},
     PAGES = {357--415},
      ISSN = {2168-0930,2168-0949},
   MRCLASS = {11G40 (11G15 11R23)},
  MRNUMBER = {4779675},
       DOI = {10.4310/cjm.2024.v12.n2.a2},
       URL = {https://doi.org/10.4310/cjm.2024.v12.n2.a2},
}

@article{Ce,
	Author = {Cesnavicius, K.},
	Journal = {arXiv:1604.02165v1},
	Title = {The {M}anin-{S}tevens constant in the semistable case}
	}

@article {Ce1,
    AUTHOR = {Cesnavicius, K.},
     TITLE = {The {M}anin constant in the semistable case},
   JOURNAL = {Compos. Math.},
  FJOURNAL = {Compositio Mathematica},
    VOLUME = {154},
      YEAR = {2018},
    NUMBER = {9},
     PAGES = {1889--1920},
      ISSN = {0010-437X,1570-5846},
   MRCLASS = {11G05 (11G10 11G18 14G35)},
  MRNUMBER = {3867287},
MRREVIEWER = {Pete\ L.\ Clark},
       DOI = {10.1112/s0010437x18007273},
       URL = {https://doi.org/10.1112/s0010437x18007273},
}

@incollection {Edix,
    AUTHOR = {Edixhoven, Bas},
     TITLE = {On the {M}anin constants of modular elliptic curves},
 BOOKTITLE = {Arithmetic algebraic geometry ({T}exel, 1989)},
    SERIES = {Progr. Math.},
    VOLUME = {89},
     PAGES = {25--39},
 PUBLISHER = {Birkh\"{a}user Boston, Boston, MA},
      YEAR = {1991},
      ISBN = {0-8176-3513-0},
   MRCLASS = {11G05 (14G25)},
  MRNUMBER = {1085254},
MRREVIEWER = {Philippe\ Satg\'{e}},
       DOI = {10.1007/978-1-4612-0457-2\{_}3}

@article{Con,
	Author = {Conrad, B.},
	Journal = {https://math.stanford.edu/~conrad/papers/minimalmodel.pdf},
	Title = {Minimal models for elliptic curves}
	}

@book {KZ,
    AUTHOR = {Katz, Nicholas M. and Mazur, Barry},
     TITLE = {Arithmetic moduli of elliptic curves},
    SERIES = {Annals of Mathematics Studies},
    VOLUME = {108},
 PUBLISHER = {Princeton University Press, Princeton, NJ},
      YEAR = {1985},
     PAGES = {xiv+514},
      ISBN = {0-691-08349-5; 0-691-08352-5},
   MRCLASS = {11G05 (11F11 14G25 14K15)},
  MRNUMBER = {772569},
MRREVIEWER = {Kenneth\ A.\ Ribet},
       DOI = {10.1515/9781400881710},
       URL = {https://doi.org/10.1515/9781400881710},
}

@inproceedings {Katz,
    AUTHOR = {Katz, Nicholas M.},
     TITLE = {{$p$}-adic properties of modular schemes and modular forms},
 BOOKTITLE = {Modular functions of one variable, {III} ({P}roc. {I}nternat.
              {S}ummer {S}chool, {U}niv. {A}ntwerp, {A}ntwerp, 1972)},
    SERIES = {Lecture Notes in Math., Vol. 350},
     PAGES = {69--190},
 PUBLISHER = {Springer, Berlin-New York},
      YEAR = {1973},
   MRCLASS = {10D15 (14D20)},
  MRNUMBER = {447119},
MRREVIEWER = {V.\ V.\ Shokurov},
}

@inproceedings {DR,
    AUTHOR = {Deligne, P. and Rapoport, M.},
     TITLE = {Les sch\'{e}mas de modules de courbes elliptiques},
 BOOKTITLE = {Modular functions of one variable, {II} ({P}roc. {I}nternat.
              {S}ummer {S}chool, {U}niv. {A}ntwerp, {A}ntwerp, 1972)},
    SERIES = {Lecture Notes in Math., Vol. 349},
     PAGES = {143--316},
 PUBLISHER = {Springer, Berlin-New York},
      YEAR = {1973},
   MRCLASS = {14K10 (10D05)},
  MRNUMBER = {337993},
MRREVIEWER = {T.\ Oda},
}

@article {Hooley,
    AUTHOR = {Hooley, Christopher},
     TITLE = {On {A}rtin's conjecture},
   JOURNAL = {J. Reine Angew. Math.},
  FJOURNAL = {Journal f\"{u}r die Reine und Angewandte Mathematik. [Crelle's
              Journal]},
    VOLUME = {225},
      YEAR = {1967},
     PAGES = {209--220},
      ISSN = {0075-4102,1435-5345},
   MRCLASS = {10.66 (10.41)},
  MRNUMBER = {207630},
MRREVIEWER = {Werner\ G. H. Schaal},
       DOI = {10.1515/crll.1967.225.209},
       URL = {https://doi.org/10.1515/crll.1967.225.209},
}

@article {HB,
    AUTHOR = {Heath-Brown, D. R.},
     TITLE = {Artin's conjecture for primitive roots},
   JOURNAL = {Quart. J. Math. Oxford Ser. (2)},
  FJOURNAL = {The Quarterly Journal of Mathematics. Oxford. Second Series},
    VOLUME = {37},
      YEAR = {1986},
    NUMBER = {145},
     PAGES = {27--38},
      ISSN = {0033-5606,1464-3847},
   MRCLASS = {11A07 (11N13 11N35)},
  MRNUMBER = {830627},
MRREVIEWER = {D.\ J.\ Lewis},
       DOI = {10.1093/qmath/37.1.27},
       URL = {https://doi.org/10.1093/qmath/37.1.27},
}

@article {GM,
    AUTHOR = {Gupta, Rajiv and Murty, M. Ram},
     TITLE = {A remark on {A}rtin's conjecture},
   JOURNAL = {Invent. Math.},
  FJOURNAL = {Inventiones Mathematicae},
    VOLUME = {78},
      YEAR = {1984},
    NUMBER = {1},
     PAGES = {127--130},
      ISSN = {0020-9910,1432-1297},
   MRCLASS = {11A07},
  MRNUMBER = {762358},
MRREVIEWER = {Herbert\ Walum},
       DOI = {10.1007/BF01388719},
       URL = {https://doi.org/10.1007/BF01388719},
}

@article {Moree,
    AUTHOR = {Moree, Pieter},
     TITLE = {Artin's primitive root conjecture---a survey},
   JOURNAL = {Integers},
  FJOURNAL = {Integers},
    VOLUME = {12},
      YEAR = {2012},
    NUMBER = {6},
     PAGES = {1305--1416},
      ISSN = {1867-0652,1867-0660},
   MRCLASS = {11N37 (11A07 11B05)},
  MRNUMBER = {3011564},
       DOI = {10.1515/integers-2012-0043},
       URL = {https://doi.org/10.1515/integers-2012-0043},
}

@article {Lenstra,
    AUTHOR = {Lenstra, Jr., H. W.},
     TITLE = {On {A}rtin's conjecture and {E}uclid's algorithm in global
              fields},
   JOURNAL = {Invent. Math.},
  FJOURNAL = {Inventiones Mathematicae},
    VOLUME = {42},
      YEAR = {1977},
     PAGES = {201--224},
      ISSN = {0020-9910,1432-1297},
   MRCLASS = {12A05 (12A75)},
  MRNUMBER = {480413},
MRREVIEWER = {John\ V.\ Armitage},
       DOI = {10.1007/BF01389788},
       URL = {https://doi.org/10.1007/BF01389788},
}

@article {Moree1,
    AUTHOR = {Moree, Pieter},
     TITLE = {On primes in arithmetic progression having a prescribed
              primitive root. {II}},
   JOURNAL = {Funct. Approx. Comment. Math.},
  FJOURNAL = {Uniwersytet im. Adama Mickiewicza w Poznaniu. Wydzia\l
              Matematyki i Informatyki. Functiones et Approximatio
              Commentarii Mathematici},
    VOLUME = {39},
      YEAR = {2008},
    NUMBER = {part 1},
     PAGES = {133--144},
      ISSN = {0208-6573,2080-9433},
      ISBN = {978-83-232-1955-2},
   MRCLASS = {11N69 (11N13 11N36 11N56)},
  MRNUMBER = {2490093},
MRREVIEWER = {Francesco\ Pappalardi},
       DOI = {10.7169/facm/1229696559},
       URL = {https://doi.org/10.7169/facm/1229696559},
}

@article {Woh,
    AUTHOR = {Wohlfahrt, Klaus},
     TITLE = {An extension of {F}. {K}lein's level concept},
   JOURNAL = {Illinois J. Math.},
  FJOURNAL = {Illinois Journal of Mathematics},
    VOLUME = {8},
      YEAR = {1964},
     PAGES = {529--535},
      ISSN = {0019-2082},
   MRCLASS = {10.21 (20.65)},
  MRNUMBER = {167533},
MRREVIEWER = {J.\ Lehner},
       URL = {http://projecteuclid.org/euclid.ijm/1256059574},
}

@article{Monsky,
	Author = {Monsky, P.},
	Journal = {arXiv:2309.00162v1},
	Title = {The sum of two cubes problem—an approach that’s classroom friendly}
	}

@article {Conrad07,
    AUTHOR = {Conrad, Brian},
     TITLE = {Arithmetic moduli of generalized elliptic curves},
   JOURNAL = {J. Inst. Math. Jussieu},
  FJOURNAL = {Journal of the Institute of Mathematics of Jussieu. JIMJ.
              Journal de l'Institut de Math\'{e}matiques de Jussieu},
    VOLUME = {6},
      YEAR = {2007},
    NUMBER = {2},
     PAGES = {209--278},
      ISSN = {1474-7480,1475-3030},
   MRCLASS = {11G18 (11F25 14G22)},
  MRNUMBER = {2311664},
MRREVIEWER = {Mihran\ Papikian},
       DOI = {10.1017/S1474748006000089},
       URL = {https://doi.org/10.1017/S1474748006000089},
}
\end{document}